\setlist{font=\normalfont\upshape}
\theoremstyle{plain}
\newtheorem{thm}{Theorem}[section]
\newtheorem{prop}[thm]{Proposition}
\newtheorem{lem}[thm]{Lemma}
\newtheorem{cor}[thm]{Corollary}
\newtheorem*{blankit}{Theorem}
\theoremstyle{definition}
\newtheorem{rem}[thm]{Remark}
\def\longrightleftarrows{\mathrel{
	\mathop{\vcenter{
	\offinterlineskip
	\hbox to 0.6truecm{\rightarrowfill}%
	\hbox to 0.6truecm{\leftarrowfill}}}%
	}}
\newcommand{\wtilde}[2][3]{%
	{}\mkern#1mu\widetilde{\mkern-#1mu#2}}
\newcommand{\what}[2][3]{%
	{}\mkern#1mu\widehat{\mkern-#1mu#2}}
\newdimen\ex@
\def\nointerlineskip{\prevdepth-\@m\p@}
\def\@projlim{%
		\mathop{\vtop{\ialign{##\crcr
		\hfil\rm lim\hfil\crcr\noalign{\nointerlineskip}\leftarrowfill\crcr
		\noalign{\nointerlineskip\kern-\ex@}\crcr}}}
}
\def\@injlim{%
		\mathop{\vtop{\ialign{##\crcr
		\hfil\rm lim\hfil\crcr\noalign{\nointerlineskip}\rightarrowfill\crcr
		\noalign{\nointerlineskip\kern-\ex@}\crcr}}}
}
\def\varprojlim{\mathop{\@projlim}}
\def\varinjlim{\mathop{\@injlim}}
\def\subsection{\@startsection{subsection}{2}%
 \z@{.5\linespacing\@plus.7\linespacing}{-.5em}%
 {\bfseries\mathversion{bold}}}
\let\cal=\mathcal
\def\N{\mathbb{N}}
\def\Z{\mathbb{Z}}
\def\Q{\mathbb{Q}}
\def\R{\mathbb{R}}
\def\id{\operatorname{id}}
\def\ker{\operatorname{Ker}}
\def\coker{\operatorname{Coker}}
\let\im=\Im\relax
\def\Hom{\operatorname{Hom}}
\def\End{\operatorname{End}}
\def\Ext{\operatorname{Ext}}
\def\Cogen{\operatorname{Cogen}}
\def\Rej{\operatorname{Rej}}
\def\rMod{\textup{Mod}\mathchar`-}
\def\lMod{\mathchar`-\textup{Mod}}
\def\rmod{\textup{mod}\mathchar`-}
\def\lmod{\mathchar`-\textup{mod}}
\def\De{\mathcal{D}}
\def\Add{\operatorname{Add}}
\def\chb{{\textstyle\boldsymbol{\cdot}}}
\let\rho\varrho\let\Gamma\varGamma\let\Delta\varDelta
\let\Lambda\varLambda\let\Sigma\varSigma\relax
\title{Cotilting duality for artinian rings}%
\author[F.~Mantese]{Francesca Mantese}
\address[Francesca Mantese]{%
Dipartimento di Informatica -- Settore di Matematica\\
Universit\`a di Verona\\
Strada le Grazie 15 -- Ca' Vignal, I-37134 Verona, Italy}
\author[L.~Martini]{Lorenzo Martini}
\email{francesca.mantese@univr.it}
\email{lorenzo.martini@univr.it}
\subjclass[2010]{16D90,16G10,18G80}
\keywords{Morita duality, artinian ring, derived duality, cotilting bimodule, product complete}
\begin{document}\fontsize{10.5pt}{13.5pt}\selectfont%
\begin{abstract}
A classical result due to Morita and Azumaya establishes that given two arbitrary rings, any duality between their finitely generated modules is representable by a faithfully balanced bimodule which is a finitely generated injective cogenerator of both rings and, equivalently, these latter are one-sided artinian. We extend this well-known result to the case of a cotilting bimodule, by analysing the duality it represents in the bounded derived categories of the given rings.
\end{abstract}
\maketitle

\section{Introduction}%
It is well known that, in contrast to (additive) equivalences, no duality can affect the whole module categories $S\lMod$ and $\rMod R$ over two given rings. Nonetheless, given two subcategories ${}_S\mathcal{M}$ and $\mathcal{M}_R$, the somehow reasonable conditions ${}_SS\in{}_S\mathcal{M}$ and $R_R\in\mathcal{M}_R$ for a duality $F:{}_S\mathcal{M}\rightleftarrows\mathcal{M}_R:G$ are enough to make it representable by a bimodule ${}_SU_R$; that is, there are natural isomorphisms $F\cong\Delta_S$ and $G\cong\Delta_R$, where each $\Delta$ is the contravariant Hom functor of $U$. In particular, the modules involved in such a duality are $U$-reflexive, meaning that for all $M\in\mathcal{M}$ the evaluation maps
\begin{align*}
	\omega_M\colon M &\longrightarrow \Delta^{2}(M) \cr 
	x &\longmapsto
		\begin{aligned}[t]
			\wtilde x\colon \Delta(M) &\to U \cr
			f &\mapsto f(x)
		\end{aligned}
\end{align*}
are isomorphisms. Classical Morita theory of duality characterises the representable dualities among thick subcategories of modules (i.e.\ subcategories with the three-out-of-two condition on short exact sequences). In details, in a duality as above the involved subcategories are thick if, and only if, the representing bimodule is faithfully balanced and an injective cogenerator on both sides. These bimodules are called Morita bimodules. In general, the structure of the rings admitting a Morita bimodule $U$, yet that of $U$-reflexive modules, is not well understood. Nonetheless, a crucial result by Azumaya and Morita characterises the one-sided artinian rings admitting a Morita bimodule $U$ as the rings over which the $U$-reflexive modules are precisely the finitely generated ones. We gather this fact in the following statements.

\begin{thm}[{\cite[Theorem~6]{Azu59}}]
Let $R$ be a right artinian ring, $U_R$ a finitely generated injective cogenerator of $\rMod R$, and $S=\End_R(U)$. Then:%
\label{t:Azumaya1}%
\begin{enumerate}[(i)]
\item ${}_SU_R$ is a Morita bimodule;
\item $S$ is a left artinian ring;
\item $\Delta_R$ induces a duality between the finitely generated right $R\mathchar`-$ and left $S$-modules.
\end{enumerate}
\end{thm}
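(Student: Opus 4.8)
The plan is to reduce everything to finite-length modules, realize $\Delta_R=\Hom_R(-,U)$ as an exact duality onto its image, deduce (ii) from a socle computation, then prove the injectivity of ${}_SU$ (the crux), and finally obtain (i) and (iii) by symmetry. First, since $R$ is right artinian, every module in $\rmod R$ has finite length, and so does $U_R$; being a finitely generated cogenerator, $U_R$ provides a copresentation $0\to M\to U^{a}\to U^{b}$ with finite $a,b$ for each $M\in\rmod R$. As $U_R$ is injective, $\Delta_R$ is exact and $\Delta_R(U^{a})\cong{}_SS^{a}$, so applying $\Delta_R$ to the copresentation presents $\Delta_R(M)$ as the cokernel of a map of finitely generated free left $S$-modules; hence $\Delta_R$ carries $\rmod R$ into $S\lmod$. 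Moreover $\omega_U=\id_U$ under the identification $\Hom_S(S,U)=U$, so $\omega_{U^{n}}$ is an isomorphism for all $n$; feeding the copresentation into the naturality square of $\omega$ and using that $\Delta_S$ is left exact, the five lemma shows $\omega_M$ is an isomorphism for every $M\in\rmod R$. Thus $\Delta_R$ and $\Delta_S$ restrict to mutually quasi-inverse exact dualities between $\rmod R$ and the full subcategory $\mathcal{C}\leqdef\Delta_R(\rmod R)\subseteq S\lmod$.

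Next I would prove (ii). Since $U_R$ is injective, every $R$-endomorphism of $\operatorname{soc}(U)$ extends along $\operatorname{soc}(U)\hookrightarrow U$ to an endomorphism of $U$, so the restriction ring homomorphism $S=\End_R(U)\to\End_R(\operatorname{soc} U)$ is surjective, and its target is semisimple artinian because $\operatorname{soc}(U)$ is semisimple of finite length. For a simple $V\in\rmod R$ one has $\Delta_R(V)=\Hom_R(V,\operatorname{soc} U)$ with the $S$-action factoring through the above surjection, and this module is simple over $\End_R(\operatorname{soc} U)$; hence $\Delta_R(V)$ is a \emph{simple} left $S$-module. Taking a composition series of $U_R$ and applying the exact functor $\Delta_R$ exhibits ${}_SS=\Delta_R(U)$ as a finite iterated extension of such simple modules, so $\ell({}_SS)<\infty$ and $S$ is left artinian. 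Since moreover every simple left $S$-module is a composition factor of ${}_SS$, the assignment $V\mapsto\Delta_R(V)$ is a bijection from the simple right $R$-modules to the simple left $S$-modules, and consequently $\ell_S(\Delta_R M)=\ell_R(M)$ for all $M\in\rmod R$.

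The main obstacle will be showing ${}_SU$ is injective. Using the identification ${}_SU\cong\Delta_R(R_R)$, the duality $\Delta_R\colon\rmod R\to\mathcal{C}$ together with $\End_R(R_R)\cong R$ shows that ${}_SU_R$ is faithfully balanced. For injectivity I would verify Baer's criterion. As $S$ is now known to be left artinian, a left ideal is finitely generated, say $I=S\varphi_1+\dots+S\varphi_k$ with $\varphi_i\in S=\End_R(U)$, and an $S$-linear $f\colon I\to{}_SU$ extends to $S$ if and only if there is $u\in U$ with $\varphi_i(u)=f(\varphi_i)$ for all $i$, that is, if and only if $(f(\varphi_i))_{i}$ lies in the image of $\Phi=(\varphi_1,\dots,\varphi_k)\colon U\to U^{k}$. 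Because $U_R$ is a cogenerator, $\im\Phi$ is exactly the intersection of the kernels of all $g\colon U^{k}\to U$ with $g\Phi=0$; writing such a $g$ as $(g_1,\dots,g_k)$ with $\sum_i g_i\varphi_i=0$ in $S$, the $S$-linearity of $f$ yields $\sum_i g_i(f(\varphi_i))=f\bigl(\sum_i g_i\varphi_i\bigr)=0$, so $(f(\varphi_i))_i\in\im\Phi$ and $f$ extends. Granting this, ${}_SU$ being a cogenerator is formal: every simple left $S$-module is some $\Delta_R(V)$, and applying $\Delta_R$ to an epimorphism $R_R\twoheadrightarrow V$ embeds $\Delta_R(V)$ into $\Delta_R(R_R)={}_SU$, so every simple — hence every indecomposable injective — left $S$-module embeds in the injective module ${}_SU$. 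Since in addition ${}_SU\cong\Delta_R(R_R)$ is finitely generated, ${}_SU_R$ is a faithfully balanced bimodule that is a finitely generated injective cogenerator on both sides, which is (i). Finally, for (iii) I would repeat the argument of the first paragraph with the pair $(S,{}_SU)$ in place of $(R,U_R)$ — now legitimate, $S$ being left artinian and ${}_SU$ a finitely generated injective cogenerator with $\End_S({}_SU)\cong\End_R(U_R)$-balanced data — obtaining $\Delta_S\colon S\lmod\to\rmod R$ with $\omega'_N$ an isomorphism for every $N$, so that $\Delta_R$ and $\Delta_S$ are mutually quasi-inverse dualities between $\rmod R$ and $S\lmod$.
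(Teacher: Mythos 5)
The paper does not prove this statement at all: it is quoted verbatim from Azumaya (\cite[Theorem~6]{Azu59}) as classical background, so there is no in-text argument to compare against. Your proposal is, as far as I can check, a correct self-contained reconstruction along the classical Morita--Azumaya lines (the argument one finds in Anderson--Fuller or Xue): finite-length modules admit copresentations $0\to M\to U^a\to U^b$ with finite exponents, exactness of $\Delta_R$ plus $\omega_{U^n}$ being an isomorphism and a four/five-lemma chase give $U$-reflexivity of $\rmod R$ and land $\Delta_R(\rmod R)$ in $S\lmod$; the socle computation correctly shows $\Delta_R$ sends simples to simples and hence $\ell({}_SS)=\ell(U_R)<\infty$, which is (ii); faithful balance is exactly the statement that $\omega_{R_R}$ is bijective, which you already have (it is worth one sentence to note that under $\Hom_R(R,U)\cong U$ the map $\omega_{R_R}$ \emph{is} the canonical map $r\mapsto(u\mapsto ur)$, so no separate identification of the abstract isomorphism $\End_S(\Delta_R R)\cong\End_R(R)$ with the bimodule map is needed); and the Baer-criterion argument for injectivity of ${}_SU$ --- reduce to finitely generated left ideals via (ii), then use that $\Rej_U(U^k/\im\Phi)=0$ because $U_R$ is a cogenerator --- is the standard double-annihilator trick and is sound. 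The cogenerator claim for ${}_SU$ is also fine, since an injective module containing a copy of every simple contains the injective envelope of every simple and is therefore a cogenerator over any ring, and (iii) then follows by running the first paragraph symmetrically, which is legitimate once (i) and (ii) are in place. The only cosmetic blemishes are the unnecessary claim that $\Delta_S$ restricted to $\mathcal{C}=\Delta_R(\rmod R)$ is an ``exact'' duality (not needed anywhere) and the phrase ``bijection of simple modules'' where ``bijection of isomorphism classes'' is meant; neither affects correctness.
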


\begin{thm}[{\cite[Theorem~10]{Azu59}}]
Let $R$ and $S$ be two rings, and $\rmod R$ resp.\ $S\lmod$ be the subcategories of finitely generated right $R\mathchar`-$ resp.\ left $S$-modules. Given a duality $F: S\lmod\rightleftarrows\rmod R:G$, then:%
\label{t:Azumaya2}%
\begin{enumerate}[(i)]
\item $F$ and $G$ are both representable by a Morita bimodule ${}_SU_R$ which is finitely generated on both sides;
\item $R$ is right artinian and $S$ is left artinian.
\end{enumerate}
\end{thm}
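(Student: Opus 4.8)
The plan is to represent the duality by a bimodule ${}_SU_R$, show it is finitely generated and faithfully balanced on both sides, deduce that $R$ (hence also $S$) is noetherian — the crucial point — then that $U_R$ is a finitely generated injective cogenerator of $\rMod R$ and that $R$ is right artinian, so that Theorem~\ref{t:Azumaya1} applies and supplies the remaining assertions about $S$. \emph{(Step 1, representability.)} Since ${}_SS\in S\lmod$ and $R_R\in\rmod R$, the duality $(F,G)$ is representable as recalled in the Introduction: with $U\leqdef F({}_SS)$, functoriality of $F$ makes ${}_SU_R$ a bimodule, there are natural isomorphisms $F\cong\Delta_S$ and $G\cong\Delta_R$, and every module in $S\lmod$ and in $\rmod R$ is $U$-reflexive. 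In particular $U_R=F({}_SS)$ and ${}_SU\cong G(R_R)=\Hom_R(R_R,U)$ are finitely generated, so $U$ is finitely generated on both sides, and the reflexivity of ${}_SS$ and of $R_R$ says precisely that ${}_SU_R$ is faithfully balanced; from now on I identify $S=\End_R(U_R)$.

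\emph{(Step 2, noetherianity.)} Given $f\colon M\to N$ in $\rmod R$, the module $C\leqdef\coker\Delta_R(f)$ is a quotient of $\Delta_R(M)\cong G(M)\in S\lmod$, hence lies in $S\lmod$; applying the left exact contravariant functor $\Delta_S$ to the exact sequence $\Delta_R(N)\to\Delta_R(M)\to C\to 0$ gives exactness of $0\to\Delta_S(C)\to\Delta_S\Delta_R(M)\to\Delta_S\Delta_R(N)$, and the naturality of $\omega$ together with the reflexivity of $M$ and $N$ identifies $\Delta_S(C)$ with $\ker f$. As $\Delta_S(C)=F(C)$ lies in $\rmod R$, the submodule $\ker f$ of $M$ is finitely generated; taking $f$ to be $M\to M/K$ shows that every submodule of a finitely generated right $R$-module is finitely generated, so $R$ is right noetherian, and the symmetric argument through $G$ shows $S$ is left noetherian. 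Hence $\rmod R$ and $S\lmod$ are abelian and the duality $(\Delta_R,\Delta_S)$ between them is exact; in particular it preserves composition length and interchanges ascending with descending chains of submodules. I expect this step to be the main obstacle: a priori the categories $\rmod R$ and $S\lmod$ need not be abelian, so exactness of the duality is not available from the start, and the argument rests entirely on extracting noetherianity from the bare existence of $F$, $G$ and the evaluation maps.

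\emph{(Step 3, $U$ an injective cogenerator.)} As $R$ is right noetherian, by Baer's criterion injectivity of $U_R$ follows once $\Hom_R(R,U)\to\Hom_R(I,U)$ is surjective for every right ideal $I$, which holds by applying the exact $\Delta_R$ to $0\to I\to R\to R/I\to 0$ — a sequence in $\rmod R$ since $I$ is finitely generated. For the cogenerator property, given $0\ne m\in M$ with $M\in\rMod R$, the cyclic submodule $mR$ is $U$-reflexive, so injectivity of $\omega_{mR}$ yields $g\in\Hom_R(mR,U)$ with $g(m)\ne 0$, which extends to $M$ by injectivity of $U_R$; thus $\Hom_R(M,U)$ separates the points of $M$, and $M$ embeds into a product of copies of $U_R$.

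\emph{(Step 4, conclusion.)} The module ${}_SU$ is finitely generated over the left noetherian ring $S$, hence noetherian, while $R_R$ is noetherian and $\Delta_R$ is an exact duality, so ${}_SU=\Delta_R(R_R)$ is artinian; being both, it has finite length, and therefore so does $R_R\cong\Delta_S({}_SU)$, i.e.\ $R$ is right artinian. Now $U_R$ is a finitely generated injective cogenerator of $\rMod R$ with $\End_R(U)=S$, so Theorem~\ref{t:Azumaya1} applies and gives that ${}_SU_R$ is a Morita bimodule, that $S$ is left artinian, and that $\Delta_R$ realises the duality between $\rmod R$ and $S\lmod$ — in accordance with $G\cong\Delta_R$ and $F\cong\Delta_S$ from Step 1. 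Together with the finite generation from Step 1 this yields (i), and together with the right artinianity of $R$ it yields (ii).
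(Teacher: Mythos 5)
Theorem~\ref{t:Azumaya2} is stated in the paper as a classical result with a citation to Azumaya; the paper offers no proof of its own, so there is nothing internal to compare your argument against. On its own merits your proof is correct and follows the classical route. Step~1 is the standard representability argument ($U=F({}_SS)$, reflexivity of $R_R$ and ${}_SS$ giving faithful balance and finite generation on both sides). Step~2 is indeed the crux, and your argument is sound: for $f\colon M\to N$ in $\rmod R$ the cokernel $C$ of $\Delta_R(f)$ lies in $S\lmod$, left exactness of $\Delta_S$ plus naturality of the evaluation maps identifies $\ker f$ with $\Delta_S(C)=F(C)\in\rmod R$, and applying this to $M\to M/K$ gives right noetherianity of $R$ (symmetrically, left noetherianity of $S$); only after this do $\rmod R$ and $S\lmod$ become abelian and the duality exact. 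Steps~3 and~4 (Baer's criterion on finitely generated ideals, separation of points via reflexivity of cyclic submodules, and the lattice anti-isomorphism forcing ${}_SU=\Delta_R(R_R)$ to be both noetherian and artinian, hence $R_R\cong\Delta_S({}_SU)$ of finite length) are all correct, and the reduction to Theorem~\ref{t:Azumaya1} cleanly delivers the remaining assertions. No gaps.
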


Notice that, \emph{a priori}, $\rmod R$ and $S\lmod$ are not thick subcategories; they turn out to be so since the rings are proven to be one-sided artinian.

Referring the reader to the monograph \cite{Xue92} for an exhaustive treatment of Morita duality, we explicitly state the following consequences to the previous theorems, which we will be particularly interested in.

\begin{cor}
Let $R$ be a right artinian ring. Then $R$ has a Morita duality if and only if it admits a finitely generated injective cogenerator.%
\label{c:Azumaya1}
\end{cor}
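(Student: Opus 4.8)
The plan is to deduce Corollary \ref{c:Azumaya1} directly from Theorems \ref{t:Azumaya1} and \ref{t:Azumaya2}, once the meaning of ``$R$ has a Morita duality'' has been fixed: I read it as the existence of a ring $S$ together with a duality $F\colon S\lmod\rightleftarrows\rmod R\colon G$, which, as recalled in the introduction and in \cite{Xue92}, is equivalent to the existence of a duality between thick subcategories of modules containing the regular modules, equivalently of a Morita bimodule ${}_SU_R$. Since $R$ is right artinian, hence right noetherian, $\rmod R$ is itself a thick subcategory, so these formulations really do agree for the ring at hand; making this explicit is the only preliminary step.

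For the forward implication, assume $R$ admits a finitely generated injective cogenerator $U_R$ and set $S=\End_R(U)$. Then Theorem \ref{t:Azumaya1} applies verbatim: part~(iii) says that $\Delta_R=\Hom_R(-,U)$ restricts to a duality between $\rmod R$ and $S\lmod$, and parts~(i)--(ii) show that this duality is represented by the Morita bimodule ${}_SU_R$, with $S$ left artinian. Hence $R$ has a Morita duality, with nothing further to check.

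For the converse, assume $R$ has a Morita duality, presented as a duality $F\colon S\lmod\rightleftarrows\rmod R\colon G$ for some ring $S$. By Theorem \ref{t:Azumaya2}(i), $F$ and $G$ are representable by a Morita bimodule ${}_SU_R$ that is finitely generated on both sides; in particular $U_R$ is finitely generated as a right $R$-module and, being a Morita bimodule, is an injective cogenerator of $\rMod R$. Thus $R$ admits a finitely generated injective cogenerator, as wanted. (The hypothesis that $R$ be right artinian is not used in this direction --- by Theorem \ref{t:Azumaya2}(ii) it is automatic --- and is needed only in order to invoke Theorem \ref{t:Azumaya1} for the forward implication.)

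I expect the only genuinely delicate point to be the preliminary reconciliation of the notions of Morita duality in the first paragraph. If one insists on starting from an abstract Morita bimodule ${}_SU_R$ --- a duality between thick subcategories merely containing the regular modules --- then the extra work is to check that it restricts to a duality between $\rmod R$ and $S\lmod$. That $\Delta_R$ carries $\rmod R$ into the finite-length left $S$-modules follows by induction on the length, since $\Delta_R$ is exact, sends a simple module to a simple module, and over the right artinian ring $R$ finitely generated coincides with finite length; the matching statement on the $S$-side, namely that $S$ is left artinian so that finitely generated again coincides with finite length there, belongs to the classical theory of Morita duality (see \cite{Xue92}) and is, a posteriori, contained in the conclusion of Theorem \ref{t:Azumaya2}. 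Taking the finitely-generated formulation as the definition, none of this is needed and the proof reduces to quoting the two theorems.
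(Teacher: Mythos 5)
Your proposal is correct and follows exactly the derivation the paper intends: the corollary is stated there as an immediate consequence of Theorems~\ref{t:Azumaya1} and~\ref{t:Azumaya2}, with the forward implication from the former and the converse from the latter, just as you argue. Your preliminary reconciliation of the two formulations of ``Morita duality'' is a sensible clarification but does not change the substance of the argument.
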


\begin{cor}
Let $R$ be a right artinian ring. Given a Morita bimodule ${}_SU_R$, the $U$-reflexive modules are the finitely generated ones.%
\label{c:Azumaya2}%
\end{cor}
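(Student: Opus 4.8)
The plan is to establish the two inclusions separately; the first is formal and the second carries all the weight. Since ${}_SU_R$ is a Morita bimodule, the functors $\Delta_R$ and $\Delta_S$ are exact ($U$ being injective on both sides) and every evaluation map $\omega$ is a monomorphism ($U$ being a cogenerator on the pertinent side); feeding a short exact sequence into the natural transformation $\id\Rightarrow\Delta_S\Delta_R$ and invoking the snake lemma then shows that, in such a sequence, the middle term is $U$-reflexive if and only if both outer terms are. In particular the class $\mathcal{R}_R$ of $U$-reflexive right $R$-modules is closed under submodules, quotient modules, extensions and finite direct sums. As ${}_SU_R$ is faithfully balanced, $\omega_{R_R}$ is the canonical isomorphism $R\xrightarrow{\ \sim\ }\End({}_SU)$, so $R_R\in\mathcal{R}_R$, and hence $\mathcal{R}_R$ contains every finitely generated right $R$-module.

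For the converse, let $M\in\mathcal{R}_R$. Since $R$ is right artinian its radical $J$ is nilpotent, say $J^{\,n}=0$, and $M/MJ$ is reflexive (a quotient of $M$) and semisimple over $R/J$. A Nakayama-type argument based on $J^{\,n}=0$ shows that $M$ is finitely generated once $M/MJ$ is, so we may assume $M$ itself is semisimple. Then $M$ is a direct sum of simple modules, of which $R$ has only finitely many up to isomorphism; were this sum infinite, some simple $T$ would occur with infinite multiplicity and $M$ would contain a copy of $T^{(\N)}$, which is reflexive by closure under submodules. Thus the statement reduces to the claim that $T^{(\N)}$ is \emph{not} $U$-reflexive for any simple $T_R$ — and this is the step I expect to be the main obstacle.

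To prove the claim, observe that $\Hom_R(-,U)$ sends coproducts to products, so $\Delta_R(T^{(\N)})\cong\prod_{\N}W$ with $W:=\Delta_R(T)=\Hom_R(T,U)\neq 0$ (because $U$ is a cogenerator). As $T$ is finitely generated it lies in $\mathcal{R}_R$, so $\omega_T\colon T\xrightarrow{\ \sim\ }\Delta_S(W)=\Hom_S(W,U)$; naturality of $\omega$ then identifies the image of $\omega_{T^{(\N)}}\colon T^{(\N)}\to\Hom_S(\prod_{\N}W,U)$ with $\bigoplus_{\N}\Hom_S(W,U)$, embedded through the projections $\prod_{\N}W\to W$ — that is, with the $S$-linear maps $\prod_{\N}W\to U$ factoring through a finite sub-product. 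This inclusion is proper: the quotient $\prod_{\N}W/\bigoplus_{\N}W$ is nonzero, so — ${}_SU$ being a cogenerator of $S\lMod$ — it maps nontrivially to $U$, and composing with the canonical surjection yields an $S$-linear map $\prod_{\N}W\to U$ that vanishes on $\bigoplus_{\N}W$ but is not zero, hence factors through no finite sub-product. Therefore $\omega_{T^{(\N)}}$ is not surjective, $T^{(\N)}$ is not reflexive, and the proof is complete. It is the two-sidedness of the Morita bimodule — concretely, the cogenerator property of ${}_SU$ as a \emph{left} module — that drives this last step; note that applying the corollary to $U_R$, which is $U$-reflexive since ${}_SU_R$ is faithfully balanced, recovers in particular that $U_R$ is finitely generated.
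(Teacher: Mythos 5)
Your argument is correct, but it is worth noting that the paper does not prove this statement at all: Corollary~\ref{c:Azumaya2} is recorded as a classical consequence of Azumaya's Theorems~\ref{t:Azumaya1} and~\ref{t:Azumaya2} (via \cite{Azu59} and \cite{Xue92}), so you have supplied a self-contained proof where the paper supplies a citation. Your route is the standard one in spirit but nicely streamlined. The inclusion $\rmod R\subseteq\mathcal{R}_R$ is exactly the usual thickness argument (exactness of both $\Delta$'s plus injectivity of every $\omega_M$ gives the short exact sequence of cokernels, whence closure under sub, quotient and extension, and $R_R$ is reflexive by faithful balance). For the converse, the reduction via $J$ nilpotent to the semisimple case, and then to showing that $T^{(\N)}$ is not reflexive for a simple $T$, is clean; and your computation that the image of $\omega_{T^{(\N)}}$ inside $\Hom_S(\prod_\N W,U)$ consists precisely of the maps factoring through a finite sub-product, together with the cogenerator property of ${}_SU$ applied to $\prod_\N W/\bigoplus_\N W\neq0$, is a correct and pleasantly elementary substitute for the linear-compactness machinery that usually carries this step in Azumaya--Morita theory. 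Two small remarks: first, it is genuinely the \emph{left} cogenerator property of $U$ that you use at the decisive moment, which is the right thing to highlight; second, the statement is most naturally read (in parallel with Corollary~\ref{c:CotiltingBimoduleOneSidedArtinian}) as concerning right $R$-modules, which is what you prove --- the left-hand analogue requires first knowing that $S$ is left artinian, which is part of Theorem~\ref{t:Azumaya1} and would then follow by the symmetric argument. Your closing observation that the result applied to $U_R$ itself recovers the finite generation of $U_R$ is a nice sanity check consistent with Theorem~\ref{t:Azumaya1}.
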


Morita theory has been generalised in a very broad sense, mostly since the advent of tilting theory in the eighties---nowadays playing a prominent role in distinguished categorical contexts, such as derived and triangulated categories (see e.g.\ \cite{MV18,NSZ18}). From a module-theoretic point of view, the definition of Morita bimodule---formerly, that of injective cogenerator---is generalised by the notion of $1$-cotilting module (namely, of a module which is injective with respect to the class of modules it cogenerates); in particular, a \emph{cotilting bimodule} is defined as a faithfully balanced bimodule which is $1$-cotiling on both sides. We refer the reader to subsection~\ref{SS:CotiltingBimodules} to any detail on cotiling (bi)modules. It is then natural to ask how cotilting bimodules shall generalise Morita bimodules, either by looking at their reflexive modules and understanding whether they represent some duality. In this direction, a vital role is played by the fact that any $1$-cotilting module cogenerates a non-trivial torsion pair, unlike any injective cogenerator. For instance, the Cotilting theorem (see e.g.\ \cite[Theorem~6]{Col98}) ensures that a cotilting bimodule ${}_SU_R$ yields two pairs of dualities, via its contravariant $\Hom$ and $\Ext^1$ functors, among suitable subcategories of the kernel of these functors, which indeed form two torsion pairs. Comparing this with Morita duality, one remarkable feature that is missing is a representability criterion; still, the classes involved in both dualities are far from being classified. In fact, representability results extending Morita theory have been proven to be more likely over derived module categories, still by generalised tilting theory, namely by means of distinguished complexes. For instance, in \cite{Miy98} a notion of cotilting bimodule complex is introduced, and proved to representing---over one-sided coherent rings---derived counterparts of Morita dualities. However, no remarkable feature of Azumaya's Morita theory seems to be decodable by such a context.

In this paper, we exploit some crucial results achieved in \cite{MT10,MT14} concerning dualities in derived categories of modules; we will prove that a cotilting bimodule induces a duality, termed \emph{cotilting duality}, for which an analogue of the aforementioned Azumaya's theorems on Morita duality holds, when finitely generated modules are replaced by bounded complexes with finitely generated cohomology. In particular, on the one hand we will get a criterion for a one-sided artinian ring to admit a cotilting duality; on the other hand, we will prove a representability theorem, that reflects the necessity of both the rings to be one-sided artinian, in order to their bounded complexes with finitely generated cohomology being dual to each other.

Let us discuss more in detail the background and our results. Recall that, for any ring $R$, $\De^b(R)$ will denote the bounded derived category of $\rMod R$. Given a cotilting bimodule ${}_SU_R$, we call \emph{cotilting duality} the duality induced by the right adjunction
\[
	\mathbf{R}\Delta_S:\De^b(S^{\rm op})\longrightleftarrows\De^b(R):\mathbf{R}\Delta_R
\]
between the right derived functors of $\Hom_S(-,U)$ and $\Hom_R(-,U)$ (following \cite[Example~4.6]{MT10}, this setup encompasses the dualities induced by the $\Hom$ and $\Ext^1$ functors as described by the Cotilting theorem). In other words, a cotilting duality is the duality among the bounded complexes $M^\chb$ whose counit maps $\what\omega_M\colon M^\chb\to\mathbf{R}\Delta\circ\mathbf{R}\Delta(M^\chb)\cong\mathbf{L}\Delta^{2}(M^\chb)$ are quasi-isomorphisms. We will refer to these complexes as the \emph{$\De$-reflexive} complexes, and denote by $\De_R$ and $\De_{S^{\rm op}}$ their relevant classes. These classes are triangulated subcategories of $\De^b(R)$ and $\De^b(S^{\rm op})$, respectively. A module is called $\De$-reflexive in case it is $\De$-reflexive as a stalk complex; in \cite[Theorem~3.10]{MT10} it is proved that the subcategories $\mathcal{M}_R$ and ${}_S\mathcal{M}$ of $\De$-reflexive modules are exact abelian subcategories of $\rMod R$ and $S\lMod$, respectively. In \cite[Corollary~3.6]{MT10}, it is proved that a complex is $\De$-reflexive if and only if its cohomology modules are $\De$-reflexive. Consequently, in the notation we set there are equivalences of triangulated categories
\[
	\De_R^{\vphantom b}\cong\De^b_{\mathcal{M}_R}(R)\quad\hbox{and}\quad
	\De_{S^{\rm op}}^{\vphantom b}\cong \De^b_{{}_S\mathcal{M}}(S^{\rm op}),
\]
where $\smash[b]{\De^b_{\mathcal{M}_R}(R)}$ denotes the bounded derived category of complexes with cohomology in $\mathcal{M}_R$.

To motivate our study of cotilting dualities as in the context of Morita theory for artinian rings, let us recall two results from \cite{MT14}. In \cite[Example~1.3]{MT14} it is exhibited a two-sided artinian ring which does not admit a Morita bimodule but admits a cotilting bimodule. In particular, cotiling bimodules strictly generalise Morita bimodules even over artinian rings. In \cite[Corollary~3.7]{MT14} it is proved that if $R$ is right artinian and ${}_SU_R$ is a cotilting bimodule, then the $\De$-reflexive right $R$-modules coincide with the finitely generated ones, i.e.\ $\mathcal{M}_R=\rmod R$. This expresses how Corollary~\ref{c:Azumaya2} extend to cotilting dualities.

This said, we have to face at once a substantial lack of symmetry with respect to the classical case: if $R$ is right artinian and ${}_SU_R$ is a cotilting bimodule, then neither $S$ needs to be left artinian (see Remark~\ref{r:CotiltingBimodulesOverArtinian}) nor the $\De$-reflexive left $S$-modules have to be finitely generated. We prove in Theorem~\ref{t:CotiltingBimoduleBothArtinianRings} that the occurance of one condition is in fact equivalent to the occurance of the other. Consequently, if $R$ is right artinian and $S$ is left artinian, any cotilting duality between $R$ and $S$ should appear in the shape
\[
	\mathbf{R}\Delta_S:\De^b_{S\lmod}(S^\textup{op})\longrightleftarrows
	\De^b_{\rmod R}(R):\mathbf{R}\Delta_R.
\]
Altogether, we need to understand when a right artinian ring admits a cotilting duality. In our first main result, we classify these rings still by means of a finiteness condition on their cotilting modules, namely their product completeness, getting the following incarnation of Corollary~\ref{c:Azumaya1}.

\begin{blankit}[Corollary~\ref{c:ExistenceCotiltingDuality}]
A right artinian ring $R$ has a cotilting duality if and only it admits a finitely generated product complete cotilting module.
\end{blankit}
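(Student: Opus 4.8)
The plan is to first make precise what ``$R$ has a cotilting duality'' means, reduce it to a condition on $S=\End_R(U)$, and then prove the two implications. Since $R$ is right artinian, \cite[Corollary~3.7]{MT14} gives $\mathcal{M}_R=\rmod R$, hence $\De_R\cong\De^b_{\rmod R}(R)$; so for \emph{any} cotilting bimodule ${}_SU_R$ the induced cotilting duality already has the shape $\mathbf{R}\Delta_S\colon\De_{S^{\rm op}}\rightleftarrows\De^b_{\rmod R}(R)\colon\mathbf{R}\Delta_R$. Thus ``$R$ has a cotilting duality'' is to be read as: there is a cotilting bimodule ${}_SU_R$ for which moreover $\De_{S^{\rm op}}=\De^b_{S\lmod}(S^{\rm op})$, i.e.\ ${}_S\mathcal{M}=S\lmod$, which by Theorem~\ref{t:CotiltingBimoduleBothArtinianRings} is equivalent to $S$ being left artinian. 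I would also record the free inclusion $S\lmod\subseteq{}_S\mathcal{M}$ valid for \emph{every} cotilting bimodule: ${}_SS$ is $\De$-reflexive because $\mathbf{R}\Delta_S({}_SS)\cong U_R$ is a stalk complex (${}_SS$ projective) and $\mathbf{R}\Delta_R(U_R)\cong{}_SS$ is a stalk complex ($\injdim U_R\le1$ and $\Ext^1_R(U,U)=0$ as $U_R$ is cotilting), and by \cite[Theorem~3.10]{MT10} the subcategory ${}_S\mathcal{M}$ is exact abelian, hence closed under finite direct sums and cokernels, so it contains every finitely generated left $S$-module. Consequently the claim reduces to: \emph{$R$ admits a cotilting bimodule ${}_SU_R$ with $S$ left artinian if and only if $R$ admits a finitely generated product complete cotilting module.}

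\emph{Necessity.} Let ${}_SU_R$ be a cotilting bimodule with $S$ left artinian. By definition $U_R$ is $1$-cotilting. It is finitely generated: ${}_SS\in S\lmod={}_S\mathcal{M}$, so $U_R\cong\mathbf{R}\Delta_S({}_SS)$ is a $\De$-reflexive stalk complex, hence lies in $\De_R\cong\De^b_{\rmod R}(R)$, which forces $U_R\in\rmod R$. It is product complete: since $R_R$ is finitely generated it is $\De$-reflexive (again $\mathcal{M}_R=\rmod R$), so ${}_SU\cong\mathbf{R}\Delta_R(R_R)$ is a $\De$-reflexive stalk complex and therefore lies in $\De_{S^{\rm op}}\cong\De^b_{S\lmod}(S^{\rm op})$ (using ${}_S\mathcal{M}=S\lmod$); thus ${}_SU$ is finitely generated over $S$, hence of finite length as $S$ is left artinian, i.e.\ $U_R$ has finite length over $\End_R(U_R)=S$, i.e.\ $U_R$ is endofinite --- and endofinite modules are product complete.

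\emph{Sufficiency.} Conversely, let $C_R$ be a finitely generated product complete cotilting module and put $S=\End_R(C)$. One first checks that ${}_SC_R$ is a cotilting bimodule --- faithfully balanced and $1$-cotilting also on the left --- which is where finite generation and product completeness of $C$ enter (cf.\ Subsection~\ref{SS:CotiltingBimodules}). By the reduction it then suffices to prove $S$ left artinian, i.e.\ ${}_SS$ artinian. Choosing an epimorphism $R^n\twoheadrightarrow C$ and applying $\Hom_R(-,C)$ yields a monomorphism of left $S$-modules ${}_SS=\Hom_R(C,C)\hookrightarrow\Hom_R(R^n,C)\cong{}_SC^n$, so it is enough that ${}_SC$ be artinian over $S$. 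Since $C$ is finitely generated over the right artinian ring $R$ it has finite length, so $S$ is already semiperfect; and since $C$ is product complete one shows that $C$ is endofinite, i.e.\ ${}_SC$ has finite length over $S$ --- whence ${}_SS$, and thus $S$, is left artinian. Combined with the reduction, $R$ has a cotilting duality, and the two implications together give the corollary.

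\emph{The main obstacle.} The substance, in both directions, is the equivalence ``${}_SU_R$ a cotilting bimodule over right artinian $R$ with $U_R$ product complete $\iff$ $S$ left artinian'', and inside it the translation between the combinatorial notion of product completeness (a statement about $\Add$ versus $\operatorname{Prod}$, equivalently about $\Sigma$-pure-injectivity) and finiteness over $S$. For necessity this is handled cleanly by the $\De$-reflexivity of $R_R$, as above. For sufficiency one must establish that a finitely generated product complete module over a right artinian ring is endofinite (so that ${}_SC$ is artinian over $S$), and separately that ${}_SC_R$ is a genuine cotilting bimodule; these are the two points where I expect the argument to require the most care, and they presumably constitute the core of the theorem to which this corollary is attached.
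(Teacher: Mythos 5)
There is a genuine gap, and it starts with your reading of the statement. In this paper ``$R$ has a cotilting duality'' means only that there exists a cotilting bimodule ${}_SU_R$; the induced duality is the one between the $\De$-reflexive complexes, whatever they are, and there is \emph{no} requirement that ${}_S\mathcal{M}=S\lmod$, i.e.\ no requirement that $S$ be left artinian. Your reduction of the corollary to ``$R$ admits a cotilting bimodule ${}_SU_R$ with $S$ left artinian'' therefore replaces the statement by a strictly stronger one, and the paper itself shows that this stronger reading is untenable: $\Lambda=\bigl(\!\begin{smallmatrix}\R&0\\ \R&\Q\end{smallmatrix}\!\bigr)$ is right artinian, ${}_\Lambda\Lambda_\Lambda$ is a cotilting bimodule with $\Lambda_\Lambda$ finitely generated and product complete, yet $\Lambda$ is not even left noetherian (Remark~\ref{r:CotiltingBimodulesOverArtinian}). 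The same example refutes the step your sufficiency argument hinges on, namely ``since $C$ is product complete one shows that $C$ is endofinite'': product completeness does \emph{not} imply endofiniteness (Remark~\ref{r:FinitenessConditions}: endofinite $=$ $\Sigma$-pure-injective $+$ noetherian over the endomorphism ring), and $\Lambda_\Lambda$ is a finitely generated product complete module that is not endofinite. So you cannot conclude that $S$ is left artinian, and it need not be.

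With the intended definition, the corollary is the conjunction of two results whose content your proposal largely bypasses. For necessity, one must show that \emph{every} cotilting bimodule ${}_SU_R$ over a right artinian $R$ has $U_R$ finitely generated (Corollary~\ref{c:CotiltingBimoduleOneSidedArtinian}) and product complete (Proposition~\ref{p:ProductComplete}); the latter is proved by establishing that $S$ is semiprimary (hence right perfect) and that ${}_SU$ is finitely presented, via the Reiten--Ringel condition and the Hill lemma (Lemma~\ref{l:ReitenRingel}), not via endofiniteness --- your argument only handles the special case where $S$ happens to be left artinian. For sufficiency, the whole content is Theorem~\ref{t:ExistenceCotiltingDuality}: that ${}_SC_R$ is faithfully balanced and that ${}_SC$ is again $1$-cotilting, which you acknowledge but do not prove (it occupies Steps one through four of Section~\ref{S:OneSidedArtinianRing}); once that is done there is nothing further to verify, since the existence of the cotilting bimodule \emph{is} the cotilting duality. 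As a smaller point, your claim $S\lmod\subseteq{}_S\mathcal{M}$ via closure under cokernels only yields the finitely \emph{presented} modules, which differ from the finitely generated ones precisely when $S$ fails to be left noetherian --- the case at issue.
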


Notice that the previous is an effective extension of Morita theory for artinian rings to cotiling dualities. Indeed, the aforementioned one-sided artinian ring provides an instance of a ring without a Morita duality and with a cotilting duality.

Our second main result is the following representability theorem, which embodies Theorems~\ref{t:Azumaya1} and~\ref{t:Azumaya2}.

\begin{blankit}[Theorem~\ref{t:Representability}]
Let $F:S\lMod\rightleftarrows\rMod R:G$ be two additive contravariant functors such that
$$
	\mathbf{R}F:\De^b_{S\lmod}(S^{\rm op})\longrightleftarrows\De^b_{\rmod R}(R):\mathbf{R}G
$$
is a duality. Assume that:%
\begin{enumerate}[(1)]
\item the cohomological dimension of $F$ and $G$ is at most~$1$;
\item $F$ and $G$ send projectives into $G$-acyclic and $F$-acyclic objects, respectively.
\end{enumerate}
Then, the following hold:
\begin{enumerate}[(i)]
\item $\mathbf{R}F$ and $\mathbf{R}G$ are both representable by a cotilting bimodule ${}_SU_R$ which is finitely generated on both sides;

\item $R$ is right artinian and $S$ is left artinian.
\end{enumerate}
\end{blankit}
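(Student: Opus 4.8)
The plan is to recover the bimodule from the abstract duality and then bootstrap finiteness and artinianity from the hypotheses on cohomological dimension and acyclicity. First I would restrict the derived duality to stalk complexes and analyze its zeroth cohomology: for $M\in S\lmod$, hypothesis~(2) guarantees that applying $\mathbf{R}F$ to a projective presentation computes $H^0$ as $FM$ and higher cohomology vanishes above degree~$1$ by~(1), so $\mathbf{R}F(M)$ is a two-term complex concentrated in degrees $0,1$. The composite $\mathbf{R}G\mathbf{R}F\cong\id$ together with this degree bound forces, via the standard truncation/long-exact-sequence argument, that $F$ and $G$ carry finitely generated modules to finitely generated modules, that $R^1F$ and $R^1G$ are the genuine derived functors, and (crucially) that $F$ restricted to $S\lmod$ is left exact with $R^1F$ its first derived functor. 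Then I would run the usual Eilenberg–Watts / Morita-type recognition: since $F\colon S\lMod\to\rMod R$ is additive and (on f.g.\ modules) continuous enough, set $U\leqdef F(S)\cong\Hom_S(S,U)$, which is a right $R$-module with a left $S$-action by functoriality, hence a bimodule ${}_SU_R$; arguing symmetrically with $G$ and $R_R$ produces the same $U$ up to the natural isomorphisms $F\cong\Delta_S$, $G\cong\Delta_R$ on finitely generated modules, and the duality $\mathbf{R}F\dashv\mathbf{R}G$ identifies $\mathbf{R}F\cong\mathbf{R}\Delta_S$, $\mathbf{R}G\cong\mathbf{R}\Delta_R$.

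Next I would prove ${}_SU_R$ is a cotilting bimodule. Faithful balancedness: evaluation gives ring maps $S\to\End_R(U)$ and $R\to\End_{S^{\rm op}}(U)^{\rm op}$, and these are isomorphisms because $\omega_S$ and $\omega_R$ are the zeroth-cohomology components of the counit isomorphism $\what\omega$ on the stalk complexes $S$ and $R$ (which are $\De$-reflexive since they sit in the domains of the derived duality). Injective dimension at most one on each side: for any $M\in\rmod R$ we have $M$ in the image of $G$ applied to something in $S\lmod$ (surjectivity on objects of the duality up to iso, combined with $\mathbf{R}F(\mathbf{R}G)\cong\id$), so $\Ext^i_R(M,U)=H^i\mathbf{R}\Delta_R(M)=0$ for $i\ge 2$ by the degree bound; passing from f.g.\ to arbitrary modules uses that $\injdim U_R\le 1$ is detected on f.g.\ modules over a ring we are about to show is right artinian—so I would be careful about the order here and instead first establish that $U_R$ is finitely generated (it equals $F(S)$, an object of $\rmod R$) and then that $\Ext^2_R(-,U)$ vanishes on $\rmod R$. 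The cogenerator/torsion-pair axioms for $1$-cotilting then follow from the Cotilting theorem machinery recalled in the introduction together with the fact that the subcategory of $\De$-reflexive modules $\mathcal M_R$ is an exact abelian subcategory containing all f.g.\ modules.

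For part~(ii), the artinianity, I would argue as follows. Since $U_R$ is finitely generated and $\injdim U_R\le 1$, and since every f.g.\ right $R$-module is $\De$-reflexive (it lies in $\De^b_{\rmod R}(R)=\De_R$, the domain of the duality, and is thus $\De$-reflexive by \cite[Corollary~3.6]{MT10} applied to its stalk), the bimodule ${}_SU_R$ is a cotilting bimodule for which $\mathcal M_R=\rmod R$. Now I invoke the characterization direction already available in the literature/earlier parts: a ring over which all finitely generated modules are $\De$-reflexive with respect to a cotilting bimodule is right artinian—this is essentially the content of \cite[Corollary~3.7]{MT14} read in reverse, combined with the observation that $\mathcal M_R=\rmod R$ implies $\rmod R$ is closed under submodules and quotients of its f.g.\ members in a way that forces the ascending and descending chain conditions (concretely, $U_R$ being a f.g.\ injective cogenerator of the exact abelian category $\mathcal M_R=\rmod R$ forces $\rmod R$ to be a length category, hence $R$ right artinian by the classical Morita/Azumaya argument, Theorem~\ref{t:Azumaya1} applied within this subcategory). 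The symmetric argument on the left, using $G$ and the left module $R_R\mapsto{}_SU={}_S\Delta_R(R)$, gives $S$ left artinian; once both rings are one-sided artinian, \cite[Corollary~3.7]{MT14} on each side upgrades $\mathcal M_R=\rmod R$ and ${}_S\mathcal M=S\lmod$, making the displayed shape of the duality consistent and completing~(ii).

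The main obstacle I anticipate is the bootstrapping order in the previous paragraph: establishing injective dimension bounds and the cogenerator property for $U$ \emph{before} knowing $R$ is right artinian, since many of the convenient facts ($\mathcal M_R=\rmod R$, reflexivity of all f.g.\ modules) are stated in \cite{MT14} under a right-artinian hypothesis. The resolution is to extract from the abstract duality alone—using only the cohomological dimension bound~(1) and acyclicity~(2)—enough to see that $U_R=F({}_SS)$ is a f.g.\ module with $\Ext^{\ge 2}_R(N,U)=0$ for every $N\in\rmod R$ and that $\Delta_R$ is exact on short exact sequences of f.g.\ modules landing in $\mathcal M_R$; this is precisely where hypotheses~(1) and~(2) do the work, replacing the a~priori artinianity. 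Once that finitary cotilting picture is in place over $\rmod R$, the classical Azumaya argument (Theorem~\ref{t:Azumaya1}) applies verbatim to the exact abelian subcategory to deliver right artinianity, and everything else follows formally.
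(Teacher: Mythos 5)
Your overall architecture for part (i) matches the paper's: recover $U_R=\mathbf{R}F({}_SS)$ as a stalk complex (acyclicity of projectives), identify $\mathbf{R}F\cong\mathbf{R}\Delta_S$ and $\mathbf{R}G\cong\mathbf{R}\Delta_R$ via the adjunction, and read off faithful balancedness from the counit being an isomorphism on the regular stalks; hypothesis~(1) gives $\injdim U\le1$ on both sides directly (the cohomological dimension bound is for $F,G$ on all of $S\lMod$ and $\rMod R$, so your worry about passing from f.g.\ to arbitrary modules is moot here), and $\Gamma(U^\alpha)=0$ because $U^\alpha=\Delta_S(S^{(\alpha)})$ is $\Delta_R$-acyclic. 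Up to there you have a \emph{partial} cotilting bimodule. The first genuine gap is the step from partial cotilting to cotilting: one must prove ${}^{\bot_0}U\cap{}^{\bot_1}U=0$ in $\rMod R$ and in $S\lMod$, for arbitrary (not finitely generated) modules. Your appeal to ``the Cotilting theorem machinery'' is circular, since that theorem presupposes a cotilting bimodule, and the exactness properties of $\mathcal{M}_R$ do not touch modules outside $\rmod R$. The paper's proof of this point is the real content of part (ii): for $0\ne M\in{}^{\bot_0}U\cap{}^{\bot_1}U$ with f.g.\ submodule $M'$ and quotient $M''$, one shows $\Delta(M')\cong\Gamma(M'')$, deduces that $\Ext^1_S(\Gamma(M''),U^\alpha)=0$ for all $\alpha$, hence that the counit $\what\omega_{M''}$ is null-homotopic, contradicting the triangle identity $\mathbf{R}\Delta(\what\omega_{M''})\circ\what\omega_{\mathbf{R}\Delta(M'')}=\id$. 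Nothing in your outline substitutes for this argument.

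The second gap is the artinianity step. You propose to treat $U_R$ as ``a f.g.\ injective cogenerator of the exact abelian category $\mathcal M_R=\rmod R$'' and run Azumaya's Theorem~\ref{t:Azumaya1} verbatim. But $U_R$ is a cotilting module, not an injective cogenerator: it cogenerates a nontrivial torsion pair and is injective only relative to $\Cogen U$. This is precisely where the cotilting setting departs from the Morita setting (Remark~\ref{r:CotiltingBimodulesOverArtinian} exhibits artinian rings with a cotilting bimodule and no Morita bimodule), so there is no ``classical Azumaya argument applied within the subcategory.'' Likewise, reading \cite[Corollary~3.7]{MT14} ``in reverse'' asserts its converse, which is not available in the literature---it is Theorem~\ref{t:CotiltingBimoduleBothArtinianRings} of this very paper, and its proof is not formal: one must first verify the Reiten--Ringel condition to get that $U_R$ and ${}_SU$ are $\Sigma$-pure-injective (via \cite[Theorem~6.3]{CMT10}), deduce endofiniteness and hence finite length of $U$ on both sides, and conclude that $R$ and $S$ are artinian because they are finitely cogenerated by $U$. (Once the bimodule is known to be cotilting you may legitimately invoke Theorem~\ref{t:CotiltingBimoduleBothArtinianRings}, (b)$\Rightarrow$(a), since in your setting the $\De$-reflexive modules on both sides are exactly the finitely generated ones; but that is the $\Sigma$-pure-injectivity route, not Azumaya's.) Finally, note that closure of $\rmod R$ under submodules only yields right noetherianity---the descending chain condition does not follow from the closure properties you list.
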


Let us briefly motivate the stated hypotheses. They make \cite[Corollary~3.6]{MT10} apply, so that the $\De$-reflexive complexes of the duality are the bounded complexes with $\De$-reflexive cohomology modules; in particular, (2)~is equivalent to the composition of the right derived functors of $F$ and $G$ being the left derived functor of their composition (see \cite[Proposition~5.4]{Har66}). Notice that, analogously to Theorem~\ref{t:Azumaya2}, a priori $\rmod R$ and $S\lmod$ are not exact abelian subcategories, though they turn out to be so for $R$ and $S$ being one-sided artinian, so that the derived categories $\De^b_{\rmod R}(R)$ and $\De^b_{S\lmod}(S^\textup{op})$ exist and are triangle equivalent to $\De^b(\rmod R)$ and $\De^b(S\lmod)$, respectively.

The paper is organised as follows. In Section~\ref{S:Preliminaries} we recall the notion of $\De$-reflexivity, specifically focusing on that related to a cotilting duality. In Section~\ref{S:OneSidedArtinianRing} we study for a right artinian ring $R$ the cotilting duality induced by a cotilting bimodule ${}_SU_R$: we prove that $U_R$ is product complete and, conversely, that $R$ has a cotilting duality if and only if it admits a finitely generated product complete cotilting module $U_R$. Subsequently, in Section~\ref{S:DReflexiveModules} we analyse all the properties of Azumaya's Morita duality. On the one hand, for a right artinian ring $R$ and a cotilting bimodule ${}_SU_R$ we characterise when $S$ is left artinian and the $\De$-reflexive left $S$-modules are finitely generated; on the other hand, by no assumption on $R$ and $S$, we identify the dualities among their bounded derived categories of complexes with finitely generated cohomology which are representable by a cotilting bimodule ${}_SU_R$, proving that the rings are necessarily one-sided artinian.

\section{Preliminaries}%
\label{S:Preliminaries}%
All the rings we consider are associative with $1\ne0$. Given a ring $R$, $\rMod R$ (resp.\ $R\lMod$) will denote the category of its right (resp.\ left) $R$-modules; $\rmod R$ (resp.\ $R\lmod$) the subcategory of the finitely generated right (resp.\ left) $R$-modules; and $\De(R)$ (resp.\ $\De^b(R)$) the unbounded (resp.\ bounded) derived category of $\rMod R$. Given a right $R$-module $C$, we set
\begin{align*}
	{}^{\bot_0}C &= \{M\in\rMod R\mid \Hom_R(M,C)=0\}, \cr
	{}^{\bot_1}C &= \{M\in\rMod R\mid \Ext^1_R(M,C)=0\},
\end{align*}
and we will denote by $\operatorname{Cogen}C$ the class of modules cogenerated by $C$, i.e.~those modules fitting in an exact sequence $0\to M\to C^\alpha$, for some cardinal $\alpha$.

\subsection{Reflexivity and \texorpdfstring{$\cal D$}{D}-reflexivity}%
Given a right adjoint pair of contravariant additive functors $F:\mathcal{A}\rightleftarrows\mathcal{B}:G$ between two abelian categories, one defines its \emph{reflexive objects} as those objects whose counit morphisms are isomorphisms. For instance, as mentioned in the introduction, over module categories dualities among thick subcategories are essentially represented by Morita bimodules $U$, and the reflexive modules are $U$-reflexive.

It is proved in \cite[Lemma~13.6]{Kel96} that the right adjunction 
$F:\mathcal{A}\rightleftarrows\mathcal{B}:G$ extends to a right adjunction $\mathbf{R}F:\De^b(\mathcal{A})\rightleftarrows\De^b(\mathcal{B}):\mathbf{R}G$, as long as the bounded derived categories of $\mathcal{A,B}$ and the right derived functors of $F,G$ exist. As shown in \cite[Example~2.4]{MT10}, one gets two independent notions of reflexivity within the underlying abelian categories, namely when looking either at the objects involved in the adjunction $(F,G)$ or at the objects that, as stalk complexes, are involved in the derived adjunction $(\mathbf{R}F,\mathbf{R}G)$. Thus, the latter yet all the reflexive complexes shall be called \emph{$\De$-reflexive} instead. In \cite[Corollary~3.6]{MT10} it is proved that, under certain homological conditions, a complex is $\De$-reflexive if, and only if, its cohomologies are $\De$-reflexive modules. As we will recall in the forthcoming Remark~\ref{r:CotiltingBimodules}, such conditions are fulfilled by the adjunction we are going to focus on in the present paper, namely the one provided by the contravariant Hom functors of a $1$-cotilting bimodule.

\subsection{Cotilting bimodules}%
\label{SS:CotiltingBimodules}%
Let $R$ be a ring and $n\in\N$. Following \cite[Definition~15.1]{GT12}, a right $R$-module $U$ is called $n$-cotilting if
\begin{enumerate}[(i)]
\item the injective dimension of $U_R$ is $\empty\le n$;
\item $\Ext^i_R(U^\alpha,U)=0$, for all $1\le i\le n$ and every cardinal $\alpha$;
\item there exist $r\ge0$ and an exact sequence $0\to U_r\to\cdots\to U_1\to U_0\to Q\to0$, where each $U_i$ is a direct summand of a product of copies of $U$, and $Q$ is an injective cogenerator of $\rMod R$.
\end{enumerate}

\begin{rem}
Let us recall further well-known facts on $1$-cotilting modules. Given a ring $R$, a module $U_R$ is $1$-cotilting if and only if ${}^{\bot_1}U=\operatorname{Cogen}U$; if and only if ${}^{\bot_0}U\cap{}^{\bot_1}U=\{0\}$ and conditions (i)--(ii) above, for $n=1$, hold. Recall that a module $U_R$ satisfying conditions (i)--(ii) is called \emph{partial cotilting}.%
\label{r:CotiltingModules}%

In particular, any $1$-cotilting module $U_R$ gives rise to the torsion pair $({}^{\bot_0}U,{}^{\bot_1}U)$ in $\rMod R$, meaning that every right $R$-module $M$ fits in a functorial short exact sequence $0\to X\to M\to Y\to0$, with $\Hom_R(X,U)=0$ and $\Ext^1_R(Y,U)=0$. The torsion radical of the torsion pair, i.e.~the functor $M\mapsto X$, is explicitly described as the \emph{reject in $U$}, namely
\[
	\Rej_U(M)=\bigcap\{\ker f\mid f\in\Hom_R(M,U)\},
\]
for all $M_R$ (notice that $\Rej_U(M)=\ker\omega_M$). By dropping any reference to $U_R$, we will call the modules in ${}^{\bot_0}U$ simply the \emph{torsion modules}, whereas the modules in ${}^{\bot_1}U$ will be called \emph{torsionfree modules}. Moreover, we set
\[
	\Delta_R=\Hom_R(-,U)\quad\hbox{and}\quad
	\Gamma_{\!R}=\Ext^1_R(-,U).
\]
\end{rem}

A $S$-$R$-bimodule ${}_SU_R$ is called \emph{$n$-cotilting bimodule} if it is faithfully balanced and ${}_SU$ and $U_R$ are $n$-cotilting modules. Thus, $0$-cotilting bimodules are precisely the Morita bimodules (see also \cite[Remark~15.8]{GT12}). Henceforth, we will refer to a $1$-cotilting bimodule just as a \emph{cotilting bimodule} (see Remark~\ref{r:CotiltingBimodules}).

\begin{rem}
As cotilting bimodules generalise Morita bimodules, in view of \cite[Remark~2.9]{Xue92} let us survey what happens in the context we are interested in---the artinian setting. Recall that, given any bimodule ${}_SM_R$, the triangular matrix ring
\(
	\Lambda=\bigl(\!\begin{smallmatrix} 
	R & 0 \cr
	M & S
	\end{smallmatrix}\!\bigr)
\)
is a right (left) artinian ring if and only if both $S$ and $R$ are right (left) artinian rings, and $M_R$ (${}_SM$) is finitely generated.%
\label{r:CotiltingBimodulesOverArtinian}

Now, on the one hand cotilting bimodules extend Morita bimodules over artinian rings. Indeed, let $D$ be a division ring having a division subring $C$ such that $\dim({}_CD)$ is finite but $\dim(D_C)$ is not. By the above, the triangular matrix ring
\(
	\Lambda=\bigl(\!\begin{smallmatrix} 
	D & 0 \cr
	D & C
	\end{smallmatrix}\!\bigr)
\)
is a two-sided artinian ring, but it has no Morita bimodules (for $\Lambda\lmod$ and $\rmod \Lambda$ are not dual to each other); nonetheless, ${}_\Lambda\Lambda_\Lambda$ is a cotilting bimodule by \cite[Corollary~1.2]{MT14}.

On the other hand, if $R$ is a right artinian ring and ${}_SU_R$ is a cotilting bimodule, then $S$ needs not to be left artinian. Indeed, let
\(
	\Lambda=\bigl(\!\begin{smallmatrix} 
		\R & 0 \cr
		\R & \Q
	\end{smallmatrix}\!\bigr)
\);
then $\Lambda$ is right artinian, ${}_\Lambda\Lambda_\Lambda$ is a cotilting bimodule by \cite[Corollary~1.2]{MT14} again, and $\Lambda$ is not even left noetherian. Nonetheless, $S$ is always left coherent (see the end of Remark~\ref{r:FinitenessConditions}).
\end{rem}

As announced, given a cotilting bimodule ${}_SU_R$ we are going to analyse the duality induced by the right adjunction of the right derived functors of $\Delta_S$ and $\Delta_R$, especially focusing on the case of $R$ being right artinian. Thus, we will study the duality induced by
\[
	\mathbf{R}\Delta_S:\De^b(S^{\rm op})\longrightleftarrows \De^b(R):\mathbf{R}\Delta_R
\]
which we refer to as \emph{cotilting duality} (still implying any reference to ${}_SU_R$).
When there is no risk of confusion, we shall denote by $\Delta$ either $\Delta_S$ and $\Delta_R$ (and by $\Gamma$ either $\Gamma_{\!S}$ and $\Gamma_{\!R}$).

\begin{rem}
We are going to deal exclusively with $1$-cotilting bimodules, for the duality they induce fulfils the aforementioned conditions of \cite[Corollary~3.6]{MT10}, so that a complex is $\De$-reflexive if, and only if, its cohomologies are $\De$-reflexive modules. In details, given a cotilting bimodule $U$, since its injective dimensions are at most~$1$, the functors $\Delta$'s have cohomological dimension at most~$1$ (for $R^\ast\Delta=H^\ast\mathbf{R}\Delta\cong\Ext^\ast(-,U)$), and since $\Gamma(U^\alpha)=0$ for any cardinal $\alpha$, projective modules are sent to $\Delta$-acyclic objects.%
\label{r:CotiltingBimodules}%

Notice that the previous properties also provide, for any cotilting bimodule, the natural isomorphisms $\mathbf{R}\Delta\circ\mathbf{R}\Delta\cong \mathbf{L}\Delta^{2}$ (see \cite[Proposition~5.4]{Har66}); in particular, the $\De$-reflexive objects shall also be regarded as the complexes $M^\chb$ whose counit map $\what\omega_M\colon M^\chb\to\mathbf{L}\Delta^{2}(M^\chb)$ is a quasi-isomorphism.
\end{rem}

As we just said, in a cotilting duality the $\De$-reflexive complexes are precisely the complexes whose cohomologies are $\De$-reflexive modules. We now recall the following useful general result describing the $\De$-reflexive modules.

\begin{prop}[{\cite[Proposition~1.7]{MT14}}]
Let ${}_SU_R$ be a cotilting bimodule. For every $M\in\rMod R$ there exists a commutative diagram with exact row%
\label{p:DReflexiveModules}%
\[
\xymatrix{%
	&& M \ar[d]_-{H^0(\what\omega_M)}\ar[dr]^-{\omega_M} \cr
	0 \ar[r] & \Gamma^2(M) \ar[r] & L^{0}\Delta^{2}(M) \ar[r] & \Delta^{2}(M) \ar[r] & 0
}
\]
Moreover, for all $j\in\Z$,
\[
	L^{j}\Delta^{2}(M) =
	\begin{cases}
		0 & \hbox{if $j\ne-1,0$,} \cr
		\Delta\Gamma(M) & \hbox{if $j=-1$.}
	\end{cases}
\]
In particular, $M_R$ is $\cal D$-reflexive if and only if $\Delta\Gamma(M)=0$ (whence $\mathbf{L}\Delta^{2}(M)$ is a stalk complex) and $H^0(\what\omega_M)$ is an isomorphism.
\end{prop}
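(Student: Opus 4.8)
The plan is to derive everything from the natural isomorphism $\mathbf R\Delta_S\circ\mathbf R\Delta_R\cong\mathbf L\Delta^{2}$ of Remark~\ref{r:CotiltingBimodules}, together with two cohomology computations, working throughout inside the bounded derived categories $\De^b(S^{\rm op})$ and $\De^b(R)$. Fix $M\in\rMod R$. Since $U_R$ has injective dimension at most~$1$, the complex $\mathbf R\Delta_R(M)$ has cohomology concentrated in degrees $0$ and $1$, namely $H^0=\Delta(M)$ and $H^1=\Gamma(M)$, so the first step is to consider its canonical truncation triangle
\[
	\Delta(M)\longrightarrow\mathbf R\Delta_R(M)\longrightarrow\Gamma(M)[-1]\xrightarrow{\;+1\;}
\]
and to apply the contravariant triangulated functor $\mathbf R\Delta_S$, which reverses the arrows and sends $\Gamma(M)[-1]$ to $\mathbf R\Delta_S(\Gamma(M))[1]$; using $\mathbf R\Delta_S\circ\mathbf R\Delta_R(M)\cong\mathbf L\Delta^{2}(M)$, this yields a distinguished triangle
\[
	\mathbf R\Delta_S(\Gamma(M))[1]\longrightarrow\mathbf L\Delta^{2}(M)\longrightarrow\mathbf R\Delta_S(\Delta(M))\xrightarrow{\;+1\;}.
\]

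The first computation is that $\mathbf R\Delta_S(\Delta(M))$ is the stalk complex with $\Delta^{2}(M)$ in degree $0$: applying the left exact functor $\Delta_R=\Hom_R(-,U)$ to an epimorphism $R^{(\alpha)}\twoheadrightarrow M$ from a free module embeds $\Delta(M)$ into $\Hom_R(R^{(\alpha)},U)\cong U^{\alpha}$, a product of copies of ${}_SU$, so $\Delta(M)\in\Cogen({}_SU)={}^{\bot_1}{}_SU$ by Remark~\ref{r:CotiltingModules}; hence $\Gamma_S(\Delta(M))=0$, and since ${}_SU$ has injective dimension at most~$1$ this forces $\Ext^i_S(\Delta(M),U)=0$ for all $i\ge1$. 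The second computation is immediate: for the module $\Gamma(M)$ one has $H^0\mathbf R\Delta_S(\Gamma(M))=\Delta\Gamma(M)$ and $H^1\mathbf R\Delta_S(\Gamma(M))=\Gamma^{2}(M)$ (and no other cohomology, again because ${}_SU$ has injective dimension at most~$1$), so $\mathbf R\Delta_S(\Gamma(M))[1]$ carries $\Delta\Gamma(M)$ in degree $-1$ and $\Gamma^{2}(M)$ in degree $0$. Feeding these into the long exact cohomology sequence of the triangle above then produces, all at once, $L^{j}\Delta^{2}(M)=0$ for $j\ne-1,0$, an isomorphism $L^{-1}\Delta^{2}(M)\cong\Delta\Gamma(M)$, and a short exact sequence $0\to\Gamma^{2}(M)\to L^{0}\Delta^{2}(M)\to\Delta^{2}(M)\to0$.

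What remains is to recognise the surjection $L^{0}\Delta^{2}(M)\to\Delta^{2}(M)$ just obtained as the map through which $\omega_M$ factors $H^{0}(\what\omega_M)$. The point is that this surjection is $H^{0}$ of the natural transformation $\mathbf L\Delta^{2}(-)\to\Delta^{2}(-)$ got by applying $\mathbf R\Delta_S$ to the truncation morphism $\Delta(-)\to\mathbf R\Delta_R(-)$; hence precomposing it with $H^{0}(\what\omega_M)$ defines a natural transformation $\id_{\rMod R}\Rightarrow\Delta^{2}$, and since $\id_{\rMod R}\cong\Hom_R(R,-)$ is representable such a transformation is determined by its value at $R$. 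At $M=R$ everything collapses, for $R$ is projective: $\mathbf R\Delta_R(R)=\Delta_R(R)[0]$ and $\mathbf R\Delta_S(\mathbf R\Delta_R(R))$ are stalk complexes, the truncation morphism is the identity and $\what\omega_R$ is the underived counit $\omega_R$, so the composite is indeed $\omega_R$. Finally the ``in particular'' follows: $M_R$ is $\De$-reflexive iff $\what\omega_M$ is a quasi-isomorphism, i.e.\ iff $H^{j}(\what\omega_M)$ is an isomorphism for every $j$, which by the description of $\mathbf L\Delta^{2}(M)$ above amounts precisely to $\Delta\Gamma(M)=L^{-1}\Delta^{2}(M)=0$ (so that $\mathbf L\Delta^{2}(M)$ is a stalk complex) together with $H^{0}(\what\omega_M)$ being an isomorphism.

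The main obstacle is not the derived-category bookkeeping---which is routine once the isomorphism $\mathbf R\Delta_S\circ\mathbf R\Delta_R\cong\mathbf L\Delta^{2}$ is granted---but the two substantive inputs: the vanishing $\Gamma_S(\Delta_R(M))=0$, which is exactly where the cotilting identity ${}^{\bot_1}U=\Cogen U$ is used and is what keeps the degree-$0$ part of the sequence a $2$-step extension rather than something longer; and the identification of the connecting map $L^{0}\Delta^{2}(M)\to\Delta^{2}(M)$ with the counit, for which the reduction to the projective module $R$ (where every derived functor agrees with its underived value) is the device that avoids an explicit chase with projective resolutions.
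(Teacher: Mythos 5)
Your argument is correct, and it is worth noting that the paper itself offers no proof of this statement: it is quoted verbatim from \cite[Proposition~1.7]{MT14}, so there is nothing internal to compare against beyond the citation. Your derivation is exactly the natural one and matches the spirit of the original: truncate $\mathbf{R}\Delta_R(M)$ into $\Delta(M)\to\mathbf{R}\Delta_R(M)\to\Gamma(M)[-1]$, apply $\mathbf{R}\Delta_S$, and use that $\Delta_R(M)\in\Cogen({}_SU)={}^{\bot_1}({}_SU)$ together with $\injdim{}_SU\le1$ to make $\mathbf{R}\Delta_S(\Delta(M))$ a stalk, after which the long exact sequence yields the vanishing of $L^{j}\Delta^{2}(M)$ for $j\ne-1,0$, the identification $L^{-1}\Delta^{2}(M)\cong\Delta\Gamma(M)$, and the short exact sequence in degree~$0$; the ``in particular'' clause then reads off directly from $\what\omega_M$ being a quasi-isomorphism. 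The only step that genuinely requires care is the commutativity of the triangle through $\omega_M$, and your Yoneda reduction handles it: both the composite $M\to L^{0}\Delta^{2}(M)\to\Delta^{2}(M)$ and $\omega$ are natural transformations $\id_{\rMod R}\Rightarrow\Delta^{2}$, so it suffices to check them at the projective stalk $R$, where $\mathbf{R}\Delta_R(R)=U$ and $\Delta_R(R)$ is $\Delta_S$-acyclic, so the derived counit of the adjunction of \cite[Lemma~13.6]{Kel96} is represented by the underived evaluation $\omega_R$ --- the one implicit compatibility you invoke, which is standard and correctly flagged. No gaps.
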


Concerning the case we are more interested in, the following holds.

\begin{cor}[{\cite[Corollary~3.7]{MT14}}]
Let $R$ be a right artinian ring, and ${}_SU_R$ a cotilting bimodule. Then the $\De$-reflexive right $R$-modules coincide with the finitely generated ones. In particular, $U_R$ is of finite lenght.%
\label{c:CotiltingBimoduleOneSidedArtinian}%
\end{cor}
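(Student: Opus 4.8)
The plan is to establish the two inclusions $\rmod R\subseteq\mathcal M_R$ and $\mathcal M_R\subseteq\rmod R$ separately; the final assertion then follows at once, since a direct computation via Proposition~\ref{p:DReflexiveModules} shows $U_R\in\mathcal M_R$ --- indeed $\Delta_R(U)={}_SS$, hence $\Gamma_R(U)=\Ext^1_R(U,U)=0$ and $\Delta\Gamma(U)=0$, while $\omega_U\colon U\to\Delta^2(U)=\Hom_S({}_SS,U)\cong U$ is an isomorphism --- so that $U_R$ is finitely generated, and therefore of finite length because $R$ is right artinian.

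For the inclusion $\rmod R\subseteq\mathcal M_R$, note first that $R_R\in\mathcal M_R$: one has $\Delta_R(R)=U_R$ and $\Gamma_R(R)=0$, so $\Delta\Gamma(R)=0$, and $\omega_R\colon R\to\Delta^2(R)=\Hom_S({}_SU,U)$ is an isomorphism since ${}_SU_R$ is faithfully balanced; apply Proposition~\ref{p:DReflexiveModules}. Because $\mathcal M_R$ is an exact abelian subcategory of $\rMod R$ by \cite[Theorem~3.10]{MT10}, it is closed under finite direct sums and under cokernels of its own morphisms, hence it contains every cokernel of a homomorphism between finitely generated free modules; that is, $\mathcal M_R$ contains all finitely presented modules. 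As $R$ is right artinian, hence right noetherian, finitely presented modules are precisely the finitely generated ones, and the inclusion follows.

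For the inclusion $\mathcal M_R\subseteq\rmod R$ --- the substantial direction --- I would first record two closure properties of $\mathcal M_R$: since $\mathbf R\Delta\circ\mathbf R\Delta$ is a triangulated functor, $\what\omega$ a natural transformation, and a short exact sequence of modules gives rise to a distinguished triangle, the two-out-of-three property for isomorphisms in triangles shows that $\mathcal M_R$ is closed under extensions and that, whenever two of the three terms of a short exact sequence of $R$-modules are $\De$-reflexive, so is the third. Now let $M\in\mathcal M_R$. Proposition~\ref{p:DReflexiveModules}, together with the identity $\ker\omega_M=\Rej_U(M)$, shows that $\omega_M$ is surjective, so there is a short exact sequence
\[
	0\longrightarrow\Rej_U(M)\longrightarrow M\stackrel{\omega_M}{\longrightarrow}\Delta^2(M)\longrightarrow0
\]
with $\Rej_U(M)\cong\Gamma^2(M)$ torsion and $\Delta^2(M)\cong M/\Rej_U(M)\in\Cogen U={}^{\bot_1}U$ torsionfree. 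Moreover $\Delta^2(M)$ is $\De$-reflexive: $\mathbf R\Delta_R(M)$ is a $\De$-reflexive complex (the counit on it is inverted, by the triangle identities), so by \cite[Corollary~3.6]{MT10} its cohomology $\Delta_R(M)$ is a $\De$-reflexive left $S$-module, whence $\mathbf R\Delta_S(\Delta_R(M))$ is a $\De$-reflexive complex and its cohomology $\Delta^2(M)=H^0(\mathbf R\Delta_S(\Delta_R(M)))$ is $\De$-reflexive by \cite[Corollary~3.6]{MT10} again; by the two-out-of-three property, $\Rej_U(M)\in\mathcal M_R$ as well. It thus remains to prove that a $\De$-reflexive torsionfree module and a $\De$-reflexive torsion module are always finitely generated.

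This last step is the main obstacle, and it is precisely where the right artinian hypothesis on $R$ must be combined with the \emph{two-sided} cotilting structure: a one-sided cotilting module over a right artinian ring need not be finitely generated (for instance $R^{\mathbb N}$ over $R=k[x]/(x^2)$), so faithful balancedness and the cotilting property of ${}_SU$ cannot be dispensed with. I would reduce everything to showing that $U_R$ has finite length: for a $\De$-reflexive torsionfree $L$ one has $\Gamma_R(L)=0$ and, by Proposition~\ref{p:DReflexiveModules}, $L\cong\Delta^2(L)=\Hom_S(\Hom_R(L,U),U)$ is embedded in a finite power of $U$; for a $\De$-reflexive torsion $T$ one has $\Delta_R(T)=0$ and $T\cong\Gamma^2(T)=\Ext^1_S(\Gamma_R(T),U)$; so once $U_R$ is known to have finite length, each of these is finitely generated and the displayed short exact sequence gives $M\in\rmod R$. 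To see that $U_R$ has finite length I would start from the fact that, over a right artinian ring, every nonzero module has nonzero socle, so that $\operatorname{soc}(U_R)$ is essential in $U_R$ (a submodule meeting it trivially would have zero socle, hence vanish); it then remains to bound the length of $\operatorname{soc}(U_R)$ and the Loewy length of $U_R$ by exploiting condition~(iii) in the definition of $1$-cotilting module applied to the minimal injective cogenerator of $\rMod R$, together with the faithfully balanced, two-sided cotilting structure of ${}_SU_R$ --- this interplay between the one-sided artinian hypothesis and the two-sided data being the genuinely delicate point of the argument.
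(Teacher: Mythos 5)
This corollary is not proved in the paper at all: it is imported verbatim from \cite[Corollary~3.7]{MT14}, so there is no in-house argument to compare yours with. Judged on its own terms, your proposal is sound up to a point: the inclusion $\rmod R\subseteq\mathcal M_R$ via $R_R\in\mathcal M_R$, closure of $\mathcal M_R$ under cokernels, and $\textup{f.g.}=\textup{f.p.}$ over a right noetherian ring is correct, and so is the reduction of the converse inclusion to the two cases of a $\De$-reflexive torsionfree module and a $\De$-reflexive torsion module via the sequence $0\to\Rej_U(M)\to M\to\Delta^{2}(M)\to0$.

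The genuine gap is in the last step, and it is twofold. First, the claimed implication ``$U_R$ of finite length $\Rightarrow$ the two special cases are finitely generated'' does not hold as stated. For torsionfree $L$ you write that $L\cong\Delta^{2}(L)=\Hom_S(\Delta(L),U)$ ``is embedded in a finite power of $U$'': this requires ${}_S\Delta(L)$ to be finitely generated over $S$, which you never establish and which is essentially the same finiteness problem transported to the other side; torsionfreeness alone only gives $L\hookrightarrow U^{\alpha}$ for some possibly infinite cardinal $\alpha$, and a submodule of an infinite product of finite-length modules need not be finitely generated (already $\Cogen U=\rMod R$ when $U$ is an injective cogenerator, yet only the finitely generated modules are reflexive there, cf.\ Corollary~\ref{c:Azumaya2}). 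The torsion case is worse: $\Ext^1_S(\Gamma(T),U)$ being a finitely generated $R$-module for an arbitrary left $S$-module $\Gamma(T)$ has no justification, finite length of $U_R$ notwithstanding. Second, the finite length of $U_R$ is itself only gestured at (``bound the socle and the Loewy length\dots''), whereas in \cite{MT14} it is a \emph{consequence} of $\mathcal M_R=\rmod R$ (because $U_R$ is $\De$-reflexive), not an input; so you would need an independent proof of it, and even granting it the deduction above still fails. The missing idea is precisely the content of \cite[Lemma~3.1 and Proposition~3.6]{MT14}: one must exploit $\De$-reflexivity (not merely torsionfreeness or torsion-ness) together with the descending chain condition on the finite-length modules $\Delta(K)$ arising from finitely generated submodules $K\le M$, to force $M$ to be finitely generated.
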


In Remark~\ref{r:CotiltingBimodulesOverArtinian} we saw that cotilting bimodules effectively generalise Morita bimodules over artinian rings, and that if one ring is one-sided artinian, then the other ring may not be one-sided artinian as well. By virtue of our main result (Theorem~\ref{t:ExistenceCotiltingDuality}) the last example of the remark exhibits a right artinian and not left artinian ring $\Lambda$ admitting a cotilting duality: in fact, the regular bimodule ${}_\Lambda\Lambda_\Lambda$ is cotilting, and $\Lambda_\Lambda$ is product complete.

For the sake of the reader, we recall the notion of product completeness together with some results from \cite{KS97} concerning its interplay with other finiteness conditions.

\begin{rem}
Let $R$ be a ring and $M$ a right $R$-module with $S=\End_R(M)$. The module $M_R$ is called:%
\label{r:FinitenessConditions}%
\begin{itemize}
\item \emph{endofinite}, if it is of finite length over $S$;

\item \emph{product complete}, provided the class $\operatorname{Add}M$ of the direct summands of coproducts of copies of $M$ is closed under products;

\item \emph{$\Sigma$-pure-injective}, if any coproduct of copies of $M$ is pure-injective.
\end{itemize}
Every endofinite module is product complete (these notions coincide over noetherian rings), and every product complete module is $\Sigma$-pure-injective (the notions are distinct even over $\Z$). More in details, $M$ is endofinite if and only if every direct summand of $M$ is product complete, if and only if $M$ is $\Sigma$-pure-injective and ${}_SM$ is noetherian. Eventually, a finitely generated module $M_R$ is product complete if and only if $S$ is left coherent and right perfect, and ${}_SM$ is finitely presented.

In case $R$ is right artinian and $M$ is a cotilting bimodule ${}_SU_R$, then by Corollary~\ref{c:CotiltingBimoduleOneSidedArtinian} it follows that ${}_SU$ is endofinite; by \cite[Corollary~5.4]{CMT10} we have that $S$ is left coherent.
\end{rem}

\section{When \texorpdfstring{$R$}{R} is right artinian}%
\label{S:OneSidedArtinianRing}%
As we discussed above, if ${}_SU_R$ is a cotilting bimodule and $R$ is a right artinian ring, then $U_R$ is of finite length, and $S$ is left coherent. We are going to prove that $U_R$ is also product complete. Let us denote by $\mathcal{F}$ the subcategory consisting of the finitely presented left $S$-modules cogenerated by ${}_SU$. We need the following lemma, which is a specialisation of the Reiten--Ringel Condition \cite[Condition~5.1]{CMT10}. 

\begin{lem}
Let ${}_SU$ be a $\Sigma$-pure-injective cotilting module. If $0\to Y_0\to Y\to X\to0$ is a short exact sequence in $S\lMod$ with $Y_0,Y\in{}^{\bot_1}U$, $Y_0$ finitely generated, and $X\in{}^{\bot_0}U$, then $Y$ is finitely presented.
\label{l:ReitenRingel}%
\end{lem}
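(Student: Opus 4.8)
The plan is to analyze the given short exact sequence $0\to Y_0\to Y\to X\to 0$ through the lens of $\Sigma$-pure-injectivity of ${}_SU$ and cotilting theory. Since $Y_0, Y\in{}^{\bot_1}U=\operatorname{Cogen}U$ and $X\in{}^{\bot_0}U$, we are looking precisely at a sequence realizing the torsion pair $({}^{\bot_0}U,{}^{\bot_1}U)$ partially: $Y_0$ is a torsionfree submodule of the torsionfree module $Y$ with torsion quotient $X$. The first step I would carry out is to reduce to finite generation of $Y$, for then finite presentation follows from the hypothesis that $Y_0$ is finitely generated together with the fact that ${}_SU$ being endofinite or at least the relevant coherence properties (cf.\ Remark~\ref{r:FinitenessConditions}) control finitely generated torsionfree modules; more directly, once $Y$ is finitely generated, the sequence $0\to Y_0\to Y\to X\to0$ with $Y_0$ finitely generated and $Y$ finitely generated forces $X$ finitely presented, and conversely a finitely generated module with a finitely generated submodule is finitely presented once the submodule is. So the crux is to show $Y$ is finitely generated.

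To prove $Y$ is finitely generated, I would exploit $\Sigma$-pure-injectivity. The key mechanism is that $\Sigma$-pure-injective modules have the descending chain condition on the subgroups of the form $Ma$ (finite matrix subgroups / pp-definable subgroups), equivalently that $\operatorname{Add}({}_SU)$-approximations behave finitely. More usefully here: apply $\Delta_S=\Hom_S(-,U)$ to the sequence. Since $X\in{}^{\bot_0}U$ we get $\Delta_S(X)=0$, and since $X\in{}^{\bot_1}U$ as well (because $X\in{}^{\bot_0}U\subseteq{}^{\bot_1}U$? — no, that inclusion is false in general, so instead I use that $Y\in{}^{\bot_1}U$), the long exact sequence in $\Ext_S(-,U)$ gives $\Delta_S(Y)\cong\Delta_S(Y_0)$ and an injection $\Gamma_S(Y_0)\hookrightarrow\Gamma_S(Y)$ with cokernel inside $\Gamma_S(X)$. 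Now $Y_0$ finitely generated and ${}_SU$ endofinite-like yields $\Delta_S(Y_0)=\Hom_S(Y_0,U)$ is a finitely generated right... — here I would instead invoke the Reiten–Ringel condition \cite[Condition~5.1]{CMT10} directly, of which this lemma is stated to be a specialisation: the condition asserts exactly that in a $\Sigma$-pure-injective cotilting setup, extensions of torsion modules by finitely generated torsionfree modules, staying torsionfree, remain finitely presented. The proof of that general condition proceeds by noting $Y\in\operatorname{Cogen}U$ embeds in $U^{(I)}$ (since $Y$ is a directed union of its finitely generated submodules, each still torsionfree, and $U$ is pure-injective one can bound the index set), and then using that $Y/Y_0=X$ is torsion to collapse the embedding onto finitely many coordinates.

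Concretely, the key steps in order: (1) embed $Y\hookrightarrow U^I$ using $Y\in\operatorname{Cogen}U$; (2) observe that the composite $Y_0\hookrightarrow Y\hookrightarrow U^I$, with $Y_0$ finitely generated, factors through a finite subproduct $U^{I_0}$ after a pure-injective-envelope argument — i.e.\ the image of $Y_0$ is contained in $U^{I_0}\oplus(\text{something})$ and the projection to the complement kills $Y_0$; (3) since $X=Y/Y_0$ is torsion, $\Hom_S(Y,U)=\Hom_S(Y_0,U)$ via restriction (the isomorphism $\Delta_S(Y)\cong\Delta_S(Y_0)$ above), which forces every map $Y\to U$ to be determined by its restriction to $Y_0$; combined with step (2), this means $Y\to U^I$ has image in a finite subproduct, so $Y\hookrightarrow U^{I_0}$ with $I_0$ finite; (4) now $Y$ is a torsionfree submodule of $U^{I_0}$ containing the finitely generated $Y_0$ with torsion quotient $X$, and $\Sigma$-pure-injectivity (the DCC on matrix subgroups, or that $U^{I_0}$ has finite length over $\End$ when $U$ is endofinite) gives that $Y$ itself is finitely generated; (5) deduce $X$ finitely presented, hence $Y$ finitely presented. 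I expect the main obstacle to be step (2)/(3): making precise why the finitely generated torsionfree submodule $Y_0$ together with the torsion quotient forces the ambient embedding to descend to a \emph{finite} subproduct. This is exactly where $\Sigma$-pure-injectivity (as opposed to mere pure-injectivity) is essential, and where I would lean on the machinery of \cite{KS97} and \cite{CMT10} rather than reprove it; the cleanest route is probably to cite \cite[Condition~5.1]{CMT10} and verify the hypotheses of that condition are met by a $\Sigma$-pure-injective cotilting module, reducing the present lemma to a direct specialisation as the statement already advertises.
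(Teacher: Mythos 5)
There is a genuine gap, and it sits exactly where you predicted. Your fallback plan---``cite \cite[Condition~5.1]{CMT10} and verify its hypotheses''---is not available: Condition~5.1 is not a theorem but a \emph{condition} on the torsionfree class, and in \cite{CMT10} it is used as a \emph{hypothesis} (together with noetherianness, see \cite[Theorem~6.3]{CMT10}) to deduce $\Sigma$-pure-injectivity; the present lemma asserts the converse direction, namely that a strengthened form of that condition actually holds for a $\Sigma$-pure-injective cotilting module, so it genuinely requires a proof rather than a citation. Your direct attempt also breaks at step~(2): a finitely generated module embedded in a product $U^I$ need not land in a finite subproduct (think of a diagonal embedding $\Z\hookrightarrow\Z^{\N}$); finitely generated is not finitely cogenerated, so the claim that almost all coordinate maps kill $Y_0$ has no justification, and with it steps~(3)--(4) collapse. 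Even granting an embedding $Y\hookrightarrow U^{n}$, step~(4) does not follow from $\Sigma$-pure-injectivity alone: the DCC on matrix subgroups, or finite length over the endomorphism ring, would need endofiniteness, which is not assumed. Finally, your opening reduction is also faulty: $Y_0,Y$ finitely generated does not make $X$ finitely presented, and ``a finitely generated module containing a finitely generated (even finitely presented) submodule is finitely presented'' is false in general, so even the passage from $Y$ finitely generated to $Y$ finitely presented is unaccounted for.

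The paper's argument uses a different mechanism, which is where $\Sigma$-pure-injectivity really enters: by \cite[Corollary~6.14]{GT12} and \cite[Corollary~4.4]{CMT10}, the class ${}^{\bot_1}U$ coincides with the direct summands of $\mathcal{F}$-filtered modules, $\mathcal{F}$ being the finitely presented modules in $\Cogen U$. Writing $N=Y\oplus Y'$ with $N$ $\mathcal{F}$-filtered, the Hill lemma produces a finitely presented $X'$ with $Y_0\le X'\le N$ and $N/X'$ again $\mathcal{F}$-filtered, hence torsionfree; since $X\cong Y/Y_0$ is torsion and a direct summand of $N/Y_0$, it must land in $X'/Y_0$ as a direct summand, so $X$ is finitely generated and therefore $Y$ is finitely generated. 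A second application of the Hill lemma embeds $Y$ into a finitely presented layer of $N$, and left coherence of $S$ (available in this setting, cf.\ Remark~\ref{r:FinitenessConditions}) upgrades finite generation to finite presentation. If you want to repair your write-up, you need both this deconstruction/Hill-lemma input (or a substitute for it) to get $Y$ finitely generated, and the coherence step to get finite presentation; neither is supplied by your sketch.
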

\begin{proof}
First let us prove that ${}_SX$ is finitely generated. Since ${}_SU$ is $\Sigma$-pure-injective, by \cite[Corollary~6.14]{GT12} and \cite[Corollary~4.4]{CMT10} the subcategory ${}^{\bot_1}U$ of $S\lMod$ coincides with the class of direct summands of $\mathcal{F}$-filtered modules. Thus, there exists a $\mathcal{F}$-filtered module $N$ such that $N=Y\oplus Y'$; by Hill lemma (see \cite[Theorem~2.1]{CMT10}) there exists a finitely presented module $X'$ such that $Y_0\le X'\le N$ with $N/X'$ being $\mathcal{F}$-filtered again, hence in particular in ${}^{\bot_1}U$. From the short exact sequence $\smash[t]{0\to X'\!/Y_0'\to N/Y_0\buildrel\phi\over \to N/X\to0}$,
since $X\cong Y/Y_0\le N/Y_0$ is in ${}^{\bot_0}U$ by assumption, we obtain $X\le\ker\phi$. Hence in particular $X\le_\oplus X'\!/Y_0$, and $X$ is finitely generated, as claimed. By extension-closure, $Y$ is finitely generated, and in particular it is a finitely generated submodule of the $\mathcal{F}$-filtered module $N$. Then, by Hill lemma, there exists a finitely presented module $X''$ such that $Y\le X''\le N$. Since the ring $S$ is left coherent, we get that $Y$ is finitely presented. \qedhere
\end{proof}

\begin{prop}
Let $R$ be a right artinian ring and ${}_SU_R$ be a cotilting bimodule. Then $U_R$ is product complete.
\label{p:ProductComplete}%
\end{prop}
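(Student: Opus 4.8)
The plan is to show that $\operatorname{Add}U_R$ is closed under products by exploiting the characterization recalled in Remark~\ref{r:FinitenessConditions}: a finitely generated module $U_R$ is product complete if and only if $S=\End_R(U)$ is left coherent and right perfect and ${}_SU$ is finitely presented. We already know from Corollary~\ref{c:CotiltingBimoduleOneSidedArtinian} that $U_R$ has finite length (so in particular it is finitely generated), and from Remark~\ref{r:FinitenessConditions} that $S$ is left coherent and ${}_SU$ is endofinite, hence ${}_SU$ is finitely generated; combined with left coherence this gives that ${}_SU$ is finitely presented. So the only missing ingredient is that $S$ is right perfect. Equivalently, by Bass's theorem, I must show that $S$ satisfies the descending chain condition on finitely generated left ideals, or that every flat left $S$-module is projective, or that $S/\operatorname{rad}S$ is semisimple and $\operatorname{rad}S$ is left T-nilpotent.

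First I would set up the ambient category $\mathcal{F}$ of finitely presented left $S$-modules cogenerated by ${}_SU$, as introduced just before Lemma~\ref{l:ReitenRingel}, and recall that since ${}_SU$ is $\Sigma$-pure-injective (being endofinite, even), the torsionfree class ${}^{\bot_1}U$ in $S\lMod$ consists of the direct summands of $\mathcal{F}$-filtered modules, by \cite[Corollary~6.14]{GT12} and \cite[Corollary~4.4]{CMT10}. The key point is to transport the artinianness of $R$ across the cotilting duality: via $\Delta_R=\Hom_R(-,U)$ and $\Gamma_R=\Ext^1_R(-,U)$, a finitely generated right $R$-module $M$ (which is $\De$-reflexive by Corollary~\ref{c:CotiltingBimoduleOneSidedArtinian}) corresponds to the left $S$-module data $\Delta_R(M)$ and $\Gamma_R(M)$, both of which lie in the torsionfree class and — since $R$ is right artinian and the Cotilting theorem dualities are exact on the relevant torsion pairs — are finitely generated, indeed finitely presented by Lemma~\ref{l:ReitenRingel} applied to the canonical short exact sequences. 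A descending chain of finitely generated left ideals of $S$ would then, by left-exactness of $\Delta$ and the duality, produce an ascending chain of finitely generated submodules inside a $\De$-reflexive object, which must stabilize because the corresponding right $R$-side object is noetherian (being finite length over the right artinian $R$, after passing through $\Delta_S$). I would be careful that finitely generated left ideals of $S$ need not be torsionfree, so I would first reduce to their torsionfree quotients (mod out by $\Rej_U$), handle those via the duality, and control the torsion part using that torsion finitely generated left $S$-modules form a nicely behaved class under the hypotheses.

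Alternatively — and this is the cleaner route I would actually pursue — I would invoke \cite{KS97} directly: product completeness of $U_R$ is equivalent to the $\Sigma$-pure-injective module $U_R$ having $\operatorname{End}_R(U)=S$ semiperfect together with a finiteness condition on the lattice of "definable" data, and the structure of $\operatorname{Add}U_R$ is governed by the indecomposable direct summands of $U_R$. Since $U_R$ has finite length, it has only finitely many non-isomorphic indecomposable summands, each with local endomorphism ring, so $S$ is semiperfect; product completeness then reduces to showing that arbitrary products of copies of $U_R$ decompose as coproducts of these finitely many indecomposables, which follows once we know every product $U_R^{\alpha}$ is $\Sigma$-pure-injective with the same indecomposable summands — and $\Sigma$-pure-injectivity of all powers is exactly $\Sigma$-pure-injectivity of $U_R$, already in hand, while the decomposition into the finitely many indecomposables is forced by endofiniteness of ${}_SU$ (finite length over $S$ makes the Krull--Remak--Schmidt--Azumaya decomposition locally finite and uniform). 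The main obstacle I anticipate is precisely this last step: ruling out "exotic" indecomposable summands appearing in infinite products that do not appear in $U_R$ itself, i.e.\ showing the decomposition of $U_R^{\alpha}$ stays within $\operatorname{Add}U_R$; this is where the finite-length hypothesis on $U_R$ (from $R$ right artinian, Corollary~\ref{c:CotiltingBimoduleOneSidedArtinian}) together with the endofiniteness of ${}_SU$ must be used in an essential way, plausibly through the pure-injective hull / elementary-duality machinery of \cite{KS97}.
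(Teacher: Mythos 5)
Your reduction via Remark~\ref{r:FinitenessConditions} (a finitely generated $U_R$ is product complete iff $S$ is left coherent and right perfect and ${}_SU$ is finitely presented) is the right starting point and is exactly the paper's, but you have inverted which ingredient is hard. Right perfectness of $S$, which you treat as ``the only missing ingredient'' and propose to extract from chain conditions transported through the duality, is immediate: $U_R$ has finite length by Corollary~\ref{c:CotiltingBimoduleOneSidedArtinian}, so $S=\End_R(U)$ is semiprimary, hence left and right perfect. The genuine difficulty is the step you dismiss in one clause: ``${}_SU$ is endofinite, hence ${}_SU$ is finitely generated.'' This is a misreading of endofiniteness. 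For the left module ${}_SU$, endofinite means finite length over its \emph{endomorphism} ring $\End_S(U)\cong R$, i.e.\ it restates that $U_R$ has finite length; it says nothing about finite generation of $U$ over $S$. Indeed $U$ need not have finite length over $S$ at all: in the example of Remark~\ref{r:CotiltingBimodulesOverArtinian} with $S=\Lambda=\bigl(\begin{smallmatrix}\R&0\\ \R&\Q\end{smallmatrix}\bigr)$, the ring $S$ is not even left noetherian. Establishing that ${}_SU$ is finitely presented is the entire content of the proposition's proof: one takes a finite copresentation $0\to R_R\to U^n\to M\to0$ (using that $R_R$ is finitely cogenerated), applies $\Delta_R$ to obtain $0\to K\to U\to\Gamma(M)\to0$ with $K$ finitely generated and, by $\De$-reflexivity of $M$, with $\Gamma(M)\in{}^{\bot_0}U$, and then invokes the Reiten--Ringel-type Lemma~\ref{l:ReitenRingel}. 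Your sketch never produces such a presentation.

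Your alternative route fails for the same underlying reason. You claim the decomposition of $U_R^{\alpha}$ into the finitely many indecomposable summands of $U_R$ is ``forced by endofiniteness of ${}_SU$ (finite length over $S$\dots)''---again conflating endofiniteness of ${}_SU$ with finite length of $U$ over $S$, which is false in general here. Moreover, $\Sigma$-pure-injectivity of $U_R$ is not ``already in hand'': the paper never establishes it prior to product completeness (Bazzoni's theorem gives only pure-injectivity, and finite length over $R$ does not by itself yield $\Sigma$-pure-injectivity over the endomorphism-theoretic side where it is needed). Even granting it, the Remark explicitly notes that $\Sigma$-pure-injective does not imply product complete, so the ``exotic summands'' obstacle you flag is real and your proposed resolution rests on the false finiteness assumption. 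In short, both routes presuppose a finiteness of ${}_SU$ over $S$ that is precisely what must be proven (and, in the stronger form you use, is false).
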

\begin{proof}
By Remark~\ref{r:FinitenessConditions}, in order to prove that $U_R$ is product complete it suffices to show that $S$ is right perfect and ${}_SU$ is finitely presented. Since $U_R$ is of finite length, the ring $S$ is semiprimary and so left and right perfect (see \cite[Corollary~28.8]{AF92}). Since $R_R$ is artinian and hence finitely cogenerated, given an inclusion $R_R\lhook\joinrel\to U^\alpha$ we obtain a short exact sequence $0\to R_R\to U^n\to M\to0$ for a suitable finitely generated right $R$-module $M$ and $n\in\N$. By applying $\Delta_R$, the exact sequence $0\to\Delta(M)\to S^n\to U\to\Gamma(M)\to0$ in $S\lMod$ yields a short exact sequence $0\to K\to U\to\Gamma(M)\to0$ with $K$ a finitely generated left $S$-module. Since $M_R$ is $\cal D$-reflexive by Corollary~\ref{c:CotiltingBimoduleOneSidedArtinian}, we have $\Delta\Gamma(M)=0$, i.e.\ $\Gamma(M)\in{}^{\bot_0}U$. Now, by Lemma~\ref{l:ReitenRingel}, we conclude that ${}_SU$ is finitely presented. \qedhere
\end{proof}

Corollary~\ref{c:CotiltingBimoduleOneSidedArtinian} and Proposition~\ref{p:ProductComplete} show that if $R$ is right artinian, the existence of a cotilting bimodule ${}_SU_R$ forces $U_R$ to be finitely generated and product complete. We now state that also the converse holds true.

\begin{thm}
Let $R$ be a right artinian ring and $U_R$ a finitely generated cotilting module with $S=\End_R(U)$. If $U_R$ is product complete, then ${}_SU_R$ is a cotilting bimodule. In particular, $S$ is left coherent and semiprimary, and ${}_SU$ is finitely presented and product complete.
\label{t:ExistenceCotiltingDuality}%
\end{thm}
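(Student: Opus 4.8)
The plan is to show that the faithfully balanced condition and the $1$-cotilting condition for ${}_SU$ both follow from product completeness of $U_R$, exploiting that $R$ is right artinian so that $U_R$ is of finite length and $S=\End_R(U)$ is semiprimary. First, since $U_R$ is finitely generated over a right artinian ring it has finite length, so $S$ is semiprimary hence left and right perfect (see \cite[Corollary~28.8]{AF92}); combined with product completeness, Remark~\ref{r:FinitenessConditions} gives that $S$ is left coherent and ${}_SU$ is finitely presented. It remains to verify that ${}_SU_R$ is a cotilting bimodule, i.e.\ that it is faithfully balanced and that ${}_SU$ is $1$-cotilting. Faithful balancedness amounts to checking that the canonical map $R\to\End_S({}_SU)$ is an isomorphism; I would deduce this from $U_R$ being a finitely generated cotilting module together with a reflexivity argument, or invoke that a product complete finitely generated partial cotilting module is automatically faithfully balanced over its endomorphism ring (this is where the structure of $S$ as a perfect ring, so that ${}_SU$ has a projective cover, enters).

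Next I would establish that ${}_SU$ is $1$-cotilting, using the characterisation recalled in Remark~\ref{r:CotiltingModules}: it suffices to check (i)~$\injdim{}_SU\le1$, (ii)~$\Ext^i_S(U^\alpha,U)=0$ for $i=1$ and all cardinals $\alpha$, and (iii)~${}^{\bot_0}({}_SU)\cap{}^{\bot_1}({}_SU)=\{0\}$. For the injective dimension and the self-orthogonality, the natural strategy is to transport information through $\Delta_R=\Hom_R(-,U)$: since $U_R$ is $1$-cotilting, every finitely generated torsionfree right $R$-module $M$ has $\Delta\Gamma(M)=0$ and $\Gamma(U_R)=0$, and applying $\Delta_R$ to a length-two torsionfree presentation of an arbitrary right $R$-module computes $\Ext$-vanishing on the left. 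For condition (ii) with arbitrary $\alpha$, product completeness is exactly what is needed: $U^\alpha$ lies in $\Add U_R$ (as a product that is also a coproduct summand), so $\Hom_R(-,U)$ and $\Ext^1_R(-,U)$ behave well on it; dually, on the $S$-side ${}_SU^\alpha$ stays finitely controlled. For condition (iii), suppose ${}_SX$ is both torsion and torsionfree; I would apply $\Hom_S(-,U)$ and $\Ext^1_S(-,U)$ and use that $R=\End_S(U)$ together with the already-known good homological behaviour of $U$ on both sides to force $X=0$ — equivalently, that the counit of the derived adjunction kills nothing but zero on the relevant subcategory.

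The main obstacle I expect is the passage from the one-sided hypotheses (product complete $1$-cotilting $U_R$) to full two-sided cotilting-ness, specifically controlling $\Ext^1_S(U^\alpha,U)$ for \emph{infinite} $\alpha$ and the injective dimension of ${}_SU$. The key leverage is that product completeness makes $\Add U_R$ closed under products, which is precisely what lets the vanishing $\Gamma_R(U)=0$ and the finite-length behaviour propagate to arbitrary products and, via the endomorphism ring duality, back to the left side; and that $S$ being perfect ensures flat $=$ projective and the existence of projective covers, so that a minimal left resolution of any finitely presented ${}_SM$ has length controlled by the injective dimension of $U_R$. Once these two facts are in hand, properties (i)--(iii) for ${}_SU$ follow by applying $\Delta_S,\Gamma_S$ and reading off vanishing, and the ``in particular'' clause is then immediate from Remark~\ref{r:FinitenessConditions} and \cite[Corollary~5.4]{CMT10}. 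I would organise the argument as: finite length of $U_R$ $\Rightarrow$ $S$ semiprimary; faithful balancedness; injective dimension bound for ${}_SU$; self-orthogonality for arbitrary products via product completeness; torsion-torsionfree intersection triviality; conclude.
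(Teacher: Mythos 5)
Your outline correctly identifies what must be proved and handles the easy consequences (finite length of $U_R$ gives $S$ semiprimary; product completeness plus Remark~\ref{r:FinitenessConditions} gives $S$ left coherent and ${}_SU$ finitely presented). But the two hard points of the theorem are not actually argued. For faithful balancedness, there is no result you can ``invoke'' saying a product complete finitely generated partial cotilting module is balanced over its endomorphism ring; the paper proves this by building a $U$-coresolution $0\to R_R\to U_0\to U_1\to\cdots$ with $U_i\in\operatorname{add}U_R$ and cokernels in ${}^{\bot_1}U_R$ (possible because $R_R$ is finitely cogenerated by the cotilting module $U_R$) and then applying Wakamatsu's theorem, which simultaneously yields $\Ext^n_S(U,U)=0$ for all $n>0$. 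Your alternative ``reflexivity argument'' is not specified, and perfectness of $S$ alone does not produce it.

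The more serious gap is condition (iii), the triviality of ${}^{\bot_0}({}_SU)\cap{}^{\bot_1}({}_SU)$ for \emph{arbitrary} left $S$-modules: your proposal restates the goal (``force $X=0$'') rather than proving it. This is where essentially all the work of the paper lies. The paper first shows $\Cogen{}_SU\subseteq{}^{\bot_1}({}_SU)$ via reflexivity of finitely presented torsionfree modules \cite[Lemma~1.1, Theorem~1.4]{MT04} and a direct-limit/pure-injectivity argument (this, not membership of $U^\alpha$ in $\Add U_R$, is what gives $\Ext^1_S(U^\alpha,U)=0$; note your appeal to $\Add U_R$ confuses the $R$-side with the $S$-side). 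It then needs Lemma~\ref{l:Vanishing} --- a full page of direct/inverse limit interchange exploiting that $\Delta(K_i)$ has finite length over the right artinian ring $R$ --- to kill finitely generated modules in ${}^{\bot_0}U\cap{}^{\bot_1}U$, and finally Step four, which uses the Crawley-Boevey torsion pair $(\varinjlim\mathcal{T},\varinjlim\mathcal{F})$, Lemma~\ref{l:CogenCondition}, and a pullback/extension argument to reduce arbitrary modules in ${}^{\bot_1}U$ to the finitely generated case. None of this machinery is anticipated in your plan, and without it the passage from the finitely generated case to arbitrary modules does not go through.
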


Altogether, we obtain the announced generalisation of Azumaya's duality.

\begin{cor}
A right artinian ring has a cotiling duality if and only if it admits a finitely generated product complete cotilting module.
\label{c:ExistenceCotiltingDuality}%
\end{cor}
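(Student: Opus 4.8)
The plan is to assemble the corollary from the preparatory results already proven in this section, together with the characterisation of $\De$-reflexive modules over a right artinian ring. Let $R$ be right artinian throughout. I would argue both implications by passing through the intermediate statement: \emph{$R$ has a cotilting duality if and only if $R$ admits a cotilting bimodule ${}_SU_R$} (with $S=\End_R(U)$). The point is that a cotilting duality is, by definition, the duality induced by the right adjunction $\mathbf{R}\Delta_S\colon\De^b(S^{\rm op})\rightleftarrows\De^b(R)\colon\mathbf{R}\Delta_R$ coming from a cotilting bimodule ${}_SU_R$; so "$R$ has a cotilting duality" literally means "$R$ is one side of a cotilting bimodule". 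Hence the content of the corollary is really the equivalence, for a right artinian $R$, between admitting a cotilting bimodule ${}_SU_R$ and admitting a finitely generated product complete cotilting module $U_R$.

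For the forward direction, suppose ${}_SU_R$ is a cotilting bimodule. Then $U_R$ is a cotilting module by definition, it is finitely generated (indeed of finite length) by Corollary~\ref{c:CotiltingBimoduleOneSidedArtinian}, and it is product complete by Proposition~\ref{p:ProductComplete}. For the converse, suppose $U_R$ is a finitely generated product complete cotilting module and set $S=\End_R(U)$. Then Theorem~\ref{t:ExistenceCotiltingDuality} says precisely that ${}_SU_R$ is a cotilting bimodule, so the right adjunction of its derived $\Hom$-functors is a cotilting duality for $R$. Combining the two directions gives the equivalence.

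The writing here is essentially bookkeeping: the substantive work has already been carried out in Corollary~\ref{c:CotiltingBimoduleOneSidedArtinian}, Proposition~\ref{p:ProductComplete} and Theorem~\ref{t:ExistenceCotiltingDuality}. The only mild subtlety—and the one place I would be careful—is to make explicit that "having a cotilting duality" unwinds to "being one side of a cotilting bimodule", so that no new ring-theoretic input is needed beyond the cited results; once that identification is made, the proof is a two-line deduction. I do not expect any real obstacle.

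\begin{proof}
By definition, $R$ has a cotilting duality precisely when there exists a cotilting bimodule ${}_SU_R$, in which case the duality is the one induced by $\mathbf{R}\Delta_S\colon\De^b(S^{\rm op})\rightleftarrows\De^b(R)\colon\mathbf{R}\Delta_R$. So it suffices to show that, for $R$ right artinian, the existence of such a bimodule is equivalent to the existence of a finitely generated product complete cotilting module $U_R$.

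If ${}_SU_R$ is a cotilting bimodule, then $U_R$ is in particular a cotilting module; by Corollary~\ref{c:CotiltingBimoduleOneSidedArtinian} it is of finite length, hence finitely generated, and by Proposition~\ref{p:ProductComplete} it is product complete. Conversely, if $U_R$ is a finitely generated product complete cotilting module and $S=\End_R(U)$, then Theorem~\ref{t:ExistenceCotiltingDuality} yields that ${}_SU_R$ is a cotilting bimodule, so $R$ has a cotilting duality.
\end{proof}
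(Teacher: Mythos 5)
Your proposal is correct and follows exactly the paper's (implicit) argument: the forward direction is Corollary~\ref{c:CotiltingBimoduleOneSidedArtinian} plus Proposition~\ref{p:ProductComplete}, the converse is Theorem~\ref{t:ExistenceCotiltingDuality}, with the observation that ``having a cotilting duality'' unwinds to admitting a cotilting bimodule ${}_SU_R$. Nothing is missing.
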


Concerning the proof of Theorem~\ref{t:ExistenceCotiltingDuality}, we already know by Remark~\ref{r:FinitenessConditions} that the endomorphism ring $S$ is left coherent and ${}_SU$ is finitely presented and product complete. So, we need to prove that ${}_SU_R$ is a faithfully balanced bimodule and that ${}_SU$ is a cotilting module. We gather these proofs in several steps. First, recall that by \cite[Propositions~1.2, 2.2]{AH00} the functor $\Delta_R$ carries finitely generated $R$-modules to finitely presented left $S$-modules. Moreover, the class $\Add U_R$ is covariantly finite, meaning that any right $R$-module admits a minimal left $\Add U$-approximation; in particular, any torsionfree module $M_R$ fits in a short exact sequence $0\to M\to U^{(\alpha)}\to K\to 0$, where $K\in{}^{\bot_1}U_R$. Eventually, throughout the proof unadorned classes and functors such as $\Cogen U$, ${}^{\bot_0}U,{}^{\bot_1}U$, and $\Delta,\Gamma,\Rej_U$ will be meant to be related to ${}_SU$.

\paragraph*{\emph{Step one: the bimodule ${}_SU_R$ is faithful balanced.}}%
Since $R_R$ is finitely cogenerated by $U_R$, it admits a coresolution
\[
	0\longrightarrow R_R\buildrel f_0\over\longrightarrow U_0
	\buildrel f_1\over\longrightarrow U_1
	\buildrel f_2\over\longrightarrow\cdots
\]
where $U_i\in\operatorname{add}U_R$ and $\coker f_i\in{}^{\bot_1}U_R$ ($\operatorname{add}U_R$ is the class of the direct summands of finite coproducts of $U_R$). Thus, by \cite[Proposition~1]{Wak88} we conclude that $U$ is a faithfully balanced $S$-$R$-bimodule and $\Ext^n_S(U,U)=0$ for all $n>0$.

\paragraph*{\emph{Step two: $\Cogen U\subseteq{}^{\bot_1}U$.}}%
Assume $M$ is a torsionfree right $R$-module finitely presented by $U_R$. In particular, $M$ is finitely cogenerated by $U_R$ and so it admits a minimal left $\Add U$-approximation $0\to M\to U^n\to L\to 0$, with $L\in{}^{\bot_1}U_R$. By \cite[Lemma~1.1]{MT04}, we get that $M$ is reflexive and so, by \cite[Theorem~1.4]{MT04}, we obtain that 
\[
	\Cogen U\cap S\lmod \subseteq{}^{\bot_1}U.
\]
Since $U$ is pure-injective, once an arbitrary $N\in\Cogen U$ is expressed as the direct limit of its finitely generated submodules, by the above display $N$ is the direct limit of modules in ${}^{\bot_1}U$ and hence
\[
	\Cogen U\subseteq{}^{\bot_1}U
\]
by pure-injectivity.

\paragraph*{\emph{Step three: the injective dimension of ${}_SU$ is at most~$1$.}}%
Let $0\to K\to S^{(\alpha)}\to N\to0$ be a short exact sequence in $S\lMod$. Since ${}_SS$ is cogenerated by $U$, also $K$ is, so that $\Ext^2_S(N,U)\cong\Ext^1_S(K,U)=0$ by the previous step.

By steps two and three, we get that ${}_SU$ is partial cotilting, so that $({}^{\bot_0}U,\Cogen U)$ is a torsion pair is $S\lMod$ (see \cite[Lemma~2.6]{CTT97}). In order to prove the final step---where we will show that ${}^{\bot_1}U\subseteq\Cogen U$---we need the following lemmata. Recall that we denoted by $\mathcal{F}$ the subcategory of the finitely presented left $S$-modules cogenerated by $U$. Now set $\mathcal{T}$ to be the subcategory of finitely presented left $S$-modules in ${}^{\bot_0}U$.

\begin{lem}
If $M$ is a finitely generated torsionfree left $S$-module, then it is reflexive and finitely presented. Moreover, $(\mathcal{T,F})$ is a torsion pair in the abelian subcategory of the finitely presented left $S$-modules.%
\label{l:TorsionPair}%
\end{lem}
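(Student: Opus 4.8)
The plan is to establish the two assertions of Lemma~\ref{l:TorsionPair} in turn, leaning on the machinery already assembled in Steps one--three. For the first claim, let $M$ be a finitely generated torsionfree left $S$-module. Since ${}_SU$ is finitely presented, $\mathrm{Add}\,U$ (equivalently $\mathrm{add}\,U$ on the finitely generated level) is covariantly finite in $S\lMod$, and $M\in\Cogen U$, so $M$ embeds in a finite power $U^n$; choosing a minimal left $\mathrm{add}\,U$-approximation yields a short exact sequence $0\to M\to U^n\to L\to 0$. By Step two, $\Cogen U\subseteq{}^{\bot_1}U$, hence $L$, being a quotient of $U^n$ by the torsionfree submodule $M$ sitting inside a power of $U$ — more precisely, $L$ lies in $\Cogen U$ as the cokernel of a left $\mathrm{add}\,U$-approximation into a module in $\Cogen U$, or we argue as in Step two directly — is in ${}^{\bot_1}U$. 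Now I would invoke \cite[Lemma~1.1, Theorem~1.4]{MT04} exactly as in Step two, transported to ${}_SU$: the existence of such a presenting sequence with $L\in{}^{\bot_1}U$ forces $M$ to be $U$-reflexive. Finite presentation of $M$ then follows: $\Delta_R$ (here $\Delta_S$) sends finitely generated modules to finitely presented ones by \cite[Propositions~1.2, 2.2]{AH00} applied to ${}_SU$ over $\End_S(U)\cong R^{\mathrm{op}}$ — using the faithful balance from Step one — and reflexivity of $M$ means $M\cong\Delta\Delta(M)$, a double dual of a finitely generated module, hence finitely presented.

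For the torsion pair statement, the ambient category is $\mathrm{mod}\text{-}S^{\mathrm{op}} = S\lmod$, which is abelian precisely because $S$ is left coherent (from Remark~\ref{r:FinitenessConditions}/Step-zero remarks). I would verify the two axioms of a torsion pair $(\mathcal T,\mathcal F)$ there. First, $\Hom_S(T,F)=0$ for $T\in\mathcal T$, $F\in\mathcal F$: this is immediate since $\mathcal F\subseteq\Cogen U$ and $\mathcal T\subseteq{}^{\bot_0}U$, and $\Hom_S(T,C)=0$ for $T\in{}^{\bot_0}U$ forces $\Hom_S(T,-)$ to vanish on any submodule of a power of $U$. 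Conversely, suppose $\Hom_S(A,F)=0$ for every $F\in\mathcal F$, with $A$ finitely presented; I must show $A\in\mathcal T$, i.e.\ $\Hom_S(A,U)=0$. But $U\in\mathcal F$ (it is finitely presented and trivially cogenerated by itself), so this is automatic; symmetrically, if $\Hom_S(T,A)=0$ for all $T\in\mathcal T$, then since $A$ has a canonical sequence $0\to \Rej_U(A)\to A\to A/\Rej_U(A)\to 0$ coming from the torsion pair $({}^{\bot_0}U,\Cogen U)$ of Step three's consequence — and $\Rej_U(A)$ is finitely generated (as $S$ is left coherent and $A$ finitely presented, its torsion submodule is again finitely presented, using that $A/\Rej_U(A)\in\Cogen U$ is finitely presented by the first part of this lemma) — we get $\Rej_U(A)\in\mathcal T$, hence $\Rej_U(A)=0$ and $A\in\Cogen U$; that $A$ is then in $\mathcal F$ uses finite presentation of $A$. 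Crucially, I must check that the restriction of the torsion radical $\Rej_U$ to $S\lmod$ lands inside $S\lmod$, i.e.\ that the torsionfree quotient $A/\Rej_U(A)$ and the torsion part $\Rej_U(A)$ of a finitely presented module are finitely presented — the quotient lies in $\Cogen U\cap S\lmod$ and is reflexive and finitely presented by the first assertion of this very lemma, and then $\Rej_U(A)$ is finitely presented by the kernel characterisation together with left coherence.

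The step I expect to be the genuine obstacle is precisely this closure of the canonical torsion sequence under finite presentation — concretely, showing that $\mathcal F$ is closed under the relevant quotients and that the torsion submodule of a finitely presented module is finitely presented. This is where the hypotheses accumulate: one needs ${}_SU$ finitely presented and product complete (equivalently $S$ left coherent and right perfect, ${}_SU$ finitely presented — Remark~\ref{r:FinitenessConditions}) so that $\Cogen U\cap S\lmod$ is well-behaved, and one needs the first half of the lemma (reflexivity forces finite presentation of torsionfree finitely generated modules) to feed back into the proof of the second half, so the two parts are somewhat entangled and must be proved in the right order: finite presentation of torsionfree finitely generated modules first, then the torsion-pair axioms. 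Everything else reduces to the torsion-pair $({}^{\bot_0}U,\Cogen U)$ already available in $S\lMod$ by \cite[Lemma~2.6]{CTT97} together with the vanishing $\Hom_S({}^{\bot_0}U,\Cogen U)=0$, which is formal.
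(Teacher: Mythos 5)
Your proposal follows essentially the same route as the paper: reflexivity of $M$ via a minimal left $\operatorname{add}U$-approximation and \cite[Lemma~1.1]{MT04}, finite presentation from $M\cong\Delta^{2}(M)$ with $\Delta(M)$ finitely generated (a submodule of the finite-length module $U_R^n$) so that $\Delta^{2}(M)$ is finitely presented, and the torsion pair by restricting the canonical $\Rej_U$-sequence using part one together with left coherence of $S$. One small correction: the cokernel $L$ of the approximation need not lie in $\Cogen U$ (quotients of $U^n$ are not cogenerated by $U$ in general), so the justification that actually works --- and the one the paper uses in Step two --- is Wakamatsu's lemma applied to the minimal left $\Add U$-approximation, which is precisely your stated fallback.
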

\begin{proof}
Let $M$ be a finitely generated torsionfree left $S$-module. By \cite[Lem\-ma~1.1]{MT04} $M$ is reflexive. Since $\Delta(M)$ is finitely cogenerated by $U_R$, it is finitely generated and so $M\cong \Delta^{2}(M)$ is finitely presented.

Consider the short exact sequence $0\to \Rej_U(M)\to M\to M/{\Rej_U(M)}\to 0$, where $M$ is finitely presented. Then $M/{\Rej_U(M)}$ is finitely generated and torsionfree, whence finitely presented by the previous part. Since $S$ is left coherent, $\Rej_U(M)$ is finitely presented. This proves that $(\mathcal{T,F})$ is the torsion pair associated to the restriction of $\Rej_U$ to the finitely presented left $S$-modules. \qedhere
\end{proof}

\begin{lem}
For any finitely generated module $N\in S\lmod$ such that $\Delta(N)=0=\Gamma(N)$, it is $N=0$.%
\label{l:Vanishing}%
\end{lem}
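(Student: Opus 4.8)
The plan is to argue by contradiction: suppose $N\in S\lmod$ is nonzero with $\Delta(N)=0=\Gamma(N)$. Since $N\in{}^{\bot_0}U$ we have $N\in\mathcal{T}$ in the notation preceding Lemma~\ref{l:TorsionPair}, and the idea is to exploit the torsion pair $({}^{\bot_0}U,\Cogen U)$ on $S\lMod$ (established from Steps two and three) together with the fact that $\Delta\Gamma$ of a torsion module must still vanish, in order to produce an infinite strictly descending chain of submodules of $N$—contradicting that $N$, being finitely presented over the left coherent ring $S$ and hence noetherian-like enough for this argument, cannot have such a chain. Concretely, first I would take a projective presentation $S^{(\beta)}\to S^{(\alpha)}\to N\to 0$ with $\alpha$ finite (possible since $N$ is finitely generated), let $K=\ker(S^{(\alpha)}\to N)$, and record the four-term exact sequence obtained by applying $\Delta=\Hom_S(-,U)$:
\[
	0\longrightarrow \Delta(N)\longrightarrow U^\alpha\longrightarrow \Delta(K)\longrightarrow \Gamma(N)\longrightarrow 0 .
\]
By hypothesis $\Delta(N)=0=\Gamma(N)$, so $\Delta(K)\cong U^\alpha$ is a finitely generated free $R$-module.

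Next I would analyse $K$. Applying $\Delta_R$ to $\Delta(K)\cong U^\alpha$ gives $\Delta_R\Delta(K)\cong\Delta_R(U^\alpha)=S^\alpha$, and chasing the evaluation map $\omega_K\colon K\to\Delta^2(K)=\Delta_R\Delta(K)$ (whose kernel is $\Rej_U(K)$ by Remark~\ref{r:CotiltingModules}) one sees $K/\Rej_U(K)$ embeds in a finitely generated free module; using Step two ($\Cogen U\subseteq{}^{\bot_1}U$) and the torsion pair $(\mathcal{T},\mathcal{F})$ of Lemma~\ref{l:TorsionPair} together with the left coherence of $S$, the torsionfree quotient $K/\Rej_U(K)$ is then reflexive and finitely presented, so the cokernel computation $\Gamma(K)$ fits into the derived picture of Proposition~\ref{p:DReflexiveModules}. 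The key observation I want to extract is that $\Delta\Gamma(N)$ and $\Gamma(N)$ being zero, combined with $\Delta(N)=0$, forces—via the diagram of Proposition~\ref{p:DReflexiveModules} applied to $N$ as a left $S$-module—both $\Gamma^2(N)=0$ and $L^0\Delta^2(N)=0$, hence $\mathbf{L}\Delta^2(N)=0$; but then $N$ is $\De$-reflexive with $\what\omega_N$ a quasi-isomorphism onto the zero complex, which gives $N=0$.

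So the cleanest route is really: (1) show directly from $\Delta(N)=\Gamma(N)=0$ that $\mathbf{L}\Delta^2(N)$ is acyclic, invoking Proposition~\ref{p:DReflexiveModules}, which tells us $L^{-1}\Delta^2(N)=\Delta\Gamma(N)$ and gives the exact row $0\to\Gamma^2(N)\to L^0\Delta^2(N)\to\Delta^2(N)\to 0$; since $\Delta^2(N)=\Delta(\Delta(N))=\Delta(0)=0$ and $\Gamma^2(N)=\Gamma(\Gamma(N))=\Gamma(0)=0$, the middle term $L^0\Delta^2(N)$ vanishes, and $\Delta\Gamma(N)=\Delta(0)=0$, so $\mathbf{L}\Delta^2(N)=0$ in $\De^b(S^{\rm op})$. (2) Conclude that $H^0(\what\omega_N)\colon N\to L^0\Delta^2(N)=0$ has image zero; but by the same Proposition this map is surjective with kernel $\Gamma^2(N)=0$, so in fact $H^0(\what\omega_N)$ is an isomorphism $N\xrightarrow{\ \sim\ }0$, giving $N=0$.

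The main obstacle is making sure the diagram of Proposition~\ref{p:DReflexiveModules} is being applied on the correct side—it is stated there for right $R$-modules, and here we need its left-module analogue for ${}_SN$, which is legitimate because ${}_SU$ is itself a cotilting module (Steps two–three, or the hypothesis in the ambient Theorem) so the whole formalism of $L^j\Delta^2$, the torsion radical $\Rej_U$, and the functors $\Delta,\Gamma$ is symmetric in $R$ and $S^{\rm op}$. Once that symmetry is granted, the argument is a two-line diagram chase; the only subtlety worth spelling out is the identification $\Gamma^2(N)=\Gamma(\Gamma(N))$, i.e.\ that the "$\Gamma^2$" appearing in Proposition~\ref{p:DReflexiveModules} genuinely is the composite of the $\Ext^1$-functor with itself evaluated at the module, so that $\Gamma(N)=0$ indeed kills it.
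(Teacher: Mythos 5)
Your ``clean route'' is circular. At the point where Lemma~\ref{l:Vanishing} is invoked we are inside the proof of Theorem~\ref{t:ExistenceCotiltingDuality}: the hypothesis is only that $U_R$ is cotilting on the \emph{right}, and Steps one--three have established merely that ${}_SU$ is \emph{partial} cotilting. The conditions $\Delta(N)=0$ and $\Gamma(N)=0$ say exactly that $N\in{}^{\bot_0}U\cap{}^{\bot_1}U$, so the lemma is the assertion that ${}^{\bot_0}U\cap{}^{\bot_1}U\cap S\lmod=0$ --- a key ingredient of the still-outstanding Step four, i.e.\ of the proof that ${}_SU$ \emph{is} cotilting (cf.\ Remark~\ref{r:CotiltingModules}). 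When you justify applying the left-module analogue of Proposition~\ref{p:DReflexiveModules} ``because ${}_SU$ is itself a cotilting module (Steps two--three, or the hypothesis in the ambient Theorem)'', you are assuming precisely what the lemma is needed to prove; neither the steps nor the hypotheses give it. For a merely partial cotilting module, nonzero modules killed by both $\Hom(-,U)$ and $\Ext^1(-,U)$ do exist in general, which is why the statement is restricted to finitely generated $N$ and requires genuine work. There is also a second, independent failure in your step (2): even granting the proposition on the left, from $\Delta^{2}(N)=\Gamma^2(N)=\Delta\Gamma(N)=0$ you correctly get $\mathbf{L}\Delta^{2}(N)=0$, but you then assert that $H^0(\what\omega_N)$ is ``surjective with kernel $\Gamma^2(N)$''. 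The proposition makes no such claim; it only says that $N$ is $\De$-reflexive \emph{if and only if} $\Delta\Gamma(N)=0$ and $H^0(\what\omega_N)$ is an isomorphism. Here $H^0(\what\omega_N)\colon N\to0$ is the zero map and nothing forces it to be injective, so concluding $N=0$ presupposes the $\De$-reflexivity of $N$, which is again exactly what fails a priori for such $N$.

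The paper's actual argument is of a different and necessarily more delicate nature: it presents $N$ as $S^n/K$, writes $K$ as the direct union of its finitely generated (hence torsionfree, hence reflexive) submodules $K_i$, and uses that $R$ is right artinian so that the inverse system of the $\Delta(C_i)$ inside the finite-length module $\Delta(K)\cong U_R^n$ stabilises; a length comparison of $\Gamma(C_i)$ with $\Gamma(K_{\bar\jmath i})$ then exhibits some $\Gamma(C_{\bar\jmath})$ that is simultaneously torsion and torsionfree for the \emph{right} cotilting module $U_R$, forcing $S^n/K_{\bar\jmath}=0$ and a contradiction. The essential inputs are the right-artinian hypothesis on $R$ and the cotilting property on the side where it is already known; your proposal uses neither.
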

\begin{proof}
Suppose $N\ne0$ and $\Delta(N)=0=\Gamma(N)$, and let us conclude with a contradiction. Let $0\to K\to S^n\to N\to0$ be a short exact sequence in $S\lMod$, $n\in\N$. The module $K$ is the direct union of its finitely generated submodules $K_i$'s, say these latter directed by the poset $(I,\le)$, where $i\le j$ iff $K_i\subseteq K_j$. Then the $S^n\!/K_i$'s form a direct system of modules whose direct limit is $N$ \cite[Lemma~8.7]{Pop73}. Notice that any $K_i$ is finitely generated torsionfree, and so reflexive. For any $i\le j$ in $I$, we have the following commutative diagrams with exact rows:
\begin{gather*}
\xymatrix{%
	0 \ar[r] & K_i \ar@{>->}[d]\ar[r] &
		K \ar@{=}[d]\ar[r] &
			C_i \ar@{->>}[d]\ar[r] & 0 \\
	0 \ar[r] & K_j \ar[r] &
		K \ar[r] &
			C_j \ar[r] & 0
}\cr
\noalign{\hbox{and}}
\xymatrix{%
	0 \ar[r] & C_i \ar@{->>}[d]\ar[r] &
		S^n\!/K_i \ar@{->>}[d]\ar[r] &
			N \ar@{=}[d]\ar[r] & 0 \\
	0 \ar[r] & C_j \ar[r] &
		S^n\!/K_j \ar[r] &
			N \ar[r] & 0
}
\end{gather*}
Now, the $\Delta(C_i)$'s form an inverse system of submodules of the right $R$-module of finite length $\Delta(K)\cong U_R^n$, and, moreover, $\varprojlim \Delta(C_i)\cong\Delta(\varinjlim C_i)=0$. Since $R$ is right artinian, we conclude that there exists $\bar\imath\in I$ such that $\Delta(C_i)=0$ for all $i\ge\bar\imath$. Therefore, we can assume that the $C_i$'s and hence the $S^n\!/K_i$'s eventually belong to ${}^{\bot_0}U$. Notice that $S^n\!/K_i\notin{}^{\bot_1}U$ for all $i\in I$, otherwise from the short exact sequence $0\to K_i\to S^n\to S^n\!/K_i\to0$ we would get $\Delta(K_i)=\Delta(S^n)$, whence $K_i\cong \Delta^{2}(K_i)=\Delta^{2}(S^n)\cong S^n$, and this would imply $S^n\!/K_i=0$, contradiction to $N=\varinjlim S^n\!/K_i$. Moreover, $\Gamma(C_i)\cong\Gamma(S^n\!/K_i)$ is torsion: indeed, $K_i$ and $S^n$ are $\cal D$-reflexive, so $S^n\!/K_i$ is $\cal D$-reflexive and hence $\Delta\Gamma(S^n\!/K_i)=0$.

Now, for each $i\le j\le k$ in $I$, set $K_{ji}=K_j/K_i$ and consider the commutative diagram with exact row
\[
\xymatrix{%
	0 \ar[r] & K_i \ar@{=}[d]\ar[r] &
		K_j \ar@{>->}[d]\ar[r] &
			K_{ji} \ar@{>->}[d]\ar[r] & 0 \\
	0 \ar[r] & K_i \ar[r] &
		K_k \ar[r] &
			K_{ki} \ar[r] & 0
}	
\]
from which, by applying $\Delta$, we obtain in $\rMod R$ the commutative diagram with exact rows
\[
\xymatrix@C.5pc@R1pc{
	0 \ar[rr] && \Delta(K_{ji}) \ar[rr] && \Delta(K_j)
		\ar@{->>}[dr]\ar[rr] && 
		\Delta(K_i) \ar[rr] && \Gamma(K_{ji}) \ar[rr] && 0 \\
	& \hphantom{X_{ij}} && \hphantom{X_{ij}} && X_{ij} 
		\ar@{>->}[ur] && \hphantom{X_{ij}} && \hphantom{X_{ij}} \\
	0 \ar[rr] && \Delta(K_{ki}) \ar[uu]\ar[rr] && \Delta(K_k)
	 \ar[uu]\ar@{->>}[dr]\ar[rr] && 
		\Delta(K_i) \ar@{=}[uu]\ar[rr] && \Gamma(K_{ki})
		\ar@{->>}[uu]\ar[rr] && 0 \\
	&&&&& X_{ik} \ar[uu]
	\ar@{>->}[ur] 
}	
\]
Since $U$ is pure-injective, passing to the inverse limits, we get the exact sequence 
\[
	\varprojlim_{j\in I} X_{ij}\longrightarrow
	\Delta(K_i)\longrightarrow {\varprojlim_{j\in I} \Gamma(K_{ji})}\cong
	\Gamma(\varinjlim_{j\in I} K_{ji})\cong \Gamma(C_i) \longrightarrow 0.
\] 
Since $\Delta(K_i)$ is of finite length, it is $\varprojlim_{j\in I} X_{ij}=\bigcap_{j\in I} X_{ij}=X_{i\bar\jmath}$, for a suitable index $\bar\jmath\in I$, whence $\Gamma(C_i)\cong \Gamma(K_{\bar\jmath i})$. Moreover, since both $C_i$ and $C_{\bar\jmath}$ are in ${}^{\bot_0}U$, from $0\to K_{\bar\jmath i}\to C_i\to C_{\bar\jmath}\to0$ we get the exact sequence
\[
	0\longrightarrow \Delta(K_{\bar\jmath i})\longrightarrow \Gamma(C_{\bar\jmath})\longrightarrow
	\Gamma(C_i)\longrightarrow\Gamma(K_{\bar\jmath i})\longrightarrow0.
\]
The modules $\Gamma(C_i)$ and $\Gamma(K_{\bar\jmath i})$ have the same finite length, for they are isomorphic factors of the module of finite length $\Delta(K_i)$, so in turn $\Delta(K_{\bar\jmath i})\cong \Gamma(C_{\bar\jmath})$ by exactness. Hence, $\Gamma(C_{\bar\jmath})$ is torsionfree. Finally, since $\Gamma(C_{\bar\jmath})\cong\Gamma(S^n\!/K_{\bar\jmath})$ is torsion and $U_R$ is cotilting, then $\Gamma(S^n\!/K_{\bar\jmath})=0$, i.e.\ $S^n\!/K_{\bar\jmath}=0$. By the previous argument this is a contradiction. \qedhere
\end{proof}

\begin{lem}
Let $(Y_i)_{i\in I}$ be a monomorphic direct system of left $S$-modules in $\mathcal{F}$ such that $Y_j/Y_i\in\mathcal{T}$, for any $i\le j$. Then $\varinjlim_I Y_i\in\Cogen U$.
\end{lem}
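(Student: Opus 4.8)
The plan is to verify directly that $\Rej_U(Y)=0$ for $Y=\varinjlim_I Y_i$, which by Remark~\ref{r:CotiltingModules} is exactly the condition $Y\in\Cogen U$; concretely, I must produce, for each nonzero $y\in Y$, an $f\in\Hom_S(Y,U)$ with $f(y)\ne0$. The whole point is that, after applying $\Delta=\Hom_S(-,U)$, the given direct system of monomorphisms becomes an inverse system of \emph{finite length} right $R$-modules with \emph{injective} transition maps, and such a system is automatically Mittag--Leffler.

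First I would record two facts. Since each $Y_i\in\mathcal{F}$ is finitely generated over $S$ and $U_R$ is a finitely generated module over the right artinian ring $R$, hence of finite length, $\Delta(Y_i)$ embeds into a finite power of $U_R$ and so has finite length over $R$. Moreover, for $i\le j$ the hypothesis $Y_j/Y_i\in\mathcal{T}\subseteq{}^{\bot_0}U$ gives $\Delta(Y_j/Y_i)=0$, so applying $\Delta$ to $0\to Y_i\to Y_j\to Y_j/Y_i\to0$ shows that the transition map $\Delta(Y_j)\to\Delta(Y_i)$ is injective. Thus $(\Delta(Y_i))_{i\in I}$ is an inverse system of finite length $R$-modules with injective transition maps, and $\Delta(Y)=\Hom_S(\varinjlim_I Y_i,U)\cong\varprojlim_I\Delta(Y_i)$ canonically and compatibly with the projections to each $\Delta(Y_i)$.

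Next, fix $i_0\in I$ and $0\ne y\in Y_{i_0}$; since the transition maps are monomorphisms, $Y_{i_0}$ embeds into $Y$ and this captures every nonzero element of $Y$. As $\Delta(Y_{i_0})$ has finite length, the downward directed family of submodules $\{\,\im(\Delta(Y_j)\to\Delta(Y_{i_0}))\,\}_{j\ge i_0}$ has a least member, attained at some $j_0\ge i_0$. Injectivity of the transition maps makes the system Mittag--Leffler, and a standard lifting argument then identifies $\im(\varprojlim_I\Delta(Y_i)\to\Delta(Y_{i_0}))$ with $\im(\Delta(Y_{j_0})\to\Delta(Y_{i_0}))$; equivalently, for every $h\colon Y_{j_0}\to U$ the composite $Y_{i_0}\hookrightarrow Y_{j_0}\xrightarrow{h}U$ extends to an $S$-homomorphism $Y\to U$. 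Now $Y_{j_0}\in\mathcal{F}\subseteq\Cogen U$ and $y\ne0$ in $Y_{j_0}$, so there is $h\colon Y_{j_0}\to U$ with $h(y)\ne0$; by the previous sentence $h|_{Y_{i_0}}$ extends to some $f\colon Y\to U$, whence $f(y)=h(y)\ne0$. Thus $y\notin\Rej_U(Y)$, and since $y$ was arbitrary, $\Rej_U(Y)=0$, i.e.\ $Y\in\Cogen U$.

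The main obstacle is precisely the step just carried out. A naive attempt to extend a single homomorphism $Y_{i_0}\to U$ successively along the inclusions $Y_i\hookrightarrow Y_j$ stalls on the obstruction classes in $\Ext^1_S(Y_j/Y_i,U)$, which cannot be annihilated term by term---we only know $Y_j/Y_i\in{}^{\bot_0}U$, not that it lies in ${}^{\bot_1}U$. The remedy is to move far enough into the system, to the index $j_0$ where the image of $\Delta$ inside $\Delta(Y_{i_0})$ has already stabilised, and to build the separating homomorphism on $Y_{j_0}$ rather than on $Y_{i_0}$; it is exactly the finiteness of length of the $\Delta(Y_i)$---that is, the hypothesis that $R$ is right artinian---that forces this stabilisation, hence the Mittag--Leffler property on which the argument rests.
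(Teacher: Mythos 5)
Your proof is correct, and it rests on the same key observation as the paper's: applying $\Delta$ turns the hypothesis $Y_j/Y_i\in\mathcal{T}\subseteq{}^{\bot_0}U$ into a monomorphic inverse system $(\Delta(Y_i))_I$ of finite length right $R$-modules, which must stabilise because $R$ is right artinian. Where the two arguments part ways is in how they exploit the stabilisation. The paper invokes the reflexivity of the modules in $\mathcal{F}$ (Lemma~\ref{l:TorsionPair}): once $\Delta(Y_i)=\Delta(Y_{\bar\jmath})$ for $i\ge\bar\jmath$, applying $\Delta$ again and using $Y_i\cong\Delta^{2}(Y_i)$ shows the original direct system is eventually constant, so $\varinjlim_I Y_i=Y_{\bar\jmath}$ actually lies in $\mathcal{F}$ --- a stronger conclusion than the one stated. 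You instead stay on the $\Cogen U$ side and verify $\Rej_U(Y)=0$ directly, using that the injectivity of the transition maps identifies $\im\bigl(\varprojlim_I\Delta(Y_i)\to\Delta(Y_{i_0})\bigr)$ with the stable image $\im\bigl(\Delta(Y_{j_0})\to\Delta(Y_{i_0})\bigr)$, so that a separating map defined on $Y_{j_0}$ restricts to one on $Y_{i_0}$ that genuinely extends to $Y$. Your route avoids the reflexivity input (and your closing remark correctly diagnoses why a naive term-by-term extension fails, the obstruction living in $\Ext^1_S(Y_j/Y_i,U)$), at the price of the Mittag--Leffler bookkeeping and of proving only membership in $\Cogen U$ rather than the stabilisation of the system itself; both conclusions suffice for the use made of the lemma in Lemma~\ref{l:CogenCondition}.
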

\begin{proof}
By hypothesis on the $Y_j/Y_i$'s, applying $\Delta$ we obtain an monomorphic inverse system $(\Delta(Y_i))_{i\in I}$ in $\rMod R$, whose inverse limit is $\bigcap_{i\in I}\Delta(Y_i)$. Since $R$ is right artinian and the $\Delta(Y_i)$'s are modules of finite length, there exists $\bar\jmath\in I$ such that $\Delta(Y_i)=\Delta(Y_{\bar\jmath})$ for all $i\ge\bar\jmath$. Since each $Y_i$ is reflexive by Lemma~\ref{l:TorsionPair}, we conclude $\varinjlim_I Y_i=Y_{\bar\jmath}\in\Cogen U$. \qedhere
\end{proof}

\begin{lem}
Given a short exact sequence $0\to Y_0\to Y\to X\to0$ in $S\lMod$, with $Y_0\in\mathcal{F}$, $Y\in\varinjlim\mathcal{F}$ and $X\in\varinjlim\mathcal{T}$, then $Y\in\Cogen U$.%
\label{l:CogenCondition}%
\end{lem}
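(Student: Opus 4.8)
The goal is to show $Y\in\Cogen U$, equivalently that the torsion submodule $W:=\Rej_U(Y)$ vanishes. Since $Y_0\in\mathcal{F}\subseteq\Cogen U$ is torsionfree we have $W\cap Y_0=0$, so $W$ embeds into $X=Y/Y_0$; thus all the torsion of $Y$ is confined to $X\in\varinjlim\mathcal{T}$, and this is exactly what the proof must exploit. The plan is to reduce to the preceding lemma, that is, to realise $Y$ as the colimit of a \emph{monomorphic} direct system of modules in $\mathcal{F}$ whose successive quotients lie in $\mathcal{T}$. Two structural facts are used throughout: a finitely generated submodule of a module in $\mathcal{F}$ again lies in $\mathcal{F}$ (it is torsionfree because $\Cogen U$ is closed under submodules, and finitely presented because $S$ is left coherent); and a finitely generated submodule of a finitely presented module is finitely presented, so that $\mathcal{T}$ is closed under taking quotients and submodules inside the finitely presented modules (using also Lemma~\ref{l:TorsionPair}).

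Concretely, fix presentations $Y=\varinjlim_k F_k$ with $F_k\in\mathcal{F}$ and $X=\varinjlim_l T_l$ with $T_l\in\mathcal{T}$. Since $Y_0$ is finitely presented, after discarding small indices we may assume compatible embeddings $Y_0\hookrightarrow F_k$ lifting $Y_0\hookrightarrow Y$, so the transition maps restrict to the identity on $Y_0$; then $X=\varinjlim_k F_k/Y_0$. Interleaving the two presentations of $X$ by the usual cofinality argument gives a direct system $\cdots\to F_k/Y_0\to T_k'\to F_{k+1}/Y_0\to\cdots$ with colimit $X$ and all intermediate terms in $\mathcal{T}$. Pulling the maps $T_k'\to F_{k+1}/Y_0$ back along $F_{k+1}\twoheadrightarrow F_{k+1}/Y_0$ produces submodules $G_k\subseteq F_{k+1}$ with $Y_0\subseteq G_k$ and $G_k/Y_0$ equal to the image of $T_k'$ in $F_{k+1}/Y_0$. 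Then $G_k/Y_0$ is a finitely generated, hence finitely presented, submodule of $F_{k+1}/Y_0$ which is torsion as a quotient of $T_k'$, so $G_k/Y_0\in\mathcal{T}$; and $G_k$, being a finitely generated submodule of $F_{k+1}\in\mathcal{F}$, lies in $\mathcal{F}$. By construction the maps $F_k\to F_{k+1}$ factor through $G_k$ and the maps $G_k\hookrightarrow F_{k+1}\to F_{k+2}$ factor through $G_{k+1}$, so the $G_k$ form a cofinal subsystem with $\varinjlim_k G_k=Y$, and $G_j/G_i\cong(G_j/Y_0)/(G_i/Y_0)\in\mathcal{T}$ for $i\le j$.

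The main obstacle is that the system $(G_k)$ need not be monomorphic, since the presentation $Y=\varinjlim F_k$ has non-injective transition maps in general. I would repair this by passing to the images $\widehat{G}_k$ of the $G_k$ in $Y$: these form a genuine chain of finitely presented submodules of $Y$ with union $Y$, still containing $Y_0$, with $\widehat{G}_k/Y_0$ a finitely presented torsion module. The crux is then to verify $\widehat{G}_k\in\mathcal{F}$, i.e.\ $\Rej_U(\widehat{G}_k)=0$: this torsion submodule is finitely generated (a submodule of the finitely presented $\widehat{G}_k$, $S$ being coherent) and satisfies $\Delta(\Rej_U(\widehat{G}_k))=0$, so by Lemma~\ref{l:Vanishing} it suffices to show $\Gamma(\Rej_U(\widehat{G}_k))=0$; this should follow from the long exact $\Hom_S(-,U)$-sequences of $0\to Y_0\to\widehat{G}_k\to\widehat{G}_k/Y_0\to0$ and of the torsion sequence of $\widehat{G}_k$, together with $\Gamma(Y_0)=0$ (Step two) and the finite length of the right $R$-modules occurring, since $R$ is right artinian, in the spirit of the stabilisation argument used in the preceding lemmas. (Alternatively, one may attempt to show directly that the torsion submodule $W=\Rej_U(Y)$ is finitely generated with $\Gamma(W)=0$ and conclude by Lemma~\ref{l:Vanishing}.) Granting this, $(\widehat{G}_k)$ is a monomorphic direct system in $\mathcal{F}$ with successive quotients in $\mathcal{T}$ and colimit $Y$, so the preceding lemma yields $Y\in\Cogen U$.
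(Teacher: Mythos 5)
Your overall strategy is the same as the paper's: build a monomorphic direct system of finitely generated submodules of $Y$ containing $Y_0$, with successive quotients in $\mathcal{T}$, and invoke the preceding lemma. (The paper gets such a system more directly, by pulling back along $Y\to X$ the directed family of finitely generated torsion submodules of $X$, which is cofinal because $(\varinjlim\mathcal{T},\varinjlim\mathcal{F})$ is a torsion pair; your interleaving of presentations followed by passing to images $\what{G}_k$ lands in essentially the same place, at the cost of extra bookkeeping.) The problem is that you stop exactly at the one point that needs an argument: showing that the finitely generated submodules $\what{G}_k\le Y$ lie in $\mathcal{F}$, i.e.\ are torsionfree. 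Your sketch of this step is not a proof. First, it is circular: you argue that $\Rej_U(\what{G}_k)$ is finitely generated because $\what{G}_k$ is finitely presented, but finite presentation of $\what{G}_k$ is precisely what would follow (via Lemma~\ref{l:TorsionPair}) from the torsionfreeness you are trying to establish; and even granting finite presentation, left coherence only makes \emph{finitely generated} submodules finitely presented---it does not make arbitrary submodules such as the reject finitely generated. Second, the mechanism you invoke does not deliver the vanishing: the long exact $\Hom_S(-,U)$-sequences of $0\to Y_0\to\what{G}_k\to\what{G}_k/Y_0\to0$ and of the torsion sequence only exhibit $\Gamma(\Rej_U(\what{G}_k))$ as a quotient of $\Gamma(\what{G}_k/Y_0)$, which has no reason to vanish, and ``in the spirit of the stabilisation argument'' is not a substitute for carrying one out. (A smaller symptom of the same issue: your opening claim $\Rej_U(Y)\cap Y_0=0$ is unjustified, since ${}^{\bot_0}U$ is not closed under submodules---the torsion pair $({}^{\bot_0}U,\Cogen U)$ is not hereditary; this is only motivational in your text, but it signals where the difficulty lies.)

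The gap can be closed, but by a different argument from the one you gesture at. Since ${}_SU$ is pure-injective, ${}^{\bot_1}U$ is closed under direct limits, and since $\injdim{}_SU\le1$ it is closed under submodules; hence $Y\in\varinjlim\mathcal{F}\subseteq{}^{\bot_1}U$ and every finitely generated submodule $\what{G}_k$ of $Y$ lies in ${}^{\bot_1}U$. One then shows that a finitely generated module $N\in{}^{\bot_1}U$ is reflexive, torsionfree and finitely presented by comparing a free presentation $0\to K\to S^n\to N\to0$ with its double dual: $\Delta(N)$ embeds in $U^n$, so $\Delta^{2}(N)$ and $\Delta^{2}(K)$ are finitely presented torsionfree, $\omega_N$ is epic, and $\ker\omega_N$ is finitely generated and lies in ${}^{\bot_0}U\cap{}^{\bot_1}U$, hence vanishes by Lemma~\ref{l:Vanishing}. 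This is precisely the argument the paper runs at the start of its Step four, and it (rather than a stabilisation of long exact sequences) is what legitimises putting your $\what{G}_k$'s in $\mathcal{F}$; without it, your reduction to the preceding lemma does not go through.
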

\begin{proof}
Write $X=\varinjlim_I X_i$ for the monomorphic direct system of its finitely generated submodules. Since $({}^{\bot_0}U,\Cogen U)$ is a torsion pair in $S\lMod$ and $\Hom_S(X,\varinjlim\mathcal{F})=0$, with a usual argument we may assume that actually $X_i$ is in ${}^{\bot_0}U$. Now, each $X_i$ is of the form $Y_i/Y_0$ for some $Y_i\in\mathcal{F}$ containing $Y_0$, hence we obtain the monomorphic direct system of short exact sequence $0\to Y_0\to Y_i\to X_i\to0$, in which $Y_j/Y_i\cong X_j/X_i\in\mathcal{T}$ for all $i\le j$. By the previous lemma, $Y=\varinjlim_I Y_i\in\Cogen U$. \qedhere
\end{proof}

We are now ready to conclude with the final step.

\subparagraph*{\emph{Step four: ${}^{\bot_1}U\subseteq\Cogen U$.}}%
First let us show that the inclusion holds for finitely generated modules. Let $0\to K\to S^n\to N\to0$ be a short exact sequence in $S\lMod$ with $\Gamma(N)=0$. Then $0\to\Delta(N)\to U^n_R\to \Delta(K)\to0$ is a short exact sequence of finitely generated torsionfree right $R$-modules, hence $\Delta^{2}(N)$ and $\Delta^{2}(K)$ are finitely presented modules in $\Cogen U\subseteq{}^{\bot_1}U$. Therefore, in the commutative diagram with exact rows
\[
\xymatrix{%
	0 \ar[r] & K \ar[d]_{\omega_K} \ar[r] & S^n \ar[d]^\cong \ar[r] &
		N \ar[d]^{\omega_N} \ar[r] & 0 \\
	0 \ar[r] & \Delta^{2}(K) \ar[r] & \Delta^{2}(S^n) \ar[r] & \Delta^{2}(N) \ar[r] & 0
}	
\]
we have that $\omega_N$ is epic, and $\ker\omega_N\cong\coker\omega_K$ is finitely generated. Moreover, since $\Delta^{2}(N)\in{}^{\bot_1}U$, we have $\ker\omega_N\in{}^{\bot_0}U\cap{}^{\bot_1}U$; that is, $\Rej_U(N)=0$, by Lemma~\ref{l:Vanishing}, so that $N$ is reflexive and, in particular, torsionfree and finitely presented. Observe that we proved a stronger claim than the stated one, indeed we showed that any finitely module in ${}^{\bot_1}U$ is finitely presented and torsionfree. Recall that, by \cite[(4.4), p.~1666]{CB94}, $(\varinjlim\mathcal{T},\varinjlim\mathcal{F})$ is a torsion pair in $S\lMod$. In particular, we have $\varinjlim\mathcal{T}\subseteq{}^{\bot_0}U$ and $\varinjlim\mathcal{F}={}^{\bot_1}U$ (the latter equality follows since $U$ is pure-injective and ${}^{\bot_1}U$ is closed under submodules). We shall conclude by proving that $\varinjlim\mathcal{F}\subseteq\Cogen U$. First, assume that $N\in\varinjlim\mathcal{F}\cap{}^{\bot_0}U$. For any fixed $0\ne x\in N$, we have the pullback diagram
\[
\xymatrix{%
	0 \ar[r] & Sx \ar@{=}[d]\ar[r] & Q \ar@{>->}[d]\ar[r]\ar@{}[dr]|-{\textstyle\lrcorner} &
		t(X) \ar@{>->}[d]\ar[r] & 0 \\
	0 \ar[r] & Sx \ar[r] &
		N \ar[r]\ar@{->>}[d] & X \ar[r]\ar@{->>}[d] & 0 \\
	&& Y \ar@{=}[r] & Y
}	
\]
where $t$ is the torsion radical associated with the torsion class $\varinjlim\mathcal{T}$, so that $Y\in\varinjlim\mathcal{F}$. Since $Sx\in\mathcal{F}$, $Q\in\varinjlim\mathcal{F}$ and $t(X)\in\varinjlim\mathcal{T}$, Lemma~\ref{l:CogenCondition} applies on the first row so that $Q\in\Cogen U$. In particular, there exists an $S$-linear map $f\colon Q\to U$ such that $f(x)\ne0$, and since $\Gamma(Y)=0$, it can be extended to a nonzero map $\bar f\colon N\to U$ with $\bar f(x)\ne0$. Therefore, $N\in\Cogen U$, whence $N=0$. Finally, assume $N\in\varinjlim\mathcal{F}$, and consider the short exact sequence $0\to\Rej_U(N)\to N\to N/{\Rej_U(N)}\to0$. We have $\Rej_U(N)\in\varinjlim\mathcal{F}\cap{}^{\bot_0}U$, so by the previous argument we get $\Rej_U(N)=0$ and thus $N\in\Cogen U$. \qedhere

\section{\texorpdfstring{$\cal D$}{D}-Reflexive Modules and Artinian Rings}%
\label{S:DReflexiveModules}%

As recalled in Remark~\ref{r:CotiltingBimodulesOverArtinian}, the main difference between the Morita and the cotilting settings is that, in the latter, over a right artinian ring $R$ the endomorphism ring of a finitely generated product complete cotilting module $U_R$ needs not to be left artinian. We now characterise the chain conditions of these endomorphism rings.

\begin{prop}
Let $R$ be a right artinian ring with a cotilting duality induced by $U_R$, and let $S=\End_R(U)$. The following are equivalent:
\begin{enumerate}
\item[(a)] $S$ is left artinian;

\item[(b)] $S$ is left noetherian;

\item[(c)] $\Delta\Gamma(N)=0$ for every finitely generated left $S$-module $N$.
\end{enumerate}
\end{prop}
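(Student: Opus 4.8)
The plan is to prove the cycle of implications $(a)\Rightarrow(b)\Rightarrow(c)\Rightarrow(a)$, exploiting that $R$ right artinian already gives us (via Theorem~\ref{t:ExistenceCotiltingDuality} and Remark~\ref{r:FinitenessConditions}) that $S$ is semiprimary, left coherent, and that ${}_SU$ is finitely presented, product complete, and endofinite. The implication $(a)\Rightarrow(b)$ is trivial. For $(b)\Rightarrow(c)$: since $S$ is left noetherian, every finitely generated left $S$-module $N$ is finitely presented, and an endofinite module over a left noetherian ring is precisely one whose endomorphism ring is left artinian — but more directly, a left noetherian semiprimary ring is left artinian, so by Corollary~\ref{c:CotiltingBimoduleOneSidedArtinian} applied on the $S$-side (with the roles of the two rings swapped, using that ${}_SU_R$ is a cotilting bimodule), every finitely generated left $S$-module is $\mathcal{D}$-reflexive, and by Proposition~\ref{p:DReflexiveModules} this forces $\Delta\Gamma(N)=0$.

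The substantive implication is $(c)\Rightarrow(a)$, and this is where I expect the main obstacle to lie. The hypothesis $(c)$ says that $\Delta\Gamma(N)=0$ for all finitely generated ${}_SN$; combined with Lemma~\ref{l:TorsionPair} (which already gives $\Rej_U(N)$ finitely presented for finitely generated $N$, hence $H^0(\what\omega_N)$ behaves well) and Proposition~\ref{p:DReflexiveModules}, one should deduce that every finitely generated left $S$-module is $\mathcal{D}$-reflexive, i.e. ${}_S\mathcal{M}\supseteq S\lmod$. The goal is then to bootstrap this into a chain condition. First I would show that $\Delta_S$ sends finitely generated left $S$-modules to finitely generated (indeed finite length) right $R$-modules: this follows from \cite[Propositions~1.2,~2.2]{AH00} cited in the proof sketch of Theorem~\ref{t:ExistenceCotiltingDuality}, together with $U_R$ having finite length. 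Dually, $\Gamma_{\!S}(N)=\Ext^1_S(N,U)$ of a finitely generated module is finitely generated because $S$ is left coherent and ${}_SU$ is finitely presented, so a finite presentation of $N$ yields a right $R$-module sequence with finite-length terms, whence $\Gamma(N)$ has finite length.

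Given all the torsion/cotorsion data is finite length on the $R$-side, the strategy is to run the duality: for a finitely generated ${}_SN$ with $\Gamma(N)=0$ we showed in Step four of Theorem~\ref{t:ExistenceCotiltingDuality} that $N$ is reflexive and finitely presented, with $N\cong\Delta^2(N)$ and $\Delta(N)$ of finite length over $R$. For general finitely generated $N$, the exact sequence $0\to\Rej_U(N)\to N\to N/\Rej_U(N)\to 0$ plus $(c)$ and Proposition~\ref{p:DReflexiveModules} give $N$ $\mathcal{D}$-reflexive, hence $\mathbf{L}\Delta^2(N)$ is the stalk $\Delta^2(N)\cong N$, so $N\hookrightarrow\Delta(\Delta(N))$ exhibits $N$ as a submodule of $\Hom_R(\Delta(N),U)$; since $\Delta(N)$ has finite length over $R$ and $U_R$ has finite length, $\Hom_R(\Delta(N),U)$ has finite length over $S$ (being a $\Hom$ between finite-length modules, which is finite-dimensional over the common finite-length structure). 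Therefore every finitely generated left $S$-module has finite length, which means ${}_SS$ is noetherian and artinian at once; equivalently $S$ is left artinian. The delicate point to get right is the claim that $\Hom_R(X,U)$ has \emph{finite length as a left $S$-module} when ${}_RX$ has finite length: this needs that $S$ acts through its action on $U$ and that finite-length-ness is inherited, which I would argue by induction on the length of $X$ using that $\Delta_R$ is left exact and $\Delta_R(\text{simple})$ embeds in $\Delta_R(R/J)\cong$ a finite-length $S$-module, invoking $U_R$ of finite length and $S$ semiprimary once more.
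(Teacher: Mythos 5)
Your treatment of ${\rm(a)\Leftrightarrow(b)}$ and ${\rm(b)\Rightarrow(c)}$ is essentially the paper's (semiprimarity of $S$, then Corollary~\ref{c:CotiltingBimoduleOneSidedArtinian} on the $S$-side), and is fine. The implication ${\rm(c)\Rightarrow(a)}$, however, contains two genuine gaps. The first: you assert that (c), Lemma~\ref{l:TorsionPair} and Proposition~\ref{p:DReflexiveModules} together yield that every finitely generated left $S$-module is $\cal D$-reflexive. But Proposition~\ref{p:DReflexiveModules} requires \emph{two} conditions: $\Delta\Gamma(N)=0$ (which (c) supplies) \emph{and} that $H^0(\what\omega_N)$ be an isomorphism. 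Establishing the latter is the hard part and is where the paper spends most of its effort: for finitely generated torsion $N$ one compares $N$ with $\Gamma^2(N)$, applies (c) again to the finitely generated module $\Gamma^2(N)$ to see that $\Gamma^3(N)$ is torsion, and then uses that $\omega_{\Delta(K)}$ is a split monomorphism to conclude that its cokernel is simultaneously torsion and torsionfree, hence zero. None of this follows formally from (c).

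The second gap is fatal as stated: the claim that $\Hom_R(X,U)$ has finite length as a left $S$-module whenever $X_R$ has finite length (``being a Hom between finite-length modules'') is false, and is essentially equivalent to the conclusion. Take $X=U_R$, which has finite length by Corollary~\ref{c:CotiltingBimoduleOneSidedArtinian}: then $\Hom_R(U,U)=S$, which has finite length over $S$ precisely when $S$ is left artinian---the very thing to be proved. Concretely, for $\Lambda=\bigl(\!\begin{smallmatrix}\R&0\cr\R&\Q\end{smallmatrix}\!\bigr)$ with $U={}_\Lambda\Lambda_\Lambda$, the module $X=\Lambda_\Lambda$ has finite length but $\Hom_\Lambda(\Lambda,\Lambda)={}_\Lambda\Lambda$ is not even noetherian. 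Your proposed induction on the length of $X$ does not repair this: already the base case, that $\Hom_R(T,U)$ has finite length over $S$ for $T$ simple, is exactly as unjustified (it asks that a piece of the socle of $U_R$ be of finite length over $S$). The paper avoids finite-length arguments on the $S$-side altogether: it proves that the $\cal D$-reflexive left $S$-modules are \emph{exactly} the finitely generated ones---the inclusion ``$\cal D$-reflexive $\Rightarrow$ finitely generated'' uses that $\mathbf{R}\Delta_S(N)$ has finitely generated cohomology over the right artinian ring $R$, hence is quasi-isomorphic to a complex of finitely generated projectives, so that $N\cong R^0\Delta^{2}(N)$ is finitely generated---and then deduces left noetherianity from thickness of the $\cal D$-reflexive class applied to $0\to I\to{}_SS\to N\to0$; left artinianity follows from semiprimarity. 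You would need to supply both halves of that equivalence for your argument to close.
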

\begin{proof}
${\rm(a)\Leftrightarrow(b)}$ The ring $S$ is semiprimary being the endomorphism ring of the finite length module $U_R$. Then $S$ is left artinian if and only if it is left noetherian.

\noindent${\rm(b)\Rightarrow(c)}$ Let $N$ be a finitely generated left $S$-module. Since $N$ is finitely presented, it is $\cal D$-reflexive by \cite[Lemma~3.1\,(1,c)]{MT14}, hence in particular $\Delta\Gamma(N)=0$ (see Proposition~\ref{p:DReflexiveModules}).

\noindent${\rm(c)\Rightarrow(b)}$ The implication follows once we prove that the finitely generated left $S$-modules are precisely the $\cal D$-reflexive ones. Indeed, given a short exact sequence $0\to I\to {}_SS\to N\to0$ in $S\lMod$, $N$ is finitely generated whence $\cal D$-reflexive by our claim; by thickness of the $\cal D$-reflexives, $I$ is is $\cal D$-reflexive whence finitely generated.

So, let us prove our claim. First notice that we are in the setting of \cite[Propositions~1.2, 2.2]{AH00}, so that the functors $\Delta$'s map finitely generated modules in finitely presented ones. Assume now that $N$ is a $\cal D$-reflexive module; the complex $\mathbf{R}\Delta_S(N)$ in $\cal D^b(R)$ is $\cal D$-reflexive, so it has $\cal D$-reflexive cohomologies. Since $R$ is right artinian, its cohomologies are finitely generated (\cite[Proposition~3.6]{MT14}), so it is quasi-isomorphic to a complex with finitely generated projective terms (see for instance \cite[Lemma~4.6]{Har66}). Then $\mathbf{R}\Delta^{2}(N)$ is a complex of finitely presented modules, whence $N\cong R^0\Delta^{2}(N)$ is finitely generated.

Conversely, let $N$ be a finitely generated $S$-module and let $0\to K\to S^n\to N\to 0$ be a $\Delta_S$-acyclic presentation of $N$. If $N$ is torsionfree, then it is $\cal D$-reflexive by \cite[Lemma~3.1\,(1,a)]{MT14}. If $N$ is torsion, by hypothesis also $\Gamma(N)$ is torsion, and we have the commutative diagram with exact rows
\[
\xymatrix{%
	0 \ar[r] & K \ar[d]_{\omega_K} \ar[r] & S^n\ar[d]^\cong \ar[r] &
		N \ar[d]^{H^0(\what\omega_N)}\ar[r] & 0 \\
	0 \ar[r] & \Delta^{2}(K) \ar[r] & \Delta^{2}(S^n) \ar[r] & \Gamma^2(N) \ar[r] & 0
}	
\]
where $H^0(\what\omega_N)$ is epic and therefore also $\Gamma^2(N)$ is torsion and finitely generated. By hypothesis we have $\Delta\Gamma(\Gamma^2(N))=0$, i.e.\ $\Gamma^3(N)$ is torsion as well. Moreover, from the following commutative diagram with exact rows,
\[
\xymatrix{%
	0 \ar[r] & U^n_R \ar[d]_\cong \ar[r] & \Delta(K)\ar@{>->}[d]^{\omega_{\Delta(K)}} \ar[r] &
		\Gamma(N) \ar[d]^{H^0(\what\omega_{\Gamma(N)})} \ar[r] & 0 \\
	0 \ar[r] & \Delta^{2}(U^n) \ar[r] & \Delta^{3}(K) \ar[r] & \Gamma^3(N) \ar[r] & 0
}	
\]
where the first one is a $\Delta_R$-acyclic presentation of $\Gamma(N)$, we obtain that $H^0(\what\omega_{\Gamma(N)})$ is monic and $\coker\omega_{\Delta(K)}\cong\coker H^0(\what\omega_{\Gamma(N)})$. Notice that $\omega_{\Delta(K)}$ is a split monomorphism, so in the previous isomorphism the first term is torsionfree while the second is torsion. Hence we get that they are both zero and in particular $\omega_{\Delta(K)}$ is an isomorphism. By adjunction it follows that also $\omega_K$ is an isomorphism, hence $K$ is $\cal D$-reflexive. Since the subcategory of $\cal D$-reflexive modules is thick, also $N$ is $\cal D$-reflexive. Finally, let $N$ be an arbitrary finitely generated left $S$-module. Then $\im\omega_N\cong N/{\Rej_U(N)}$ is finitely generated torsionfree, hence $\cal D$-reflexive by the previous argument. Thus, $\im\omega_N$ is a finitely generated submodule of the finitely presented module $\Delta^{2}(N)$, hence it is finitely presented as well. It follows that $\Rej_U(N)$ is finitely generated and torsion, whence $\cal D$-reflexive, and $N$ is $\cal D$-reflexive. \qedhere 
\end{proof}

Within the setting and notation of the previous proposition, we see that the asymmetry between the ring structure of $R$ and $S$ also affects the $\cal D$-reflexive right~$R\mathchar`-$ and left $S$-modules.

\begin{prop}
Let ${}_SU_R$ be a cotilting bimodule.
\begin{enumerate}
\item[(i)] If the class of $\cal D$-reflexive right $R$-modules coincides with the subcategory of finitely generated right $R$-modules, then $R$ is right noetherian.

\item[(ii)] If both the classes of $\cal D$-reflexive modules coincide with the corresponding classes of finitely generated modules, then $U_R$ and ${}_SU$ are $\Sigma$-pure-injective.
\end{enumerate}
\end{prop}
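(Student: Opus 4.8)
My plan is to derive (i) from the thickness of the $\cal D$-reflexive modules, and then to bootstrap (ii) out of (i) together with the finiteness of $U$ on both sides. For (i): by \cite[Theorem~3.10]{MT10} the class $\mathcal{M}_R$ of $\cal D$-reflexive right $R$-modules is an exact abelian subcategory of $\rMod R$, so under the hypothesis $\mathcal{M}_R=\rmod R$ the category $\rmod R$ is closed under kernels of morphisms between finitely generated modules. Hence, given a submodule $A\le B$ with $B$ finitely generated, $B/A$ is finitely generated and $A=\ker(B\to B/A)$ lies in $\mathcal{M}_R=\rmod R$; so every submodule of a finitely generated right $R$-module is finitely generated, i.e.\ $R$ is right noetherian.

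For (ii), I would first apply (i) to $R$ and, via the opposite ring, to the left $S$-module side: the two hypotheses then yield that $R$ is right noetherian and $S$ is left noetherian. Next, $U$ is $\cal D$-reflexive on both sides: by Proposition~\ref{p:DReflexiveModules} and faithful balance, $\Delta\Gamma(U)=\Hom(\Ext^1(U,U),U)=0$ (as $\Ext^1(U,U)=0$) while $\omega_U\colon U\to\Delta^{2}(U)\cong\Hom_S(S,U)\cong U$ is an isomorphism, so $U_R\in\mathcal{M}_R$ and ${}_SU\in{}_S\mathcal{M}$ (alternatively $\mathbf{R}\Delta_S({}_SS)\cong U_R$ and ${}_SS$ is $\cal D$-reflexive, being finitely generated). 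By the standing hypotheses $U_R$ and ${}_SU$ are then finitely generated, hence noetherian, modules; in particular ${}_SU$ is noetherian over $S=\End(U_R)$ and $U_R$ is noetherian over $R=\End({}_SU)$.

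The remaining --- and main --- task is to upgrade ``noetherian'' to ``finite length'' for $U_R$ and ${}_SU$. Once this is done, $U_R$ and ${}_SU$ are $\Sigma$-pure-injective: a module of finite length is artinian, hence has the descending chain condition on all its submodules, a fortiori on its pp-definable (matrix) subgroups, and by the Gruson--Jensen--Zimmermann criterion this characterises $\Sigma$-pure-injectivity. To get the finite length statement --- equivalently, that $R$ is right artinian and $S$ is left artinian, which here are equivalent by the previous Proposition together with Hopkins--Levitzki and the fact that endomorphism rings of finite length modules are semiprimary --- I would argue by chain conditions through the dualities of the Cotilting theorem \cite[Theorem~6]{Col98}. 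Since finitely generated torsionfree modules are reflexive (cf.\ \cite[Lemma~1.1]{MT04} and Lemma~\ref{l:TorsionPair}) and $\Delta$ restricts to a duality between the finitely generated torsionfree right $R$- and left $S$-modules, a descending chain $M_1\supseteq M_2\supseteq\cdots$ of submodules of $U_R$ with torsionfree consecutive factors is sent to a chain of epimorphisms $\Delta(M_1)\twoheadrightarrow\Delta(M_2)\twoheadrightarrow\cdots$ whose kernels ascend inside the noetherian module $\Delta(M_1)$, hence stabilise; applying $\Delta^{2}$ and reflexivity, the original chain stabilises. A general descending chain is reduced to this case by first passing to torsionfree saturations (stable by the previous step) and then controlling the remaining torsion factors via the $\Gamma$-duality between torsion reflexive left $S$- and right $R$-modules, once more using noetherianity of the opposite ring. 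I expect the genuine difficulty to lie precisely in this last reduction --- handling the torsion factors --- which is where the hypothesis ${}_S\mathcal{M}=S\lmod$ (and not merely $\mathcal{M}_R=\rmod R$) is indispensable. Carrying it out shows $U_R$ is artinian, and by symmetry so is ${}_SU$.
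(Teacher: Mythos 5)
Your part (i) is correct and is essentially the paper's own argument (thickness of the $\De$-reflexive modules applied to $0\to K\to R_R\to R/K\to 0$). Part (ii), however, has a genuine gap. You propose to first upgrade noetherianity of $U_R$ and ${}_SU$ to finite length---equivalently, to prove that $R$ is right artinian and $S$ is left artinian---and only then deduce $\Sigma$-pure-injectivity. But that artinianity statement is precisely Theorem~\ref{t:CotiltingBimoduleBothArtinianRings}, whose proof in the paper \emph{uses} the present proposition as input: there, $\Sigma$-pure-injectivity together with noetherianity of $U$ over its endomorphism ring yields endofiniteness by \cite[Lemma~4.3]{KS97}, and artinianity of the rings follows. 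Your proposal inverts this order, and the one step that would make the inversion work---controlling the torsion factors of a descending chain of submodules of $U_R$---is exactly the step you leave open (``I expect the genuine difficulty to lie precisely in this last reduction''). As written, part (ii) is therefore not a proof. The paper instead argues directly from noetherianity: by \cite[Theorem~6.3]{CMT10} it suffices to verify the Reiten--Ringel condition for ${}^{\bot_1}U_R$, i.e.\ that in a short exact sequence $0\to Y_0\to Y\to X\to0$ with $Y_0$ finitely generated, $Y_0,Y$ torsionfree and $X$ torsion, the module $Y$ is finitely generated; this is done by showing that $\Gamma(X)$ is a finitely generated (hence $\De$-reflexive) quotient of $\Delta(Y_0)$ and that $X$ embeds into the finitely generated module $\Gamma^2(X)$, whence $X$, and then $Y$, are finitely generated by noetherianity.

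A secondary but real error: even granting finite length of $U_R$ over $R$, your justification of $\Sigma$-pure-injectivity fails. The pp-definable subgroups of $U_R$ are $\End_R(U)$-submodules of $U$, not $R$-submodules, so the descending chain condition on $R$-submodules does not pass ``a fortiori'' to them; finite length over the base ring does not imply $\Sigma$-pure-injectivity in general. What would rescue the conclusion is that you are aiming at finite length on \emph{both} sides, i.e.\ at endofiniteness of $U$ (finite length over $S=\End_R(U)$), which does imply $\Sigma$-pure-injectivity---but this only shifts the weight back onto the unproven artinianity step.
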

\begin{proof}
\noindent(i) Let $0\to K\to R_R\to M\to0$ be a short exact sequence in $\rMod R$. Since $R_R$ and $M$ are both $\cal D$-reflexive, then also $K$ is, hence it is finitely generated by hypothesis.

\noindent(ii) Since by part~(i) the ring $R$ is right noetherian, by \cite[Theorem~6.3]{CMT10} in order to prove that $U_R$ is $\Sigma$-pure-injective it suffices to show that \cite[Condition~5.1]{CMT10} holds true for the torsionfree class ${}^{\bot_1}U_R$. Hence, let $0\to Y_0\to Y\to X\to0$ be a short exact sequence in $\rMod R$, where $Y_0,Y\in{}^{\bot_1}U$ and $Y_0$ is finitely generated, and $X\in{}^{\bot_0}U$. By hypothesis and by \cite[Lemma~3.1\,(1,a)]{MT14}, $\Delta(Y_0)$ is finitely generated hence a $\cal D$-reflexive left $S$-module. Thus, $\Delta\Gamma(X)=0$ and $H^0(\what\omega_X)\colon X\to\Gamma^2(X)$ is a monomorphism, in view of the following commutative diagram with exact rows,
\[
\xymatrix{%
	& 0 \ar[r] & Y_0 \ar[r]\ar[d]_\cong &
		Y \ar[r]\ar@{>->}[d] & X \ar[r]\ar[d]^-{H^0(\what\omega_X)} & 0 \\
	0 \ar[r] & \Delta\Gamma(X) \ar[r] & \Delta^{2}(Y_0) \ar[r] & \Delta^{2}(Y) \ar[r] &
		\Gamma^2(X) \ar[r] & 0
}	
\]
Moreover, since $\Delta(Y_0)\to\Gamma(X)$ is an epimorphism, also $\Gamma(X)$ is finitely generated hence $\cal D$-reflexive, and torsion. Since $\mathbf{R}\Delta(\what\omega_{\Gamma(X)})\circ\what\omega_{\mathbf{R}\Delta(\Gamma(X))}=\id_{\mathbf{R}\Delta(\Gamma(X))}$, also $\mathbf{R}\Delta(\Gamma(X))=\Gamma^2(X)[-1]$ is $\cal D$-reflexive. Therefore, since $X\lhook\joinrel\to\Gamma^2(X)$ and $R$ is right noetherian, $X$ is finitely generated, i.e.\ $\cal D$-reflexive, so by extension-closure also $Y$ is finitely generated. Same argument for ${}_SU$. \qedhere
\end{proof}

We are now in a position to prove the necessity of both the rings being one-sided artinian for the relevant $\De$-reflexive modules to be the finitely generated ones, hence to ensure that a fundamental feature of Azumaya duality is preserved in any cotilting duality.

\begin{thm}
Let ${}_SU_R$ be a cotilting bimodule. The following are equivalent:%
\label{t:CotiltingBimoduleBothArtinianRings}%
\begin{enumerate}
\item[(a)] $R$ is right artinian and $S$ is left artinian;

\item[(b)] the $\cal D$-reflexive modules either over $R$ and $S$ are the finitely generated ones.
\end{enumerate}

\noindent If one of these conditions holds true, then $U_R$ and ${}_SU$ are product~complete.
\end{thm}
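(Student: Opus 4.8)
The plan is to prove the two implications separately and then read off product completeness.

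\emph{Implication $(a)\Rightarrow(b)$.} Here I would simply invoke Corollary~\ref{c:CotiltingBimoduleOneSidedArtinian} on each side. On the right it applies directly to the right artinian ring $R$ and the cotilting bimodule ${}_SU_R$, giving that the $\De$-reflexive right $R$-modules are exactly the finitely generated ones. For the left side one passes to opposite rings: ${}_SU_R$ becomes a cotilting bimodule ${}_{R^{\rm op}}U_{S^{\rm op}}$ (faithful balance and one-sided $1$-cotiltingness being left--right symmetric), whose associated cotilting duality is just the adjunction $(\mathbf{R}\Delta_S,\mathbf{R}\Delta_R)$ read backwards, so that its $\De$-reflexive objects coincide with ours; since $S$ is left artinian precisely when $S^{\rm op}$ is right artinian, Corollary~\ref{c:CotiltingBimoduleOneSidedArtinian} applies once more and yields that the $\De$-reflexive left $S$-modules are the finitely generated ones.

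\emph{Implication $(b)\Rightarrow(a)$.} I would argue in three moves. First, since $R_R$ is finitely generated it is $\De$-reflexive by $(b)$, and being projective it is $\Delta_R$-acyclic, so $\mathbf{R}\Delta_R(R_R)\cong{}_SU$ is a stalk complex lying in the image of the cotilting duality $\mathbf{R}\Delta_R\colon\De_R\rightleftarrows\De_{S^{\rm op}}$; hence ${}_SU$ is a $\De$-reflexive left $S$-module, thus finitely generated by $(b)$, and symmetrically $U_R$ is finitely generated. Second, part~(i) of the preceding proposition, applied on each side through the mirror bimodule above, gives that $R$ is right noetherian and $S$ is left noetherian, whence ${}_SU$ is noetherian over $S$ and $U_R$ is noetherian over $R$; and part~(ii) of that proposition gives that $U_R$ and ${}_SU$ are $\Sigma$-pure-injective. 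Third, I would apply the endofiniteness criterion of Remark~\ref{r:FinitenessConditions} — a module is endofinite iff it is $\Sigma$-pure-injective and noetherian over its endomorphism ring — to $U_R$ (endomorphism ring $S$) and to ${}_SU$ (endomorphism ring $R$, because ${}_SU_R$ is faithfully balanced), concluding that $U_R$ has finite length over $S$ and ${}_SU$ has finite length over $R$, i.e.\ that both ${}_SU$ and $U_R$ are of finite length. Then, since $U_R$ has finite length its endomorphism ring $S$ is semiprimary (as in the proof of Proposition~\ref{p:ProductComplete}), and a semiprimary left noetherian ring is left artinian — filter $S$ by the powers of its nilpotent Jacobson radical $J$ and note that each factor, being finitely generated over the semisimple ring $S/J$, has finite length; symmetrically $R=\End_S({}_SU)$ is semiprimary, hence right artinian as it is right noetherian. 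This gives $(a)$.

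\emph{Product completeness and main obstacle.} Once $(a)$ holds, Proposition~\ref{p:ProductComplete} gives that $U_R$ is product complete and, applied to ${}_{R^{\rm op}}U_{S^{\rm op}}$ over the right artinian ring $S^{\rm op}$, that ${}_SU$ is product complete (the argument for $(b)\Rightarrow(a)$ in fact already exhibits $U_R$ and ${}_SU$ as endofinite, hence product complete by Remark~\ref{r:FinitenessConditions}). With the quoted results at hand the individual steps are short, so I do not anticipate a deep obstacle; the points that require care are the bookkeeping of the mirror bimodule ${}_{R^{\rm op}}U_{S^{\rm op}}$, so that the one-sided statements can be transported to the $S$-side, and the careful threading of the endofiniteness criterion through both sides at once — there the ``noetherian over the endomorphism ring'' hypothesis for one module is really a condition on the \emph{other} ring, so that both halves of $(b)$ are genuinely needed to upgrade ``noetherian'' to ``finite length'' on each side.
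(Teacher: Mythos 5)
Your proposal is correct and follows essentially the same route as the paper: (a)$\Rightarrow$(b) by the one-sided result applied on each side, and (b)$\Rightarrow$(a) via the preceding proposition (noetherianness of both rings and $\Sigma$-pure-injectivity of $U$ on both sides) combined with the endofiniteness criterion of Remark~\ref{r:FinitenessConditions} to get finite length, with product completeness then coming from endofiniteness. The only minor deviation is the final inference of artinianness: the paper embeds ${}_SS\hookrightarrow{}_SU^n$ into a finite-length module, while you use that $S$ is semiprimary and left noetherian (Hopkins--Levitzki style); both are fine.
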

\begin{proof}
\noindent``${\rm(a)\Rightarrow(b)}$'' It follows by \cite[Proposition~3.6]{MT14}.

\noindent``${\rm(b)\Rightarrow(a)}$'' By (the proof of) the previous proposition, both the rings $R$ and $S$ are noetherian and $U_R$ and ${}_SU$ are $\Sigma$-pure-injective. Moreover, since $U_R$ and ${}_SU$ are finitely generated being $\cal D$-reflexive, they are noetherian modules. By \cite[Lemma~4.3]{KS97}, since $U_R$ and ${}_SU$ are $\Sigma$-pure-injective and noetherian over their endomorphism ring, they are endofinite. In particular, ${}_SU$ and $U_R$ have finite length. Now, applying $\Delta_R$ on the epimorphism $R^n\to U_R\to0$ ($n\in\N$) we obtain the monomorphism ${}_SS\lhook\joinrel\to{}_SU^n$, hence $S$ is left artinian being finitely $U$-cogenerated. Analogously one proves that $R$ is right artinian.

Finally, since $U_R$ and ${}_SU$ are of finite length and $\Sigma$-pure injective, again by \cite[Lemma~4.3]{KS97} they are endofinite hence product~complete. \qedhere
\end{proof}

Summing up the results we proved so far on a cotilting duality over two artinian rings, we have the following:

\begin{cor}
Let $R$ be a right artinian ring, $S$ a left artinian ring, and ${}_SU_R$ a cotilting bimodule. Then there is a duality%
\label{c:CotiltingBimoduleBothArtinianRings}
\[
	\mathbf{R}\Delta_S:\De^b_{S\lmod}(S^\textup{op})\longrightleftarrows
		\De^b_{\rmod R}(R):\mathbf{R}\Delta_R.
\]
\end{cor}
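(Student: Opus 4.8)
The plan is to assemble the duality from the pieces already in place rather than re-deriving anything from scratch. By Remark~\ref{r:CotiltingBimodules}, a cotilting bimodule ${}_SU_R$ satisfies the hypotheses of \cite[Corollary~3.6]{MT10}, so the cotilting duality
\[
	\mathbf{R}\Delta_S:\De^b(S^{\rm op})\longrightleftarrows\De^b(R):\mathbf{R}\Delta_R
\]
restricts to an equivalence between the full triangulated subcategories $\De_{S^{\rm op}}$ and $\De_R$ of $\De$-reflexive complexes, and a complex is $\De$-reflexive precisely when its cohomology modules are $\De$-reflexive. Hence $\De_R\cong\De^b_{\mathcal{M}_R}(R)$ and $\De_{S^{\rm op}}\cong\De^b_{{}_S\mathcal{M}}(S^{\rm op})$, as recorded in the introduction.

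The work therefore reduces to identifying the two classes of $\De$-reflexive modules. First I would invoke Corollary~\ref{c:CotiltingBimoduleOneSidedArtinian}: since $R$ is right artinian, $\mathcal{M}_R=\rmod R$. For the left-hand side, I would apply the implication ${\rm(a)\Rightarrow(b)}$ of Theorem~\ref{t:CotiltingBimoduleBothArtinianRings}: because $R$ is right artinian and $S$ is left artinian, the $\De$-reflexive left $S$-modules coincide with the finitely generated ones, i.e.\ ${}_S\mathcal{M}=S\lmod$. Substituting these identifications into the equivalences of the previous paragraph yields
\[
	\De_R\cong\De^b_{\rmod R}(R)\quad\text{and}\quad
	\De_{S^{\rm op}}\cong\De^b_{S\lmod}(S^{\rm op}),
\]
so that restricting $(\mathbf{R}\Delta_S,\mathbf{R}\Delta_R)$ to $\De$-reflexive complexes gives exactly the asserted duality
\[
	\mathbf{R}\Delta_S:\De^b_{S\lmod}(S^\textup{op})\longrightleftarrows
		\De^b_{\rmod R}(R):\mathbf{R}\Delta_R.
\]

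One point to check is that $\De^b_{\rmod R}(R)$ and $\De^b_{S\lmod}(S^{\rm op})$ are genuinely triangulated subcategories of $\De^b(R)$ and $\De^b(S^{\rm op})$ on which the derived functors act — equivalently, that $\rmod R$ and $S\lmod$ are exact abelian (thick) subcategories. This is immediate here: $R$ being right artinian and $S$ being left artinian, both rings are in particular noetherian on the relevant side, so finitely generated modules form a Serre subcategory and $\De^b_{\rmod R}(R)\simeq\De^b(\rmod R)$, $\De^b_{S\lmod}(S^{\rm op})\simeq\De^b(S\lmod)$. No real obstacle arises; the only subtlety, and the reason the statement needs the earlier machinery rather than being a triviality, is the left artinian hypothesis on $S$, which is exactly what makes ${}_S\mathcal{M}=S\lmod$ hold via Theorem~\ref{t:CotiltingBimoduleBothArtinianRings} — without it, the natural shape of the cotilting duality would involve the possibly larger class of $\De$-reflexive left $S$-modules.
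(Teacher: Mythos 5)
Your proposal is correct and matches the paper's intent exactly: the corollary is stated there as a direct summing-up of Theorem~\ref{t:CotiltingBimoduleBothArtinianRings}\,(a)$\Rightarrow$(b) together with the identification of $\De$-reflexive complexes as those with $\De$-reflexive cohomology, and your closing remark about $\rmod R$ and $S\lmod$ being exact abelian subcategories is precisely the point the paper makes immediately after the corollary. The only cosmetic redundancy is that your separate appeal to Corollary~\ref{c:CotiltingBimoduleOneSidedArtinian} for $\mathcal{M}_R=\rmod R$ is already contained in the implication (a)$\Rightarrow$(b) of the theorem.
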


Altogether, we get a generalisation of Azumaya's Theorem~\ref{t:Azumaya1}. Recall that, if $R$ and $S$ are one-sided artinian as in the previous result, then $\rmod R$ and $S\lmod$ are exact abelian subcategories of $\rMod R$ and $S\lMod$, respectively, and $\De^b_{\rmod R}(R)$ and $\De^b_{S\lmod}(S^\textup{op})$ are triangle equivalent to $\De^b_{\vphantom R}(\rmod R)$ and $\De^b_{\vphantom S}(S\lmod)$, respectively (see \cite[Proposition~4.8]{Har66}).

Let us pass to discuss the representability of a derived duality by a cotilting bimodule, namely how to generalise Azumaya's Theorem~\ref{t:Azumaya2}. Assume that $F:S\lMod\leftrightarrows\rMod R:G$ is a right adjoint pair of contravariant functors, and consider the right adjunction
\[
	\mathbf{R}F:\De^b(S^{\textup{op}})\longrightleftarrows
	\De^b(R):\mathbf{R}G.
\]
Within this generality, the resulting ($\De$-)reflexive complexes are those $M^\chb\in\De^b(R)$ and $N^\chb\in\De^b(S^\textup{op})$ such that the maps $M^\chb\to\mathbf{R}F\circ\mathbf{R}G(M^\chb)$ and $N^\chb\to\mathbf{R}G\circ\mathbf{R}F(N^\chb)$ are quasi-isomorphisms.
In order to generalise Theorem~\ref{t:Azumaya2}, it is natural to ask when the displayed derived adjoint pair induces a duality either between the bounded derived subcategories of complexes with finitely generated cohomologies,
\[
	\De^b_{S\lmod}(S^{\textup{op}})\longrightleftarrows
	\De^b_{\rmod R}(R),
\]
or between the bounded derived categories of the finitely generated modules,
\[
	\De^b(S\lmod)\longrightleftarrows
	\De^b(\rmod R).
\]
The previous four derived subcategories suffer existential issues; nonetheless, in the spirit of Theorem~\ref{t:Azumaya2} this shall be thought as the analogue of $\rmod R$ and $S\lmod$ being not thick subcategories in general. In this regard, on the one hand \cite[Corollary~3.6]{MT10} ensures that if $F$ and $G$ have cohomological dimension at most~$1$ and map projectives into $G$-acyclic and $F$-acyclic objects, respectively, then the $\De$-reflexive complexes are precisely the bounded complexes with $\De$-reflexive cohomology (notice that the first assumption makes the second equivalent to $\mathbf{R}G\circ\mathbf{R}F\cong \mathbf{L}(GF)$ and $\mathbf{R}F\circ\mathbf{R}G\cong \mathbf{L}(FG)$ naturally, see \cite[Proposition~5.4]{Har66}). Since the derived categories $\De^b_{\mathcal{M}_R}(R)$ and $\De^b_{{}_S\mathcal{M}}(S^\textup{op})$ are well defined, the duality we shall represent does appear as in the former form. On the other hand, Corollary~\ref{c:CotiltingBimoduleBothArtinianRings} suggests that these hypotheses must be taken into account, in order to get, besides representability, one-sided artinian rings. With these premises, we have the following:

\begin{thm}
Let $F:S\lMod\rightleftarrows\rMod R:G$ be two additive contravariant functors such that
\[
	\mathbf{R}F:\De^b_{S\lmod}(S^{\textup{op}})\longrightleftarrows
	\De^b_{\rmod R}(R):\mathbf{R}G
\]
is a duality. Assume that the cohomological dimension of $F$ and $G$ is at most~$1$ and that $F$ and $G$ map projectives into $G$-acyclic and $F$-acyclic objects, respectively. Then%
\label{t:Representability}%
\begin{enumerate}
\item[(i)] The duality is representable by a faithfully balanced bimodule ${}_SU_R$ which is finitely generated on both sides;

\item[(ii)] ${}_SU_R$ is a cotilting bimodule;

\item[(iii)] $R$ is right artinian and $S$ is left artinian.

\item[(iv)] The $\cal D$-reflexive modules coincide with the finitely generated ones.
\end{enumerate}
\end{thm}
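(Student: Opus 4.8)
The plan is to recover a representing bimodule $U$ in the style of classical Morita theory, prove it is a cotilting bimodule, and then run the criteria of Section~\ref{S:DReflexiveModules} backwards. A preliminary remark used throughout: by hypotheses (1)--(2), \cite[Corollary~3.6]{MT10} applies, so a bounded complex is $\De$-reflexive (for the adjunction $(\mathbf{R}F,\mathbf{R}G)$) exactly when all its cohomology modules are. Since $\mathbf{R}F,\mathbf{R}G$ restrict to a duality, every object of $\De^b_{\rmod R}(R)$ and of $\De^b_{S\lmod}(S^{\textup{op}})$ is $\De$-reflexive, whence $\rmod R\subseteq\mathcal{M}_R$ and $S\lmod\subseteq{}_S\mathcal{M}$, where $\mathcal{M}_R,{}_S\mathcal{M}$ are the exact abelian subcategories (\cite[Theorem~3.10]{MT10}) of $\De$-reflexive modules.

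\emph{Representability \textup{(i)}.} I would put $U\leqdef F({}_SS)$, which becomes a bimodule ${}_SU_R$ by letting $s\in S$ act as $F(\rho_s)$, where $\rho_s$ is right multiplication on ${}_SS$. Since ${}_SS$ is projective, $\mathbf{R}F({}_SS)$ is the stalk complex $U$, an object of $\De^b_{\rmod R}(R)$; thus $U_R$ is finitely generated. The assignment $m\mapsto F(\mu_m)$, where $\mu_m\colon{}_SS\to M$ sends $s$ to $sm$, gives a natural transformation $\theta\colon F\Rightarrow\Hom_S(-,U)=\Delta_S$ with $\theta_{{}_SS}=\id$; as $F$ is a left adjoint of $G$ it is left exact and sends coproducts to products, so $\theta$ is invertible on all projective --- hence on all finitely presented --- left $S$-modules, $R^iF\cong\Ext^i_S(-,U)$, and $\mathbf{R}F\cong\mathbf{R}\Delta_S$ on $\De^b(S^{\textup{op}})$. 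Symmetrically $G\cong\Delta_R$, $\mathbf{R}G\cong\mathbf{R}\Delta_R$, and the bimodule $G(R_R)$ is identified with ${}_SU$ by evaluating $\mathbf{R}F\mathbf{R}G\cong\id\cong\mathbf{R}G\mathbf{R}F$ at $R_R$ and ${}_SS$. Finally, applying the duality to the endomorphisms of the stalk complexes ${}_SS$ and $R_R$ yields ring isomorphisms $S\cong\End_{\De^b(R)}(\mathbf{R}F({}_SS))=\End_R(U_R)$ and $R\cong\End_S({}_SU)$; so ${}_SU_R$ is faithfully balanced and finitely generated on both sides.

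\emph{${}_SU_R$ is a cotilting bimodule \textup{(ii)}.} From $R^iG\cong\Ext^i_R(-,U)$ and the bound on the cohomological dimension of $G$ one gets $\injdim U_R\le1$, and dually $\injdim{}_SU\le1$. Moreover $\mathbf{R}\Delta_R(U)\cong\mathbf{R}G\mathbf{R}F({}_SS)\cong{}_SS$ is a stalk complex, so $\Ext^1_R(U,U)=0$, and likewise $\Ext^1_S(U,U)=0$. For the remaining torsion-pair conditions the key point is that a finitely generated $M\in{}^{\bot_0}U\cap{}^{\bot_1}U$ has $\mathbf{R}\Delta_R(M)=0$, hence $M\cong\mathbf{L}\Delta^{2}(M)=0$ by $\De$-reflexivity; the passage to arbitrary modules and the verification of $\Ext^1_R(U^\alpha,U)=0$ would be carried out as in Step~two and Lemma~\ref{l:TorsionPair} of the proof of Theorem~\ref{t:ExistenceCotiltingDuality} (via \cite{MT04} and the pure-injectivity of $U$). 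Thus $U_R$ and ${}_SU$ are $1$-cotilting and ${}_SU_R$ is a cotilting bimodule; in particular $\Delta_R,\Delta_S$ send finitely generated modules to finitely presented ones by \cite[Propositions~1.2, 2.2]{AH00}.

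\emph{Artinianity \textup{(iii)} and $\De$-reflexive modules \textup{(iv)}.} Being finitely generated and torsionfree, $R_R$ is reflexive and so embeds in a finite power of $U_R$; likewise ${}_SS$ embeds in a finite power of ${}_SU$. Hence it suffices to show $U_R$ and ${}_SU$ have finite length: then $R$ is right artinian and $S$ is left artinian --- this is (iii) --- and Corollary~\ref{c:CotiltingBimoduleOneSidedArtinian} together with its left analogue give $\mathcal{M}_R=\rmod R$ and ${}_S\mathcal{M}=S\lmod$, which is (iv). \emph{The finite-length claim is where I expect the main difficulty.} One would like to imitate the Proposition preceding Theorem~\ref{t:CotiltingBimoduleBothArtinianRings} and Theorem~\ref{t:CotiltingBimoduleBothArtinianRings} --- deducing that $R,S$ are noetherian and that $U_R,{}_SU$ are $\Sigma$-pure-injective, hence endofinite by \cite[Lemma~4.3]{KS97}, hence of finite length --- but those results are stated under the \emph{equalities} $\mathcal{M}_R=\rmod R$, ${}_S\mathcal{M}=S\lmod$, whereas here only the inclusions $\rmod R\subseteq\mathcal{M}_R$, $S\lmod\subseteq{}_S\mathcal{M}$ are at hand. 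So the crux will be to extract the noetherianity of $R,S$ and the $\Sigma$-pure-injectivity of $U_R,{}_SU$ from these one-sided inclusions (together with the finite generation of $U$ on both sides, which is already available), thereby breaking the circularity between ``$\De$-reflexive $=$ finitely generated'' and ``$R,S$ noetherian''; once that is done the endofiniteness argument applies as in Section~\ref{S:DReflexiveModules} and the theorem follows.
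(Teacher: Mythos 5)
Your overall strategy coincides with the paper's (recover $U$ as $F({}_SS)$, prove it is cotilting, then deduce artinianity from finiteness properties of $U$), and part (i) is essentially correct. The gaps are in (ii) and (iii). In (ii) you verify $\Ext^1(U,U)=0$ but defer $\Ext^1_R(U^\alpha,U)=0$ for arbitrary cardinals $\alpha$ to ``Step two of Theorem~\ref{t:ExistenceCotiltingDuality} and the pure-injectivity of $U$''; neither is available at this point, since pure-injectivity of $U$ would follow from $U$ being cotilting (the very thing being proved) and Step~two is carried out under the standing hypotheses that $R$ is right artinian and $U_R$ is product complete. The intended argument is a direct use of hypothesis~(2): $U^\alpha\cong F(S^{(\alpha)})$ is the image of a projective, hence $G$-acyclic, i.e.\ $\Gamma_{\!R}(U^\alpha)=0$. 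More seriously, your reduction of ${}^{\bot_0}U\cap{}^{\bot_1}U=0$ to the finitely generated case does not go through: ${}^{\bot_0}U$ is not closed under finitely generated submodules, so no direct-limit argument applies, and the quoted Step~two and Lemma~\ref{l:TorsionPair} again live in the artinian/product-complete setting. The paper's actual argument here is the heart of (ii): for $0\ne M\in{}^{\bot_0}U\cap{}^{\bot_1}U$ and a nonzero finitely generated $M'\le M$ with $M''=M/M'$, the vanishing of $\mathbf{R}\Delta(M)$ gives $\Delta(M')\cong\Gamma(M'')$, whence $\Ext^1_S(\Gamma(M''),U^\alpha)=0$, the sequence $0\to U^\alpha\to\Delta(K)\to\Gamma(M'')\to0$ splits, and the counit $\what\omega_{M''}$ is null-homotopic --- contradicting $\mathbf{R}\Delta(\what\omega_{M''})\circ\what\omega_{\mathbf{R}\Delta(M'')}=\id$.

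For (iii) you have correctly isolated the two missing ingredients --- noetherianity of $R,S$ and $\Sigma$-pure-injectivity of $U_R,{}_SU$ --- but you do not supply them, so the proof is incomplete precisely where you flag the difficulty. For the record, the paper obtains noetherianity before everything else, from the one-sided inclusions you already have: for a right ideal $I\le R$, the triangle $I\to R_R\to R/I\buildrel+\over\to$ shows $I$ is $\De$-reflexive, so $\mathbf{R}G(I)$ lies in $\De^b_{S\lmod}(S^{\textup{op}})$ and $I\cong R^0F(G(I))$ is finitely generated. $\Sigma$-pure-injectivity then follows from \cite[Theorem~6.3]{CMT10} by a direct check of the Reiten--Ringel condition: if $0\to Y_0\to N\to X\to0$ is exact with $Y_0$ finitely generated, $N\in\Cogen U$ and $\Delta(X)=0$, then $\Delta(N)$ embeds in the finitely generated module $\Delta(Y_0)$, so $\Delta^{2}(N)$ is finitely generated and $N\hookrightarrow\Delta^{2}(N)$ forces $N$ to be finitely generated. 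With these two inputs your endofiniteness argument does close (iii), and (iv) then follows from Theorem~\ref{t:CotiltingBimoduleBothArtinianRings} as you say.
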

\begin{proof}
First notice that $R$ turns out to be right noetherian and $S$ left noetherian. Indeed, let $I\le R$ be a right ideal of $R$. By assumption, any finitely generated right $R$-module is $\cal D$-reflexive, so in the exact triangle $I\to R_R\to \smash[t]{R/I\buildrel+\over\to}$ of $\De^b(R)$ both $R_R$ and $R/I$ are finitely generated, hence $\cal D$-reflexive, so that $I$ is $\cal D$-reflexive as well. In turn, we obtain that $\mathbf{R}G(I)$ belongs to $\De^b_{S\lmod}(S^\textup{op})$, and therefore $\mathbf{R}FG(I)\cong I$ belongs to $\De^b_{\rmod R}(R)$; that is, $I=R^0F(G(I))$ is finitely generated (as usual, $R^\ast F$ is a shorthand for $H^\ast\mathbf{R}F$). Similar argument to show that $S$ is left noetherian. Thus, the categories $\De^b_{\rmod R}(R)$ and $\De^b_{S\lmod}(S^{\textup{op}})$ are triangulated and equivalent respectively to $\De^b(\rmod R)$ and $\De^b(S\lmod)$ \cite[Proposition~4.8]{Har66}. From now on we will deal with these latter derived categories.

\noindent(i) Since $R_R$ and ${}_SS$ are projective, their stalks are acyclic with respect to any additive functor, hence $R^nG(R_R)=0=R^nF({}_SS)$ for all $n\ne0$; that is, $\mathbf{R}G(R_R)$ and $\mathbf{R}F({}_SS)$ are stalk complexes concentrated in degree zero. Set ${}_SV=\mathbf{R}G(R_R)$ and $U_R=\mathbf{R}F({}_SS)$; notice that by hypothesis $U_R$ and ${}_SV$ are finitely generated. As usual, $U_R$ admits a natural structure of left $S$-module compatible with the $R$-module structure; analogously, $V$ is a $S$-$R$-bimodule. Let us show that the bimodules ${}_SU_R$ and ${}_SV_R$ are isomorphic. By the (right) adjunction of $\mathbf{R}F$ and $\mathbf{R}G$, for every $N^\chb\in\De^b(S\lmod)$ and every $M^\chb\in\De^b(\rmod R)$ we have
\[
	\Hom_{\De^b(\rmod R)}(M^\chb,\mathbf{R}F(N^\chb))\cong
	\Hom_{\De^b(S\lmod)}(N^\chb,\mathbf{R}G(M^\chb)),
\]
so the claim readily follows by choosing $M^\chb=R_R$ and $N^\chb={}_SS$ \cite{Ver96}. Moreover, the bimodule ${}_SU_R$ is faithfully balanced since
\begin{align*}
	R\cong \Hom_R(R,R) &=\Hom_{\De^b(\rmod R)}(R,R) \cr
	&\cong \Hom_{\De^b(\rmod R)}(R,\mathbf{R}F\circ\mathbf{R}G(R_R)) \cr
	&\cong \Hom_{\De^b(S\lmod)}({}_SU,\mathbf{R}G(R)) \cr
	&= \End_{\De^b(S\lmod)}(U)=\End_S(U),
\end{align*}
and analogously for the other ring isomorphism. Now, let us prove that the duality is representable by ${}_SU_R$; that is, there are natural isomorphisms $\mathbf{R}F\cong\mathbf{R}\Delta_S$ and $\mathbf{R}G\cong\mathbf{R}\Delta_R$. For instance, for any $M^\chb\in\De^b(\rmod R)$, the complex $\mathbf{R}G(M^\chb)$ is computed as the complex $0\to G(P_0)\to G(P_1)\to G(P_2)\to\cdots$ for a homotopically projective resolution $P_\chb\to M^\chb$ formed by finitely generated projective terms, and for every $n\ge0$ we have
\begin{align*}
	G(P_n) &\cong \Hom_S(S,G(P_n))\cong \Hom_{\De^b(S\lmod)}(S,\mathbf{R}G(P_n)) \cr
	& \cong \Hom_{\De^b(\rmod R)}(\mathbf{R}F\circ\mathbf{R}G(P_n),\mathbf{R}F({}_SS))\cr
	& \cong \Hom_{\De^b(\rmod R)}(P_n,U_R)=\Delta_R(P_n)
\end{align*}
naturally; that is, the complex $\mathbf{R}G(M^\chb)$ is naturally isomorphic to $\Delta_R(P_\chb)$, i.e.~to $\mathbf{R}\Delta_R(M^\chb)$, as desired. A similar argument shows that $\mathbf{R}F\cong\mathbf{R}\Delta_S$ naturally.

\noindent(ii) Notice that the hypotheses on the functors $\Delta_R$ and $\Delta_S$ imply that the injective dimension both of ${}_SU$ and $U_R$ is at most~$1$. Moreover, for any cardinal $\alpha$, we have $U_R^{\alpha}=\Delta_S(S^{(\alpha)})$, so it is $\Delta_R$-acyclic, i.e.\ $\Gamma_{\!R}(U^\alpha)=0$. Similarly, $\Gamma_{\!S}(U^\alpha)=0$ for any cardinal $\alpha$. Thus, the bimodule $_SU_R$ is partial cotilting on both sides. To conclude that it is a cotilting bimodule, let us prove that ${}^{\bot_0}U\cap{}^{\bot_1}U=0$ either in $S\lMod$ and $\rMod R$. For instance, let $M\in{}^{\bot_0}U\cap{}^{\bot_1}U$ be a nonzero right $R$-module. Given a finitely generated submodule $M'\le M$; from the short exact sequence $0\to M'\to M\to M''\to0$ we get the triangle $\smash[t]{\mathbf{R}\Delta(M'')\to\mathbf{R}\Delta(M)\to\mathbf{R}\Delta(M')\buildrel+\over\to}$, hence
\[
	\Delta(M')=R^0\Delta(M')\cong R^1\Delta(M'')=\Gamma(M'')
\]
by hypothesis. Since $\Delta_R(M')\in{}^{\bot_1}U$, we obtain
\[
	0=\Ext^1_S(\Gamma(M''),U)^\alpha\cong\Ext^1_S(\Gamma(M''),U^\alpha)
\]
for every cardinal $\alpha$. This means that from the short exact sequence $0\to K\buildrel i\over\to R^{(\alpha)}\to M''\to0$ in $\rMod R$ we obtain the split exact sequence $0\to U^\alpha\to \Delta(K)\to\Gamma(M'')\to0$ in $S\lMod$, whence $\Delta^{2}(K)\cong \Delta^{2}(R^{(\alpha)})\oplus\Delta\Gamma(M'')$. Therefore, from these facts together with the hypothesis, we have that the morphism $\what\omega_{M''}$ is described by the following cochain map:
\[
\xymatrix{%
	M'' \ar[d]^{\what\omega_{M''}} \\
	\mathbf{R}\Delta^{2}(M'')
}
	\hskip1truecm
\xymatrix{%
	0 \ar[r] & K \ar[d]_{\omega_K} \ar[r]^i &
		R^{(\alpha)}\ar[d]^{\omega_{R^{(\alpha)}}} \ar[r] & 0 \\
	0 \ar[r] & \Delta^{2}(K) \ar[r]^{\Delta^{2}(i)} & \Delta^{2}(R^{(\alpha)}) \ar[r] & 0
}	
\]
and since $\Delta^{2}(i)$ is a split epimorphism, $\what\omega_{M''}$ is null~homotopic. In particular, $\mathbf{R}\Delta(\what\omega_{M''})=0$, contradiction to the adjunction formula $\mathbf{R}\Delta(\what\omega_{M''})\circ\what\omega_{\mathbf{R}\Delta(M'')}=\id_{\De^b(S^\textup{op})}$.

\noindent(iii)\enspace  First let us show that $_SU$ and $U_R$ are $\Sigma$-pure-injective. Since the rings are  neotherian, by \cite[Theorem 6.3]{CMT10} we just have to show that if $0\to Y_0\to N\to X\to 0$ is exact, with $Y_0$ finitely generated, $N\in  \Cogen U$ and $\Delta(X)=0$, then $N$ is finitely generated.
By the inclusion  $0\to \Delta(N)\to \Delta(Y_0)$, we have that  $\Delta(N)$ and hence $\Delta^2(N)$ are finitely generated. Since $N\lhook\joinrel\to \Delta^2(N)$, it is finitely generated as well. By Remark~\ref{r:FinitenessConditions}, ${}_SU$ and $U_R$ are endofinite. Since $S$ and  $R$ are finitely cogenerated by ${}_SU$ and $U_R$, respectively, they are left artinian and right artinian, respectively.

\noindent(iv)\enspace It follows at once by Theorem~\ref{t:CotiltingBimoduleBothArtinianRings}.
\end{proof}

\begin{rem}
If $F:S\lMod\rightleftarrows\rMod R:G$ are two additive contravariant functors such that
\[
	\mathbf{R}F:\De^b_{S\lmod}(S^\textup{op})\longrightleftarrows
	\De^b_{\rmod R}(R):\mathbf{R}G
\]
is a duality and their cohomological dimension is at most~$1$, then the first part of the previous proof still holds, so that the duality is representable by a faithfully bimodule ${}_SU_R$ which is finitely generated on both sides. Assume now $S$ right perfect and $R$ left perfect. By Remark~\ref{r:FinitenessConditions}, the finitely generated module $U_R$ is product complete since $S$ is left coherent and right perfect, and $U_R$ is finitely presented. Similar argument to get that ${}_SU$ is product complete and finitely presented. In particular, $\Gamma(U^{\alpha})=0$ since $U^{\alpha}\in \Add U$, for all cardinal $\alpha$, whence $F$ and $G$ map projectives into $G$-acyclic and $F$-acyclic objects, respectively. By the previous theorem, we infer that $R$ and $S$ are right and left artinian, respectively.
\end{rem}

\end{document}